
\documentclass[reqno]{amsart}
 
\usepackage{amsfonts, amsmath,amsthm, amssymb,url,graphicx, color, verbatim,lineno}

\newcommand\NN{\mathbb{N}}
\newcommand\ZZ{\mathbb{Z}}
\newcommand\RR{\mathbb{R}}
\newcommand\FF{\mathbb{F}}

\newcommand\Tor{\operatorname{Tor}}
\newcommand\Ext{\operatorname{Ext}}

\newcommand\Int{\operatorname{Int}}
\newcommand\Res{\operatorname{Res}}
\newcommand\intervals{\operatorname{int}}
\newcommand\link{\operatorname{link}}
\newcommand\red{{\operatorname{{red}}}}

\newcommand\gr{{\mathfrak{gr}}}
\newcommand\mm{{\mathfrak{m}}}

\title[Koszul incidence algebras, semigroups, ideals]{Koszul incidence algebras, affine semigroups, and
Stanley-Reisner ideals}

\author{Victor Reiner}
\address{School of Mathematics\\
University of Minnesota\\
Minneapolis, MN 55455\\
USA}
\email{reiner@math.umn.edu}

\author{Dumitru Ioan Stamate}
\address{Facultatea de Matematic\u{a} \c{s}i Informatic\u{a} \\
Universitatea Bucure\c{s}ti \\
Str. Academiei 14  \\
Bucharest, RO-010014  \\
Romania}
\email{dumitru.stamate@fmi.unibuc.ro}

\theoremstyle{plain}
\newtheorem{theorem}{Theorem}[section]
\newtheorem{corollary}[theorem]{Corollary}
\newtheorem{definition}[theorem]{Definition}
\newtheorem{example}[theorem]{Example}
\newtheorem{proposition}[theorem]{Proposition}
\newtheorem{lemma}[theorem]{Lemma}
\newtheorem{remark}[theorem]{Remark}

\newtheorem{axiom}{Axiom}

\thanks{First author partially supported by NSF grant DMS-0601010.
Second author partially supported by CNCSIS grant ID-PCE no. 51/2007 and a Fulbright Scholarship at the University of Minnesota.}

\keywords{Koszul, incidence algebra, affine semigroup, poset, nongraded, nonpure,
sequentially Cohen-Macaulay}

\subjclass{06A11, 05E25, 16S37, 16S36}

\begin{document}

\begin{abstract}
We prove a theorem unifying three results from combinatorial homological and commutative
algebra, characterizing the Koszul property for 
incidence algebras of posets and affine semigroup rings, and
characterizing linear resolutions of squarefree monomial ideals. 
The characterization in the graded setting is via
the Cohen-Macaulay property of certain posets or simplicial
complexes, and in the more general nongraded setting, via
the sequential Cohen-Macaulay property.  
\end{abstract}

\maketitle

\tableofcontents

\section{Introduction}
\label{sec:intro}

The main result of this paper, Theorem~\ref{thm:main},
is a characterization  of the Koszul property for certain rings and ideals
within them.
The notion of Koszulness goes back to work of 
Priddy \cite{Pri} and Fr\"oberg \cite{Fr1},
but has been generalized by various authors.  We begin by
reviewing the definition that we will use, incorporating
features introduced by Beilinson, Ginzburg and Soergel \cite{BGS} and 
Fr\"oberg \cite{Fr2}.

\begin{definition}(cf. \cite[Definitions 1.2.1, 2.14.1]{BGS}, \cite[\S 2]{Fr2})
\label{defn:koszulity}
\rm \ \\
Let $R = \bigoplus_{d \geq 0} R_d$ be an $\NN$-graded ring
in which the subring $R_0$ is {\it semisimple}, and
let $M$ be a $\ZZ$-graded (right-) $R$-module.
Say that $M$ is a {\it Koszul $R$-module} if
it admits a graded projective $R$-resolution
$$
\cdots \rightarrow P^{(2)} \rightarrow P^{(1)} \rightarrow P^{(0)} \rightarrow M \rightarrow 0
$$
with each $P^{(i)}$ generated in degree $i$, that is, $P^{(i)}= P^{(i)}_i R$. 
One can define a similar notion for a left $R$-module $M$.
Letting $\mm:=\oplus_{d > 0} R_d$, a two-sided ideal within $R$,
one says that $R$ is a {\it Koszul ring} if the trivially graded
$R$-module $R_0=R/\mm$ is Koszul as a right $R$-module.

More generally, assume $A$ is a ring which is {\it not necessarily
graded} but has a direct sum decomposition
\begin{equation}
\label{choice-of-direct-sum-decomposition}
A=A_0 \oplus I
\end{equation}
in which $A_0$ is a semisimple subring,
and $I$ is a two-sided ideal.  
Let $M$ be a right $A$-module.
Consider the associated graded ring
$
\gr_I A := \oplus_{d \geq 0} I^d A/I^{d+1}A,
$
and the associated graded module
$
\gr_I M := \bigoplus_{d \geq 0} M I^d /M I^{d+1}
$
for the $I$-adic filtrations on $A$ and $M$. 
Say that $M$ is a {\it (nongraded) Koszul} $A$-module
if $\gr_I M$ is a Koszul $\gr_I A$-module in the graded sense 
already defined above.  Note that this notion of nongraded
Koszulness depends implicitly not only on the $A$-module structure of $M$, but also 
on the choice of the decomposition \eqref{choice-of-direct-sum-decomposition} for $A$.
Again, there is a similar notion for a left $A$-module $M$.
Say that $A$ is a {\it (nongraded) Koszul} ring if
the right $A$-module $M=A_0=A/I$ is a {\it (nongraded) Koszul} $A$-module.
\end{definition}

\begin{remark} (on right versus left modules) \rm \ \\
We choose to use
right $R$-modules $M$ rather than left $R$-modules in
order to later reformulate Koszulness (Proposition~\ref{prop:Koszulness-via-tor})
via $\Tor^R(M,k)$ rather
than $\Ext_R(M,k)$.  In particular, Koszulness of the ring $R$ has
been defined here in terms of the {\it right} $R$-module structure on
$R_0=R/\mm$.  However \cite[Prop. 2.2.1]{BGS} shows that $R$ is a Koszul ring if and only
if its opposite $R^{opp}$ is a Koszul ring.  For all the graded rings of interest in this paper
(namely, the associated graded rings $R=\gr_I k[P]_{\red}$ of the reduced 
incidence algebras $k[P]_{\red}$ defined in Definition~\ref{defn:reduced-incidence-algebra}),
one has $R_0$ commutative; see Proposition~\ref{prop:finiteness}.  
Thus for such rings $R$, Koszulness as a ring could be
defined by saying that $R_0$ is Koszul as a right $R$-module, or equivalently, $R_0$ is 
Koszul as a left $R$-module.
\end{remark}

We next review three results 
(Theorems~\ref{thm:Polo-Woodcock},~\ref{thm:PRS}, ~\ref{thm:EHHRW}) 
from the literature on combinatorial homological and commutative algebra which
Theorem~\ref{thm:main} unifies.  Each can be
phrased as characterizing Koszulness for a certain
class of rings or ideals in terms of the {\it Cohen-Macaulay}
property for certain partially ordered sets ({\it posets}) or
simplicial complexes.  For the first two results 
(Theorems~\ref{thm:Polo-Woodcock} and \ref{thm:PRS}), 
our unification will simultaneously remove an unnecessary purity/gradedness hypothesis
by considering {\it sequential Cohen-Macaulayness}, a
natural nonpure generalization of Cohen-Macaulayness
introduced by Stanley \cite[\S III.2.9]{St}.

Throughout the remainder of the paper, $k$ denotes a field.

\subsection{Incidence algebras of posets}

Let $P$ be a poset.
Given two comparable elements $x \leq y$ in $P$, the sets
$$
\begin{aligned}[]
[x,y]&:=\{z \in P: x \leq z \leq y\} \\
(x,y)&:=\{z \in P: x < z < y\} 
\end{aligned}
$$
are called the {\it closed} and {\it open} intervals between $x$ and $y$.

\begin{definition}
\label{defn:incidence-algebra}
\rm \ \\
Assume the poset $P$ is finite, and let
$\Int(P)$ be the set of all closed intervals $[x,y]$.  Then 
the  {\it incidence algebra} $k[P]$ is the 
$k$-vector space having basis $\{ \xi_{[x,y]} \}$ indexed by $\Int(P)$,
with multiplication defined $k$-bilinearly via 
$$
\xi_{[x,y]} \cdot \xi_{[z,w]} = \delta_{y,z} \cdot \xi_{[x,w]},
$$
where $\delta_{y,z}$ is the Kronecker delta.
\end{definition}

For considering Koszulness, decompose
$A=k[P]=A_0 \oplus I$ where $A_0$ is the $k$-span of
$\xi_{[x,x]}$ for $x \in  P$ and $I$ is the
$k$-span of $\xi_{[x,y]}$ with $x<y$ in $P$.

Let $A_1$ denote the $k$-span of $\xi_{[x,y]}$ for
$x<y$ a {\it covering relation} in $P$, that is, $[x,y]=\{x,y\}$.
It is not hard to check that $A=k[P]$ is always generated as an $A_0$-algebra
by $A_1$.  However, there is an $\NN$-grading on $A=k[P]$ having $A_0, A_1$
in degrees $0, 1$ if and only if
$P$ is {\it graded} in the sense that for every interval $[x,y]$,
all of its maximal chains have the same length.

Koszulness for $k[P]$ turns out to be related to a topological property of
its open intervals.  

\begin{definition}
\label{defn:Cohen-Macaulay}
\rm \ \\
Given a poset $P$, its {\it order complex} $\Delta P$ is the abstract simplicial
complex having vertex set $P$ and a face for each totally ordered subset
({\it chain}) in $P$.

Given an abstract simplicial complex $\Delta$, its dimension $\dim \Delta$ is
the maximum dimension of any of its faces.  Say
that $\Delta$ is {\it Cohen-Macaulay over $k$} if for every face $F$ of $\Delta$,
its {\it link}
$$
\link_\Delta(F):=\{ G \in \Delta: F \cup G \in \Delta, F \cap G =\varnothing\}
$$
within $\Delta$ has the property that its reduced homology
vanishes below the dimension of the link:
$$
\tilde{H}_i( \link_\Delta(F) ; k) = 0 \text{ for } -1 \leq i < \dim \link_\Delta(F).
$$
When discussing the Cohen-Macaulayness of a poset $P$, we will always mean
the Cohen-Macaulayness of its order complex $\Delta P$.
\end{definition}

\begin{remark} (on empty simplicial complexes) \rm \ \\
A word is in order about our conventions for the
reduced homology of two special simplicial complexes that
arise as degenerate cases of order complexes for open intervals.
The complex $\{\varnothing\}$ contains the empty face and no other face, 
and is the order complex $\Delta(x,y)$ when $x$ covers $y$, that is, $[x,y]=\{x,y\}$.
It has $\tilde{H}_{-1}(\{ \varnothing\}; k) = k$ and all of its
other reduced homology groups vanish.
Any complex $\Delta$ other than $\{ \varnothing \}$
has $\tilde{H}_{-1}(\Delta;k)=0$.

On the other hand, the empty complex $\varnothing$ contains no faces at all,
and is the order complex $\Delta(x,x)$.
Its homology vanishes entirely:  
$
\tilde{H}_i(\varnothing; k) = 0
$ 
for all $i \geq -1$.
\end{remark}

The first result we wish to unify is the following.

\begin{theorem} (Polo \cite[\S 1.6]{Pol}, Woodcock \cite[Theorem 3.7]{Woc})
\label{thm:Polo-Woodcock}

The incidence algebra $k[P]$ of a finite graded poset $P$
is a Koszul ring if and only if every open interval $(x,y)$ in $P$
is Cohen-Macaulay over $k$.
\end{theorem}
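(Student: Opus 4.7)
The plan is to translate Koszulness of $A = k[P]$ into a vanishing condition on the reduced homology of open intervals in $P$, and then match that condition with Cohen-Macaulayness using the join structure of links in order complexes. The first step reformulates Koszulness via Tor: $A$ is Koszul if and only if $\Tor^{A}_i(k_x, k_y)$ is concentrated in internal degree $i$ for every $x, y \in P$ and every $i \geq 0$, where $k_x := \xi_{[x,x]} A / \xi_{[x,x]} \mm$ is the simple right $A$-module at $x$. This is a standard Tor criterion; see Proposition~\ref{prop:Koszulness-via-tor}.

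Next I would compute these Tor groups using the reduced bar resolution of $A_0$ over $A$. The $A_0$-bimodule $\mm$ decomposes over pairs $u < v$ as the one-dimensional $k$-space spanned by $\xi_{[u,v]}$, so the $(x,y)$-component of $\mm^{\otimes_{A_0} i}$ has a basis indexed by strict chains $x = x_0 < x_1 < \cdots < x_i = y$ of length $i$. The bar differential sends such a chain to the alternating sum of its inner contractions $\xi_{[x_{j-1},x_j]} \otimes \xi_{[x_j,x_{j+1}]} \mapsto \xi_{[x_{j-1},x_{j+1}]}$, and under the bijection between length-$i$ chains from $x$ to $y$ and $(i-2)$-faces $\{x_1 < \cdots < x_{i-1}\}$ of the order complex $\Delta(x,y)$, this becomes the usual simplicial boundary on the augmented chain complex of $\Delta(x,y)$. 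Because $P$ is graded, every such chain has the same total internal degree $\ell(x,y)$, so the bar complex in the $(x,y)$-component lives entirely in that one internal degree, yielding
$$
\Tor^{A}_i(k_x, k_y) \;\cong\; \tilde{H}_{i-2}(\Delta(x,y); k) \qquad \text{for } (i,x,y) \neq (0,x,x),
$$
concentrated in internal degree $\ell(x,y)$. Koszulness of $k[P]$ is thereby equivalent to the statement that $\tilde{H}_j(\Delta(x,y); k) = 0$ whenever $j \neq \dim \Delta(x,y) = \ell(x,y) - 2$, for every $x < y$ in $P$.

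The direction $(\Leftarrow)$ is then immediate: Cohen-Macaulayness of $(x,y)$ applied to the empty face yields precisely this vanishing. For $(\Rightarrow)$, I would verify the Cohen-Macaulay condition at every face $F = \{z_1 < \cdots < z_k\}$ of $\Delta(x,y)$. Writing $z_0 := x$ and $z_{k+1} := y$, the link $\link_{\Delta(x,y)}(F)$ is the simplicial join
$$
\Delta(z_0, z_1) * \Delta(z_1, z_2) * \cdots * \Delta(z_k, z_{k+1}),
$$
each factor of which has reduced homology concentrated in top degree by hypothesis. The K\"unneth formula for simplicial joins propagates this concentration to the join itself, which is exactly the Cohen-Macaulay condition at $F$.

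The main obstacle is the bar-complex identification in the second step: one must match signs and internal gradings carefully so that, under the bijection between chains and faces, the bar differential becomes the simplicial boundary on $\Delta(x,y)$ (with an overall shift by two). Once this is done, the remaining ingredients (the Tor criterion for Koszulness, the link-as-join decomposition for order complexes of open intervals, and K\"unneth for joins) are standard.
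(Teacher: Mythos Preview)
Your proposal is correct and follows essentially the same route as the paper: both use the normalized bar resolution to identify $\Tor^{k[P]}_i(k_x,k_y)$ with $\tilde{H}_{i-2}(\Delta(x,y);k)$ concentrated in internal degree $\ell(x,y)$ (this is Corollary~\ref{cor:ring-tor} specialized to a graded poset with trivial $\sim$), and then invoke the Tor criterion of Proposition~\ref{prop:Koszulness-via-tor}. The one minor difference is in the passage from ``every open interval has top-concentrated homology'' to ``every open interval is Cohen--Macaulay'': the paper obtains this as the pure special case of Proposition~\ref{prop:BWW-poset-criterion} (quoted from \cite{BWW}), whereas you give the direct argument via the join decomposition of links together with the K\"unneth formula for joins---which is exactly the classical hands-on proof of that fact for order complexes and is entirely adequate here.
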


\subsection{Affine semigroup rings}

An {\it affine semigroup} $\Lambda$ is a finitely-generated subsemigroup
of the additive group $(\ZZ^n,+)$.  
The {\it affine semigroup ring} $k\Lambda$ is the 
$k$-vector space having basis $\{ \xi_\lambda \}$ indexed by $\lambda$ in $\Lambda$,
with multiplication defined $k$-bilinearly via 
$\xi_\lambda \cdot \xi_\mu = \xi_{\lambda+\mu}$.
We will assume throughout that $\Lambda$ is {\it pointed},
that is, an element $\lambda \neq 0$ in $\Lambda$ never
has its additive inverse $-\lambda$ in $\Lambda$.  Equivalently,
there exists a linear functional on $\RR^n$ which is strictly positive on
the nonzero elements of $\Lambda$.  

For considering Koszulness, decompose $A=k\Lambda=A_0 \oplus I$ 
where $A_0=k = k\xi_0$ and $I$ is the
$k$-span of $\xi_\lambda$ with $\lambda \neq 0$.

Let $A_1$ denote the $k$-span of $\xi_\lambda$ for those $\lambda$ in $\Lambda$
which are {\it indecomposable}, that is, $\lambda \neq 0$ and  
$\lambda = \lambda_1+\lambda_2$ with
$\lambda_i$ in $\Lambda$ forces either $\lambda_1=0$ or $\lambda_2=0$.
It is not hard to check that $A=k\Lambda$ is always generated as an $A_0$-algebra
by $A_1$.  However, there is an $\NN$-grading on $A=k\Lambda$ having $A_0, A_1$
in degrees $0, 1$ if and only if there exists a 
linear functional on $\RR^n$ whose value on all indecomposable $\lambda$
is $1$. The semigroups having this property are called {\em graded}. 

Koszulness for $k\Lambda$ turns out again to be related to 
poset properties of $\Lambda$, where $\Lambda$ is considered as an infinite poset
in which $\lambda \leq \mu$ if and only if $\mu - \lambda$ lies in $\Lambda$.

The second result we wish to unify is the following.

\begin{theorem} (Peeva, Reiner, and Sturmfels \cite{PRS})
\label{thm:PRS}

A graded pointed affine semigroup ring $k\Lambda$ is a
Koszul ring if and only if every open interval $(\lambda,\mu)$ in the poset $\Lambda$
is Cohen-Macaulay over $k$.
\end{theorem}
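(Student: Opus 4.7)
The plan is to translate Koszulness into vanishing of multigraded $\Tor$ and identify that $\Tor$ with reduced homology of order complexes. By the $\Tor$-reformulation of Koszulness invoked in Proposition~\ref{prop:Koszulness-via-tor}, $k\Lambda$ is Koszul iff $\Tor_i^{k\Lambda}(k,k)_j = 0$ for $i\neq j$, where $j$ is the $\NN$-degree. Since $k\Lambda$ carries a natural $\Lambda$-multigrading that refines its $\NN$-grading, this $\Tor$ decomposes over $\lambda\in\Lambda$, and one has the classical identification
\[
\Tor_i^{k\Lambda}(k,k)_\lambda \;\cong\; \tilde{H}_{i-2}(\Delta(0,\lambda);k),
\]
obtained by recognizing the $\Lambda$-homogeneous strand at multidegree $\lambda$ of the reduced bar complex $\cdots\to I^{\otimes 2}\to I\to 0$ as the augmented simplicial chain complex of the open interval $(0,\lambda)$, shifted by two.

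Let $\rho\colon\Lambda\to\NN$ be the linear functional equal to $1$ on every indecomposable element, so that $\xi_\lambda$ has $\NN$-degree $\rho(\lambda)$. Every covering relation in $\Lambda$ is a translate of $0\lessdot\nu$ with $\nu$ indecomposable, and thus adds exactly $1$ to $\rho$, making $[0,\lambda]$ a graded poset of rank $\rho(\lambda)$ and hence $\dim\Delta(0,\lambda)=\rho(\lambda)-2$. The Koszul condition rewrites as: for every nonzero $\lambda\in\Lambda$,
\[
\tilde{H}_i(\Delta(0,\lambda);k)=0 \quad\text{for all } i<\dim\Delta(0,\lambda).
\]
Pointedness of $\Lambda$ supplies translation isomorphisms $(\nu,\mu)\cong(0,\mu-\nu)$ of posets, so this vanishing applies uniformly to every open interval in $\Lambda$.

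The remaining step promotes whole-complex top-only homology to the Cohen--Macaulay property, which demands the same vanishing in every face link. Inside $\Delta(\nu,\mu)$, the link of a chain $\{\lambda_1<\cdots<\lambda_k\}$ is the simplicial join
\[
\Delta(\nu,\lambda_1)*\Delta(\lambda_1,\lambda_2)*\cdots*\Delta(\lambda_k,\mu),
\]
and the K{\"u}nneth theorem for joins over the field $k$ shows that top-only reduced homology is preserved under such joins, with top dimensions adding correctly to that of the link. Hence the $\Tor$-vanishing supplied by Koszulness yields top-only homology for every open interval, which in turn upgrades via the join decomposition to Cohen--Macaulayness of every interval; conversely, the Cohen--Macaulay hypothesis specialized to the empty-face link (the whole complex) returns exactly the homology vanishing needed for the $\Tor$-vanishing, hence Koszulness.

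The principal obstacle is the multigraded $\Tor$ formula. This is a classical Laudal--Sletsj{\o}e-style computation, but it demands careful identification of the bar differential restricted to the $\lambda$-strand with the simplicial boundary of $\Delta(0,\lambda)$, either directly via the reduced bar complex or via an explicit cellular/Koszul resolution adapted to the semigroup. Once this is secured, the remainder of the argument is essentially formal.
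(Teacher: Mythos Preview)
Your argument is correct and follows essentially the same line as the paper: the bar resolution identifies $\Tor_i^{k\Lambda}(k,k)_\lambda$ with $\tilde{H}_{i-2}(\Delta(0,\lambda);k)$ (this is the graded specialization of the paper's Theorem~\ref{thm:tor-computation} and Corollary~\ref{cor:ring-tor}), and then one upgrades acyclicity-below-top of every open interval to Cohen--Macaulayness via the join decomposition of links. The only cosmetic difference is that the paper routes through its general nongraded framework and invokes the Bj\"orner--Wachs--Welker criterion (Proposition~\ref{prop:BWW-poset-criterion}) for that last upgrade, whereas you carry it out directly with K\"unneth for joins; these are equivalent arguments.
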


\noindent
We remark that in \cite{HRW2}, the authors considered some sufficient 
(but not necessary) conditions for an affine semigroup ring $k\Lambda$ to be 
{\it nongraded} Koszul in the sense considered here.

\subsection{Componentwise linear ideals}

The third result to be unified relates
to resolutions of ideals generated by squarefree monomials within the polynomial algebra
$A=k[x_1,\ldots,x_n]$.  Consider $A$ as an $\NN$-graded ring with its usual
grading in which each variable $x_i$ has degree $1$.

\begin{definition}
\label{defn:linear-resolution}
\rm \ \\
Recall that a $\ZZ$-graded $A$-module $M$ is said to have a {\it linear resolution} as an
$A$-module if there exists an integer $d_0$ and a graded free $A$-resolution
$$
\cdots \rightarrow P^{(2)} \rightarrow P^{(1)} \rightarrow P^{(0)} \rightarrow M \rightarrow 0
$$
such that the free $A$-module $P^{(i)}$ has its basis elements all in degree $d_0+i$.
In particular, this requires that $M$ itself is {\it pure} in the sense that it is generated
in the single degree $d_0$.
\end{definition}

Linearity of resolutions for squarefree monomial ideals
is characterized in terms of the topology of a certain simplicial complex.

\begin{definition}
\label{defn:Stanley-Reisner-ideal}
\rm \ \\
Recall that an ideal in $A$ generated by squarefree monomials is the
{\it Stanley-Reisner ideal} $I_\Delta$ for the unique simplicial
complex $\Delta$ on vertex set $[n]:=\{1,2,\ldots,n\}$ whose simplices
correspond to the squarefree monomials {\it not} lying
in $I_\Delta$.  Recall also that the canonical {\it Alexander dual} $\Delta^\vee$
to the simplicial complex $\Delta$ has the same vertex set
$[n]$, and is defined by
$$
\Delta^\vee:=\{[n] \setminus G: G \not\in \Delta \}.
$$
\end{definition}

\begin{theorem} (Eagon and Reiner \cite[Theorem 3]{ER})
\label{thm:EagonR}

A Stanley-Reisner ideal $I_\Delta$ in $A=k[x_1,\ldots,x_n]$
has linear resolution as $A$-module if and only if 
the Alexander dual complex $\Delta^\vee$ is Cohen-Macaulay over $k$.
\end{theorem}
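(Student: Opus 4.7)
The plan is to derive the equivalence from an explicit computation of the multigraded Betti numbers of $I_\Delta$, combining Hochster's formula with combinatorial Alexander duality inside a simplex. Both tools are classical, and the main work lies in matching up the vanishing patterns of the resulting simplicial homology groups.

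I would start by invoking Hochster's formula, which for each $\sigma \subseteq [n]$ gives
\[
\beta^A_{i,\sigma}(k[\Delta]) = \dim_k \tilde{H}_{|\sigma|-i-1}(\Delta|_\sigma; k),
\]
where $\Delta|_\sigma := \{F \in \Delta : F \subseteq \sigma\}$ is the induced subcomplex. The short exact sequence $0 \to I_\Delta \to A \to k[\Delta] \to 0$ shifts this to $\beta^A_{i,\sigma}(I_\Delta) = \dim_k \tilde{H}_{|\sigma|-i-2}(\Delta|_\sigma; k)$. Next I would observe that within the simplex $2^\sigma$, the complex $\Delta|_\sigma$ is the Alexander dual of $\link_{\Delta^\vee}([n]\setminus\sigma)$; indeed a subset $G \subseteq \sigma$ satisfies $G \cup ([n]\setminus\sigma) \in \Delta^\vee$ iff $\sigma \setminus G \not\in \Delta$, which is the defining condition of the Alexander dual of $\Delta|_\sigma$ inside $2^\sigma$.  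Combinatorial Alexander duality on a simplex with $|\sigma|$ vertices then yields
\[
\tilde{H}_{|\sigma|-i-2}(\Delta|_\sigma; k) \cong \tilde{H}_{i-1}(\link_{\Delta^\vee}([n]\setminus\sigma); k),
\]
so one obtains the key identity $\beta^A_{i,\sigma}(I_\Delta) = \dim_k \tilde{H}_{i-1}(\link_{\Delta^\vee}(F); k)$, where $F := [n]\setminus\sigma$.

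With this formula in hand, both directions become a matter of unwinding. Linearity of the resolution of $I_\Delta$ says that there is a common minimal generator degree $d_0$ and that $\beta^A_{i,\sigma}(I_\Delta)$ vanishes unless $|\sigma| = d_0 + i$. The minimal generators of $I_\Delta$ correspond to the minimal non-faces of $\Delta$, whose complements are the facets of $\Delta^\vee$, so the existence of such a common $d_0$ is equivalent to $\Delta^\vee$ being pure of dimension $n-d_0-1$. Given purity, $\dim \link_{\Delta^\vee}(F) = n-d_0-1-|F|$; setting $j := i-1$, the linearity condition translates to $\tilde{H}_j(\link_{\Delta^\vee}(F); k) = 0$ unless $j = \dim \link_{\Delta^\vee}(F)$. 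Since reduced homology above the dimension of the complex vanishes automatically, this is equivalent to vanishing for all $j < \dim \link_{\Delta^\vee}(F)$, which is precisely the Cohen--Macaulay condition of Definition~\ref{defn:Cohen-Macaulay}.

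The main obstacle is the careful bookkeeping of homological indices in the Alexander duality step, together with keeping straight the two separate aspects of the equivalence: the \emph{dimensional} matching (linear generators in a single degree $\leftrightarrow$ purity of $\Delta^\vee$ in the appropriate dimension) and the \emph{homological} matching (vanishing patterns of $\tilde{H}_\ast$ of restrictions $\Delta|_\sigma$ $\leftrightarrow$ vanishing for links in $\Delta^\vee$).  Once Hochster's formula and the Alexander duality identification are established, both correspondences are essentially formal, and the theorem follows by verifying each separately.
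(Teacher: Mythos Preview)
Your argument is correct and is essentially the classical Eagon--Reiner proof: Hochster's formula for $\beta^A_{i,\sigma}(k[\Delta])$, the dimension shift to $I_\Delta$, the identification of $\link_{\Delta^\vee}([n]\setminus\sigma)$ with the Alexander dual of $\Delta|_\sigma$ inside $2^\sigma$, and combinatorial Alexander duality to flip the homological index. The index bookkeeping checks out, and the separation into ``purity $\leftrightarrow$ single generating degree'' and ``link homology vanishing $\leftrightarrow$ linear strand'' is exactly the right way to organize it.

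The paper does not prove this theorem directly---it is cited from \cite{ER}---but the paper's machinery does yield an alternative proof, as the pure special case of Theorem~\ref{thm:EHHRW}, deduced in Section~\ref{sec:deducing-EHHRW} from Theorem~\ref{thm:main}. That route is genuinely different from yours: rather than Hochster's formula, it computes $\Tor^A(\gr_I J, A_0)$ via the normalized bar resolution (Theorem~\ref{thm:tor-computation}), obtaining relative homology of sequential layers of the order complexes $\Delta((1,x^\alpha)|_J)$; the analysis in Section~\ref{sec:deducing-EHHRW} then shows these complexes are, up to contractible pieces, barycentric subdivisions of links in $\Delta^\vee$. Your approach is shorter, more explicit (it actually produces the formula $\beta^A_{i,\sigma}(I_\Delta)=\dim_k\tilde H_{i-1}(\link_{\Delta^\vee}([n]\setminus\sigma);k)$), and self-contained. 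The paper's approach is more structural: it treats this result as one instance of a theorem about reduced incidence algebras, and the same argument simultaneously handles the nonpure/componentwise-linear generalization (Theorem~\ref{thm:EHHRW}) without extra work.
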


Herzog and Hibi \cite{HH} introduced an interesting relaxation 
of the notion of linear resolution which is appropriate
for {\it nonpure} $\ZZ$-graded $A$-modules.

\begin{definition}
\rm \ \\
For $A=k[x_1,\ldots,x_n]$, say that 
$M$ a $\ZZ$-graded $A$-module has {\it componentwise linear resolution}
if for each $j \in \ZZ$ there is an $A$-linear resolution for
the pure $A$-submodule $M_{\langle j \rangle}$ generated by
the elements $M_j$ in $M$ of degree $j$.   

When $M$ is pure (generated in a single degree) this turns out to be equivalent 
to $M$ having linear resolution; see \cite[Lemmas 1,3]{HRW} or 
Proposition~\ref{prop:Yanagawa} below.
\end{definition}

For Stanley-Reisner ideals $I_\Delta$, componentwise
linearity turns out to correspond to a relaxation of the notion
of Cohen-Macaulayness called sequential Cohen-Macaulayness, 
introduced by Stanley \cite[\S III.2.9]{St}.
We recall here a reformulation of this definition due to 
Wachs \cite[Theorem 1.5]{Wa}, based on work of Duval \cite[\S 2]{Duv}.

\begin{definition}
\label{defn:sequential-Cohen-Macaulay}
\rm \ \\
For an abstract simplicial complex $\Delta$, the
{\it $j^{th}$ sequential layer} $\Delta^{\langle j \rangle}$ is
the subcomplex of $\Delta$ generated by all faces
of $\Delta$ of dimension at least $j$.

Say $\Delta$ is {\it sequentially acyclic over k} if
$\tilde{H}_i(\Delta^{\langle j \rangle};k)=0$ for $-1 \leq i < j \leq \dim\Delta$.

Say $\Delta$ is {\it sequentially Cohen-Macaulay over k} if
every face $F$ of $\Delta$ has $\link_\Delta(F)$ 
sequentially acyclic over $k$.
\end{definition}

For {\it pure} simplicial complexes $\Delta$, that is, those
with all facets (= maximal faces) of the same dimension,
it turns out that sequential Cohen-Macaulayness is equivalent to Cohen-Macaulayness. 
Since the ideal $I_\Delta$ is pure as an $A$-module if and only if
$\Delta^\vee$ is pure, the next theorem gives the appropriate nonpure generalization
of Theorem~\ref{thm:EagonR}.  It is the third result that we wish to unify.

\begin{theorem} (Herzog and Hibi \cite[Theorem 2.1]{HH}, Herzog, Reiner and Welker \cite[Theorem 9]{HRW})
\label{thm:EHHRW}

The Stanley-Reisner ideal $I_\Delta$ in $A=k[x_1,\ldots,x_n]$
has componentwise linear resolution as an $A$-module if and only if 
the Alexander dual complex $\Delta^\vee$ is sequentially Cohen-Macaulay over $k$.
\end{theorem}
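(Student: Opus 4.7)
The plan is to reduce Theorem~\ref{thm:EHHRW} to Theorem~\ref{thm:EagonR} by extracting from each side of the biconditional a layer-by-layer decomposition indexed by a degree $j$, and then matching these layers via Alexander duality.

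On the ideal side, I would first reduce componentwise linearity of $I_\Delta$ to linearity of each of its squarefree strands. Let $J_j$ denote the ideal generated by the squarefree degree-$j$ monomials lying in $I_\Delta$. A result of Herzog and Hibi, proved by a Hochster-formula-based comparison of the $\ZZ^n$-graded Betti numbers of $J_j$ and $(I_\Delta)_{\langle j\rangle}$, shows that $I_\Delta$ has componentwise linear resolution if and only if each $J_j$ has linear resolution. Since $J_j$ is squarefree, it is itself the Stanley-Reisner ideal $I_{\Delta_j}$ of the simplicial complex $\Delta_j$ whose faces are those $F\subseteq [n]$ containing no non-face of $\Delta$ of size $j$.

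A direct Alexander-duality computation using $\Delta^\vee = \{[n] \setminus G : G \notin \Delta\}$ then identifies $\Delta_j^\vee$ as the subcomplex of $\Delta^\vee$ consisting of all subsets of its $(n-j-1)$-dimensional faces; this is a pure subcomplex of $\Delta^\vee$ of dimension $n-j-1$. Applying Theorem~\ref{thm:EagonR} to each $J_j$, linearity of all the $J_j$ translates into Cohen-Macaulayness over $k$ of all these pure subcomplexes.

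The last step is to invoke a combinatorial equivalence, due essentially to Duval and reformulated by Wachs, that characterizes sequential Cohen-Macaulayness of $\Delta^\vee$ via Cohen-Macaulayness of precisely these pure strata, suitably indexed. Combining everything yields the claimed biconditional. The main obstacle is the bookkeeping in matching the two kinds of ``layers'': the ideal-theoretic strand $J_j$ arising from squarefree degree-$j$ generators, versus the topological sequential layer $(\Delta^\vee)^{\langle j\rangle}$ of Definition~\ref{defn:sequential-Cohen-Macaulay}. In particular, converting between the pure subcomplex of $\Delta^\vee$ spanned by faces of a single fixed dimension, which arises naturally from Alexander duality, and the subcomplex generated by all faces of dimension at least a given value requires care with indexing. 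Moreover, since sequential Cohen-Macaulayness is a link-by-link condition, one must propagate the layered equivalence uniformly across all links of $\Delta^\vee$, and that is where the combinatorial topology work will concentrate.
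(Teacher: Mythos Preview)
Your proposal is correct and essentially reproduces the original proof of Herzog--Reiner--Welker: reduce to squarefree strands $J_j$, recognize $\Delta_j^\vee$ as the pure $(n-j-1)$-skeleton $(\Delta^\vee)^{[n-j-1]}$, apply Theorem~\ref{thm:EagonR} strand by strand, and invoke Duval's criterion that $\Delta^\vee$ is sequentially Cohen--Macaulay if and only if every pure skeleton $(\Delta^\vee)^{[i]}$ is Cohen--Macaulay. The indexing concerns you raise are minor; Duval's theorem is stated precisely in terms of the pure skeleta $(\Delta^\vee)^{[i]}$, so no passage to the layers $(\Delta^\vee)^{\langle j\rangle}$ is needed, and the link condition is already built into Duval's equivalence.

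The paper, however, takes a genuinely different route, because the whole point of Section~\ref{sec:deducing-EHHRW} is to exhibit Theorem~\ref{thm:EHHRW} as a special case of the main Theorem~\ref{thm:main}. It sets $P=\NN^n$, $J=I_\Delta$, and uses Proposition~\ref{prop:Yanagawa} to translate componentwise linearity into nongraded Koszulness. Theorem~\ref{thm:main} then reduces the problem to showing that every subposet $(1,x^\alpha)|_J$ with $x^\alpha\in I_\Delta$ is sequentially acyclic if and only if $\Delta^\vee$ is sequentially Cohen--Macaulay. This is done by a direct case analysis: when $x^\alpha$ is squarefree, $(1,x^\alpha)|_J$ is the face poset of $\link_{\Delta^\vee}(F)$ for an appropriate face $F$, so its order complex is the barycentric subdivision of that link; when $x^\alpha$ is not squarefree, a contractible-carrier argument (star-shaped with respect to $\sqrt{x^\alpha}$) shows every sequential layer is contractible, hence irrelevant. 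Your approach is shorter and self-contained, relying only on Theorem~\ref{thm:EagonR} and known combinatorics; the paper's approach is longer here but achieves its structural goal of unifying Theorem~\ref{thm:EHHRW} with Theorems~\ref{thm:Polo-Woodcock} and~\ref{thm:PRS} under a single incidence-algebra framework.
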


\subsection{A unified setting and the main result}
\label{sec:unification}

We explain here how Theorem~\ref{thm:main} 
will unify the previous three results and settings, that is,
\begin{enumerate}
\item[(i)] Theorem~\ref{thm:Polo-Woodcock} on incidence algebras of posets,
\item[(ii)] Theorem~\ref{thm:PRS} on affine semigroup rings, and
\item[(iii)] Theorem~\ref{thm:EHHRW} on squarefree monomial ideals in polynomial algebras.
\end{enumerate}

Although at first glance, (ii) and (iii) look somewhat different,
the differences are superficial.  One key point is the following
observation  made independently by Yanagawa \cite{Yan} and R\"{o}mer \cite{Rom},
which we rephrase here in our language. 
See also the work of Iyengar and R\"{o}mer \cite[Theorem 5.6]{IR} for further extensions.

\begin{proposition}(\cite[Proposition 4.9]{Yan})
\label{prop:Yanagawa}

A finitely generated $\ZZ$-graded module $M$ over 
$A=k[x_1,\ldots,x_n]$ is componentwise linear if and only if 
$\gr_\mm M$ is a Koszul $A$-module, that is, if and only if $M$ itself is a nongraded Koszul
$A$-module.  
\end{proposition}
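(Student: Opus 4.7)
My approach exploits the observation that when $M$ is $\ZZ$-graded and $A$ is standard graded with $\mm = A_+$ homogeneous, each $M\mm^d$ is a graded submodule of $M$. Consequently $\gr_\mm M$ inherits two compatible gradings: the \emph{Koszul grading} $d$ coming from the $\mm$-adic filtration (the one used when testing Koszulness of the $\gr_\mm A = A$-module), and the \emph{internal grading} $j$ inherited from $M$. The plan is to systematically exploit this bigrading, reducing to the case where $M$ is generated in one internal degree and then piecing the general case together along an internal-generator-degree filtration.

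I would first handle the pure case. If $M$ is generated in a single internal degree $d_0$, then one checks directly that $M\mm^d = M_{\geq d_0 + d}$, so $(\gr_\mm M)_d = M\mm^d/M\mm^{d+1} \cong M_{d_0+d}$ and there is a canonical isomorphism of $A$-modules $\gr_\mm M \cong M$ under which the Koszul-degree-$d$ piece of $\gr_\mm M$ is the internal-degree-$(d_0+d)$ piece of $M$. Under this identification, $\gr_\mm M$ being a Koszul $A$-module translates directly into $M$ admitting a $d_0$-linear resolution. For a module pure in degree $d_0$, having a $d_0$-linear resolution is already known to be equivalent to componentwise linearity by \cite[Lemma 1]{HRW}, so the pure case is finished.

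For general $M$, I would filter by internal-generator degree: let $M_{\langle \leq j \rangle}$ denote the $A$-submodule generated by $\bigoplus_{s \leq j} M_s$, with successive quotients $Q_j := M_{\langle \leq j \rangle}/M_{\langle \leq j-1 \rangle}$, which are pure and generated in internal degree $j$. I would then show that this filtration descends to a filtration on $\gr_\mm M$ whose associated graded is $\bigoplus_j \gr_\mm Q_j$ (with Koszul gradings appropriately shifted). Using the long exact sequences of $\Tor^A(-,k)$ arising from the short exact sequences $0 \to M_{\langle \leq j-1 \rangle} \to M_{\langle \leq j \rangle} \to Q_j \to 0$, together with the $\Tor$-vanishing characterization of Koszulness (Proposition~\ref{prop:Koszulness-via-tor}), one argues inductively that $\gr_\mm M$ is Koszul iff every $\gr_\mm Q_j$ is Koszul iff every $Q_j$ has a linear resolution iff $M$ is componentwise linear (the last equivalence again via \cite[Lemmas 1, 3]{HRW}).

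The main obstacle is verifying that the internal-generator-degree filtration on $M$ indeed induces a filtration on $\gr_\mm M$ whose associated graded splits as $\bigoplus_j \gr_\mm Q_j$. This requires careful tracking of both the Koszul and internal gradings across the short exact sequences above, which are not in general split as sequences of $A$-modules; one must check that the $\mm$-adic filtrations are strictly compatible with these sequences so that the induced sequences on $\gr_\mm$ remain short exact. Once this compatibility is established, the remaining homological arguments are standard applications of the long exact sequence in $\Tor$ combined with Koszul duality.
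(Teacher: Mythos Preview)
The paper does not give its own proof of this proposition; it is simply quoted from Yanagawa \cite[Proposition 4.9]{Yan} (with R\"omer \cite{Rom} also credited), so there is no in-paper argument to compare against.  Your proposal is therefore being assessed on its own merits.

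Your bigrading observation is correct and in fact yields something cleaner than the filtration you describe: since $\mm^d=A_{\ge d}$, one checks that $(M\mm^d)_s=(M_{\langle \le s-d\rangle})_s$, and hence $\gr_\mm M$ decomposes as a \emph{direct sum} $\bigoplus_{j}Q_j$ of bigraded $A$-modules, where the summand $Q_j$ sits on the diagonal ``internal degree minus Koszul degree $=j$''.  So the ``main obstacle'' you flag (strict compatibility of the $\mm$-adic filtration with the short exact sequences) evaporates: no spectral sequence or compatibility check is needed, and $\gr_\mm M$ Koszul is immediately equivalent to each $Q_j$ having a $j$-linear resolution.

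The step you treat as routine is actually the substantive one: the equivalence ``each $Q_j$ has a linear resolution $\Leftrightarrow$ $M$ is componentwise linear'' does not drop out of \cite[Lemmas~1,~3]{HRW} directly.  It can be proved by induction on the number of generating degrees, using the exact sequences
\[
0\longrightarrow (M_{\langle d\rangle})_{\langle s\rangle}\longrightarrow M_{\langle s\rangle}\longrightarrow N_{\langle s\rangle}\longrightarrow 0,
\qquad N:=M/M_{\langle d\rangle},
\]
where $d$ is the least generating degree; here one needs \cite[Lemma~1]{HRW} (that $\mm L$ is $(e{+}1)$-linear when $L$ is $e$-linear and pure in degree $e$) to control the left-hand term, together with the trivial bound $\Tor_i(P,k)_t=0$ for $t<i+s$ when $P$ is generated in degree $s$.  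With those ingredients the long exact sequence in $\Tor$ gives both implications.  So your outline is sound, but you have inverted where the difficulty lies.
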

A second key point is that the polynomial algebra $A=k[x_1,\ldots,x_n]$ is
an affine semigroup ring, associated to the semigroup $\Lambda=\NN^n$.  
Thus we should consider nongraded Koszulness not only
for affine semigroup rings as algebras, but also for monomial ideals within them
as modules.

To unify (i) with (ii), (iii), 
one should think of an affine semigroup ring $k\Lambda$ as a certain kind of
{\it reduced incidence algebra} $k[P]_\red$, associated with the poset $P=\Lambda$
and the equivalence relation $\sim$ on the intervals $\Int(P)$
for which 
\begin{equation}
\label{semigroup-equivalence}
[\lambda,\mu] \sim [\lambda',\mu'] \quad \text{ when } \quad  \mu-\lambda=\mu'-\lambda'.
\end{equation}
We generalize this construction as follows.

\begin{definition}
\label{defn:reduced-incidence-algebra}
(cf. Doubilet, Rota and Stanley \cite[Definition 4.1, Proposition 4.3]{DRS})
\rm \ \\
Let $P$ be a poset in which every interval $[x,y]$ is finite,
and assume one has an equivalence relation on $\Int(P)$
which is {\it order-compatible} in the sense of 
Axiom \ref{ax:order-compatibility} below.
Define $k[P]_\red$ to be the $k$-vector space 
having basis $\{ \xi_{\widetilde{[x,y]}} \}$ indexed by 
the equivalence classes $\Int(P)/\!\!\sim$,
with multiplication defined $k$-bilinearly via 
\begin{equation}
\label{reduced-incidence-multiplication}
\xi_{\widetilde{[x,y]}} \cdot \xi_{\widetilde{[z,w]}} 
= \xi_{\widetilde{[x',w']}}
\end{equation}
if there exist $x' \leq y' \leq w'$ in $P$ with
$$
\begin{aligned}[]
[x',y'] &\sim [x,y] \\
[y',w'] &\sim [z,w],
\end{aligned}
$$
and zero otherwise.
\end{definition}

\begin{remark} (On incidence algebras for infinite posets) \rm \ \\ 
We should point out that in \cite{DRS}, elements of the reduced incidence
algebra are defined to be functionals on the equivalence classes $\Int(P)/\!\!\sim$ of 
intervals in $P$, and allowed to have infinite support.  However, our definition 
corresponds to the subalgebra of functionals which are supported on only finitely many
equivalence classes.
\end{remark}

Without further assumptions on $P$ and $\sim$, 
the multiplication \eqref{reduced-incidence-multiplication} may be badly behaved.  
In Section~\ref{sec:axioms} we will impose one-by-one four axioms
(Axioms \ref{ax:order-compatibility}, \ref{ax:invariance}, 
\ref{ax:finiteness}, \ref{ax:concatenation})
on $(P,\sim)$, and explain the role that 
each plays.  We avoid discussing these axioms fully here, as they are
slightly technical, and concentrate instead on their implications.
However, it will be an easy exercise to check that all four axioms
are satisfied both by
\begin{enumerate}
\item[$\bullet$]
finite posets $P$ with the trivial equivalence relation $\sim$ 
on $\Int(P)$, in which case $k[P]_\red=k[P]$ is the usual incidence algebra, and
\item[$\bullet$]
by affine semigroups $\Lambda$ considered as a poset $P$ with
the equivalence relation \eqref{semigroup-equivalence},
in which case $k[P]_\red = k\Lambda$ is the affine semigroup ring $k\Lambda$.
\end{enumerate}

\noindent
For considering Koszulness, decompose
\begin{equation}
\label{nongraded-decomposition}
A=k[P]_\red=A_0 \oplus I
\end{equation}
in which $A_0$ and $I$, respectively, are the $k$-spans of those
elements $\xi_{\widetilde{[x,y]}}$ for which $x=y$ and $x < y$, respectively.
It is not hard to see that for $(P,\sim)$ obeying these axioms, the
rings $A=k[P]_\red$ and $\gr_I A$ are isomorphic as $\NN$-graded rings 
if and only if $P$ is a graded poset.
The axioms will also imply that $A_0$ is a semisimple subalgebra, so that we
can consider Koszulness.

Lastly, we must define the notion of a monomial ideal of $k[P]_\red$.

\begin{definition}
\label{defn:monomial-ideal}
\rm \ \\
Let $P$ be a poset and $\sim$ an equivalence relation on $\Int(P)$, satisfying 
Axioms \ref{ax:order-compatibility}, \ref{ax:invariance}, 
\ref{ax:finiteness}, \ref{ax:concatenation}.

In the reduced incidence algebra $A=k[P]_\red$, call a $k$-basis element  $\xi_\alpha$ for
$\alpha \in \Int(P)/\!\!\sim$ a {\em monomial}. Say that a subspace $J$ of the two-sided ideal $I$
appearing in the decomposition \eqref{nongraded-decomposition} is a {\em monomial (left, or right, or two-sided) ideal}
if  it is a (left, or right, or two-sided) ideal in $A$ generated by a (possibly infinite) set of monomials in $A$.

Hence, such a subspace $J$ is an ($A-A_0$-, or $A_0-A$-, or $A-A$-)bi-submodule of $A$.
For example, the ideal $I$ itself is a monomial ideal which contains
all other monomial ideals $J$.

We will abuse notation slightly in denoting by $J$ both the ideal in $k[P]_\red$
and the subset of elements $\alpha$ in $\Int(P)/\!\!\sim$ that
index elements $\xi_\alpha$ in $J$.
Our main result is phrased in terms of
certain subposets $(x,y)|_J$ of the open intervals $(x,y)$ in $P$:
\begin{equation}
\label{J-restricted-interval}
(x,y)|_J :=\{z \in (x,y): \widetilde{[x,z]} \in J\}.
\end{equation}
\end{definition}

We can now state our main result.

\begin{theorem}
\label{thm:main}{}

Let $P$ be a poset and $\sim $ an equivalence relation on $\Int(P)$, satisfying Axioms
\ref{ax:order-compatibility}, \ref{ax:invariance}, \ref{ax:finiteness}, \ref{ax:concatenation}.  
Let $J$ be a monomial right ideal in $k[P]_\red$.

Then $J$ is a nongraded Koszul right $k[P]_\red$-module
if and only if for every equivalence class $\widetilde{[x,y]}$ in $J$
the poset $(x,y)|_J$ is sequentially acyclic over $k$.
\end{theorem}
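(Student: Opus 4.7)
The plan is to pass from the nongraded problem to a graded one, compute the relevant $\Tor$ groups via a chain-complex resolution, and match the resulting bi-graded vanishing conditions with sequential acyclicity.

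By definition, $J$ is nongraded Koszul over $A=k[P]_{\red}$ if and only if $\gr_I J$ is (graded) Koszul over $R:=\gr_I A$. Using the four axioms on $(P,\sim)$ I would first establish a concrete description of $R$: the basis element $\xi_{\widetilde{[x,y]}}$ lies in $I^d\setminus I^{d+1}$ with $d$ equal to the length of a longest saturated chain in $[x,y]$, and the concatenation axiom implies that multiplication in the associated graded retains the same form as in $A$ (after refining $\sim$ by maximal chain length). In particular $R$ is itself a reduced-incidence-type algebra with the same underlying $(P,\sim)$ combinatorics, $\gr_I J$ is the analogous monomial right ideal, and the subposets $(x,y)|_J$ are unchanged. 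By Proposition~\ref{prop:Koszulness-via-tor}, it then suffices to show that $\Tor^R_i(\gr_I J,R_0)$ is concentrated in internal degree $i$ for every $i\ge 0$ precisely when each $(x,y)|_J$ with $\widetilde{[x,y]}\in J$ is sequentially acyclic over $k$.

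To compute these $\Tor$ groups I would use the canonical left-$R$-projective resolution of $R_0$ whose $i$-th term has basis indexed by chains $x_0<x_1<\cdots<x_i$ in $P$, with simplicial boundary. Tensoring with $\gr_I J$ over $R$ and extracting the $\widetilde{[x,y]}$-graded piece kills every chain not going from $x$ to $y$; the right-ideal condition on $J$ further forces each interior vertex $x_j$ to satisfy $\widetilde{[x,x_j]}\in J$, so the interior of the chain lies in $(x,y)|_J$. What remains, up to a degree shift of two, is the reduced simplicial chain complex of $\Delta((x,y)|_J)$, carrying an additional internal $\NN$-grading from the $I$-adic grading on $R$ in which a chain contributes to internal degree equal to the sum of the longest-chain-lengths of its successive blocks $[x_{j-1},x_j]$. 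This yields the identification
\begin{equation*}
\Tor^R_i(\gr_I J,R_0)_{\widetilde{[x,y]}} \;\cong\; \tilde H_{i-2}\bigl(\Delta((x,y)|_J);k\bigr),
\end{equation*}
together with a refinement of each side by internal degree.

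Finally, the Koszul condition forces homological degree $i$ to equal internal degree $d$, so the surviving homology classes come from chains whose successive steps are covering relations, i.e.\ from the top-dimensional facets of each pure layer of $\Delta((x,y)|_J)$. Unpacking this correspondence between layers of internal degree and sequential layers $\Delta^{\langle j\rangle}$ of the order complex, the Koszul vanishing becomes exactly $\tilde H_i\bigl(\Delta((x,y)|_J)^{\langle j\rangle};k\bigr)=0$ for $-1\le i<j\le\dim\Delta((x,y)|_J)$, which is sequential acyclicity. I expect the main technical obstacle to be the middle step: carefully aligning the $I$-adic filtration degree of the tensors in the chain resolution with the maximal-chain-length grading on the order complex, and verifying that $\sim$ does not collapse distinct chains in a way that would corrupt the identification with the simplicial chain complex. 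Axioms 2--4 (invariance, finiteness, concatenation) appear engineered precisely to preclude such pathologies.
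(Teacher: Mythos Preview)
Your overall strategy matches the paper's: pass to $R=\gr_I A$, compute $\Tor^R_*(\gr_I J,R_0)$ via the (normalized) bar resolution, identify the $\alpha$-graded piece with a chain complex built from $(x,y)|_J$, and compare with sequential acyclicity. Two points in the middle paragraph, however, are not right and would derail the argument. First, the internal degree of a basic tensor is \emph{not} the sum of the lengths $\ell[x_{j-1},x_j]$ of all successive blocks. The leading factor lives in $\gr_I J$, not in $R$, and its degree there is $1+\dim\Delta\bigl((x_0,x_1)|_J\bigr)$, which is generally strictly smaller than $\ell[x_0,x_1]$ (it is $0$ when $\widetilde{[x_0,x_1]}$ is a minimal generator of $J$). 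With the correct grading, a chain $x_0<x_1<\cdots<x_i<x_{i+1}=y$ sits in internal degree $j$ exactly when the subchain $x_1<\cdots<x_i$ of $(x,y)|_J$ extends to a chain with $j$ elements in $(x,y)|_J$ but not to one with $j+1$ elements; this is what singles out the sequential layers $\Delta_\alpha^{\langle j-1\rangle}\setminus\Delta_\alpha^{\langle j\rangle}$. (Also, the shift in your displayed isomorphism should be $i-1$, not $i-2$: after tensoring with $\gr_I J$ there are $i$ interior vertices $x_1,\ldots,x_i$.)

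Second, and more importantly, this means the $j$-th graded piece of the resulting chain complex computes \emph{relative} homology
\[
\Tor^R_i(\gr_I J,R_0)_j^{\,\alpha}\;\cong\;\tilde H_{i-1}\bigl(\Delta_\alpha^{\langle j-1\rangle},\,\Delta_\alpha^{\langle j\rangle};k\bigr),
\]
not a direct summand of the absolute homology $\tilde H_{i-1}(\Delta_\alpha;k)$ as your phrase ``refinement of each side by internal degree'' suggests. The Koszul condition therefore reads as vanishing of these relative groups for $i\neq j$, and one still needs a separate lemma (Proposition~\ref{prop:new-seq-C-M-defn} in the paper: a long-exact-sequence plus descending induction on $j$) to show that this is equivalent to the absolute vanishing $\tilde H_i(\Delta_\alpha^{\langle j\rangle};k)=0$ for $i<j$ that defines sequential acyclicity. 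Without that bridge, your final sentence does not follow.
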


The case where the monomial ideal $J$ is chosen to be the ideal $I$ 
from \eqref{nongraded-decomposition} easily gives the following
corollary (see Section~\ref{sec:deducing-main-corollary}), 
which immediately implies Theorems~\ref{thm:Polo-Woodcock} and \ref{thm:PRS}.

\begin{corollary}
\label{cor:main-algebra}

Let $P$ be a poset and $\sim $ an equivalence relation on $\Int(P)$, satisfying Axioms
\ref{ax:order-compatibility}, \ref{ax:invariance}, \ref{ax:finiteness}, \ref{ax:concatenation}.  

Then $k[P]_\red$ is a nongraded Koszul ring 
if and only if every open interval $(x,y)$ in $P$
is sequentially Cohen-Macaulay over $k$.
\end{corollary}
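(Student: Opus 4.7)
The plan is to derive Corollary~\ref{cor:main-algebra} from Theorem~\ref{thm:main} by specializing to the two-sided monomial ideal $J = I$ from the decomposition \eqref{nongraded-decomposition}. Because every equivalence class $\widetilde{[x,z]}$ with $x<z$ lies in $I$, the restricted poset $(x,y)|_I$ coincides with $(x,y)$ for every strict inequality $x<y$. Hence Theorem~\ref{thm:main} specializes to the statement that $I$ is a nongraded Koszul right $k[P]_\red$-module if and only if every open interval $(x,y)$ of $P$ is sequentially acyclic over $k$.

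The next step is to identify Koszulness of the ring $k[P]_\red$ with Koszulness of the right module $I$. By Definition~\ref{defn:koszulity}, $k[P]_\red$ is a nongraded Koszul ring precisely when $\gr_I A_0 = A_0$, regarded as a right $\gr_I k[P]_\red$-module concentrated in degree zero, admits a linear graded projective resolution. The start of such a resolution is the surjection $\gr_I k[P]_\red \to A_0$ whose kernel is the augmentation ideal of $\gr_I k[P]_\red$; this ideal is generated in degree one, and after a downward degree shift by one is canonically isomorphic to $\gr_I I$. The remaining terms of the resolution are thus a linear graded projective resolution of $\gr_I I$, so $k[P]_\red$ is a nongraded Koszul ring if and only if $I$ is a nongraded Koszul right module.

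Combining these two reductions yields the criterion: $k[P]_\red$ is nongraded Koszul if and only if every open interval of $P$ is sequentially acyclic over $k$. To upgrade sequential acyclicity to sequential Cohen-Macaulayness of each open interval, one direction is immediate, as the empty face of $\Delta(x,y)$ has link $\Delta(x,y)$ itself. For the converse, the key geometric observation is that for any nonempty chain $F = \{z_1 < \cdots < z_m\}$ in an open interval $(x,y)$,
$$
\link_{\Delta(x,y)}(F) \;\cong\; \Delta(x,z_1) * \Delta(z_1,z_2) * \cdots * \Delta(z_m,y),
$$
a join of order complexes of strictly smaller open intervals of $P$, each of which is sequentially acyclic by hypothesis. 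I expect the principal technical obstacle to be the resulting join lemma: a join of finitely many sequentially acyclic simplicial complexes is itself sequentially acyclic. This can be verified by tracking the sequential layers of a join via the identity
$$
(\Delta_1 * \Delta_2)^{\langle j \rangle} \;=\; \bigcup_{a+b = j-1} \Delta_1^{\langle a \rangle} * \Delta_2^{\langle b \rangle},
$$
combined with the K\"unneth-style isomorphism $\tilde{H}_n(\Delta_1 * \Delta_2; k) \cong \bigoplus_{i+j = n-1} \tilde{H}_i(\Delta_1; k) \otimes_k \tilde{H}_j(\Delta_2; k)$ and a Mayer--Vietoris computation to establish the required vanishing range. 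Granted this join lemma, sequential acyclicity of every open interval of $P$ propagates to sequential Cohen-Macaulayness of every open interval, completing the proof.
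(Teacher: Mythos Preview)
Your proposal is correct and follows the paper's strategy closely for the first two reductions: specializing Theorem~\ref{thm:main} to $J=I$ (so that $(x,y)|_I=(x,y)$), and identifying nongraded Koszulness of the ring with nongraded Koszulness of the right module $I$ via the dimension-shifting argument. The paper packages the latter as Proposition~\ref{prop:dimension-shifting}, proved through the suspension isomorphism $\Tor_i^R(\mm,R_0)\cong\Tor_{i+1}^R(R_0,R_0)$, while you phrase it in terms of splicing resolutions; these are the same argument. (A minor remark: your claim that $\mm$ is generated in degree one is true here---it is Proposition~\ref{prop:generated-in-degree-one}---but is not actually needed, since the identification $\mm\cong(\gr_I I)(-1)$ holds regardless.)

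The genuine difference is in the final step, upgrading ``every open interval is sequentially acyclic'' to ``every open interval is sequentially Cohen-Macaulay.'' The paper does not prove this from scratch; it invokes the Bj\"orner--Wachs--Welker criterion (Proposition~\ref{prop:BWW-poset-criterion}, \cite[Corollary~3.5]{BWW}): a poset $Q$ has $\Delta Q$ sequentially Cohen-Macaulay if and only if every open interval of $\hat Q$ is sequentially acyclic. Applying this with $Q=(x,y)$, so that $\hat Q=[x,y]$ and all its open intervals are again open intervals of $P$, finishes immediately. Your route instead reproves the relevant instance of that criterion directly, via the join decomposition of links in an order complex together with the lemma that a join of sequentially acyclic complexes is sequentially acyclic. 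That lemma is true, and your sketch (sequential-layer decomposition of the join, K\"unneth for reduced homology of joins, Mayer--Vietoris over the union) is the standard way to establish it---indeed this is essentially how the result in \cite{BWW} is proved. So your approach is more self-contained but carries the burden of the Mayer--Vietoris bookkeeping that the paper outsources to the citation.
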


We explain here how Theorem~\ref{thm:main} also captures Theorem~\ref{thm:EHHRW},
leaving details to Section~\ref{sec:deducing-EHHRW}.
Consider $\Lambda=\NN^n$ as an affine semigroup, 
with $P=\Lambda$ and $\sim$ the equivalence relation \eqref{semigroup-equivalence}.
Here $A=k[P]_\red=k[x_1,\ldots,x_n]$ is $\NN$-graded,
so that $\gr_I A= A$.  For any Stanley-Reisner ideal $J=I_\Delta$ 
in $A$, one needs to check that sequential Cohen-Macaulayness
of $\Delta^\vee$ is equivalent to
the sequential acyclicity of $(x,y)|_J$ for
all $\widetilde{[x,y]}$ in $J$.
Comparing with the Definition~\ref{defn:sequential-Cohen-Macaulay},
one needs to check the equivalence between
sequential acyclicity of all
\begin{enumerate}
\item[$\bullet$]
links of faces in $\Delta^\vee$, and
\item[$\bullet$]
subposets $(x,y)|_J$ for $\widetilde{[x,y]}$ in $J$.
\end{enumerate}
It will turn out that all sequential layers of $(x,y)|_J$ are contractible
unless 
$
[x,y] \sim [1,\prod_{i \not\in F} x_i]
$
for some face $F$ of $\Delta^\vee$.  And in the latter case,
$\Delta( (x,y)|_J )$ turns out to be the barycentric subdivision
of the link of the face $F$ in $\Delta^\vee$, so that the
sequential acyclicity for the two is equivalent.

\vskip .2in

The remainder of the paper is structured as follows.

Section~\ref{sec:axioms} gives the four axioms to be
imposed on a poset $P$ and equivalence relation $\sim$
on $\Int(P)$ before considering the reduced incidence
algebra $k[P]_\red$.

Section \ref{sec:tor} is the crux of the proof of the main
result.  One shows via an argument with the bar resolution,
going back essentially to the work of Laudal and Sletsj{\o}e \cite{LS},
how to compute 
$\Tor^{\gr_I A}(\gr_I J,A_0)$ where $J$ is any monomial right ideal
of $A=k[P]_\red$,
in terms of the homology of the subposets
$(x,y)|_J$; see Theorem~\ref{thm:tor-computation}.

Section~\ref{sec:koszulity} uses 
Theorem~\ref{thm:tor-computation} to prove Theorem~\ref{thm:main}.
A key observation is a reformulation of the 
sequential Cohen-Macaulay property (Proposition~\ref{prop:new-seq-C-M-defn}),
tailored to this purpose. 

Section~\ref{sec:numerology} pauses to discuss in this setting
how standard numerology for Koszul rings interacts
with standard numerology for sequentially Cohen-Macaulay complexes.

Section \ref{sec:quadraticity} uses Theorem~\ref{thm:tor-computation}
to characterize combinatorially when the rings $k[P]_\red$ are quadratic.

Section \ref{sec:subgroups}  discusses two (essentially) known
results that characterize the Cohen-Macaulay and sequential Cohen-Macaulay
properties for the lattice of subgroups $L(G)$ of a finite group $G$.
A result of Bj\"orner \cite[Theorem 3.3]{Bj} characterizes Cohen-Macaulayness, 
while a result of Shareshian \cite[Theorem 1.4]{Sha} (essentially) characterizes
sequential Cohen-Macaulayness, after we add some very trivial sharpening.
We present these last results here because it would be desirable to unify 
them with Theorem~\ref{thm:main}.

\section{The axioms}
\label{sec:axioms}

Let $P$ be a poset and $\sim$ an equivalence relation on its intervals $\Int(P)$.
The following are the axioms to be imposed on $(P,\sim)$ prior to proving our main results
on the reduced incidence algebra $k[P]_\red$ from
Definition~\ref{defn:reduced-incidence-algebra}.
We list them all, then explain the role of each in the behavior of $k[P]_\red$.

\vskip .2in

\begin{axiom}(order-compatibility)
\label{ax:order-compatibility}
\newline
\noindent
If $[x,y] \sim [x',y']$ and $[y,z] \sim [y',z']$, then $[x,z] \sim [x',z']$.
\end{axiom}

\begin{axiom}(invariance)
\label{ax:invariance}
\newline
\noindent
The (lower, upper) interval mappings
\begin{equation}
\label{int-mapping}
\begin{array}{rrcl}
\intervals_{[x,y]}    
              &: [x,y] &\longrightarrow & \Int(P)/\!\!\sim \,\,\, \times \,\,\, \Int(P)/\!\!\sim\\
              & & \\
              &  z     &\longmapsto
                          &(\widetilde{[x,z]},\widetilde{[z,y]}) \\
\end{array}
\end{equation}
have the property that whenever $[x,y] \sim [x',y']$, 
\begin{enumerate}
\item[(a)] there exists a map $\tau: [x,y] \rightarrow [x',y']$
that commutes with $\intervals_{[x,y]}, \intervals_{[x',y']}$:
$$
\intervals_{[x',y']} \,\, \circ \,\, \tau = \intervals_{[x,y]},
$$
\item[(b)] and such a map $\tau$ is unique.
\end{enumerate}
\end{axiom}

\begin{axiom}(finiteness)
\label{ax:finiteness}
\newline
\noindent
The equivalence relation $\sim$ on $P$ defined by $x \sim y$ if $[x,x] \sim [y,y]$
has only finitely many equivalence classes $P/\!\!\sim$.
\end{axiom}

\begin{axiom}(concatenation)
\label{ax:concatenation}
\newline
\noindent
If $x \leq y_1$ and $y_2 \leq z$ in $P$ with $y_1 \sim y_2$, then there exist $x' \leq y' \leq z'$ in $P$
with
$$
\begin{aligned}[]
[x',y'] &\sim [x,y_1] \\
[y',z'] &\sim [y_2,z].
\end{aligned}
$$
\end{axiom}

\vskip .2in

Before explaining the significance of the axioms, we introduce a convenient
shorthand notation:  write
$$
x_0 \overset{\alpha_1}{\leq} x_1 \overset{\alpha_2}{\leq} 
       \cdots \overset{\alpha_t}{\leq} x_t
$$
to mean that $x_{i-1} \leq x_i$ in $P$ and $[x_{i-1},x_i] \sim \alpha_i$, that is, 
$\alpha_i=\widetilde{[x_{i-1},x_i]}$ in  $\Int(P)/\!\!\sim$. 
With this shorthand the definition of the multiplication in $k[P]_\red$ is 
$$
\xi_\alpha \cdot \xi_\beta = 
\begin{cases}
\xi_\gamma &\text{ if there exists }x \overset{\alpha}{\leq} y \overset{\beta}{\leq} z
\text{ with }x \overset{\gamma}{\leq} z,\\
0 & \text{ otherwise.}
\end{cases}
$$

\subsection{Role of Axiom~\ref{ax:order-compatibility}}
Axiom~\ref{ax:order-compatibility} is exactly what one needs for the 
multiplication in $k[P]_\red$ given
in Definition~\ref{defn:reduced-incidence-algebra} to be well-defined.

\subsection{Role of Axiom~\ref{ax:invariance}}
Axiom~\ref{ax:invariance} part (a) is the most convenient hypothesis
for concluding {\it associativity} of the
multiplication in $k[P]_\red$ given
in Definition~\ref{defn:reduced-incidence-algebra}.

\begin{proposition}
\label{prop:associativity}
Axioms~\ref{ax:order-compatibility} and \ref{ax:invariance} part (a)
imply that $k[P]_\red$ is an associative algebra.
\end{proposition}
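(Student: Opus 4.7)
The plan is to verify associativity directly on the monomial basis $\{\xi_\alpha\}_{\alpha \in \Int(P)/\!\!\sim}$ of $k[P]_\red$ and then extend bilinearly. Before that, I would invoke Axiom~\ref{ax:order-compatibility} once to confirm that the multiplication \eqref{reduced-incidence-multiplication} is well-defined: if $x \leq y \leq z$ and $x' \leq y' \leq z'$ are two chains with $[x,y] \sim [x',y']$ and $[y,z] \sim [y',z']$, the axiom yields $[x,z] \sim [x',z']$, so the output class $\widetilde{[x,z]}$ in \eqref{reduced-incidence-multiplication} is independent of the chosen witness.

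For associativity, the goal is to show that, for all $\alpha, \beta, \gamma, \delta$, one has $(\xi_\alpha \xi_\beta)\xi_\gamma = \xi_\delta$ if and only if $\xi_\alpha(\xi_\beta \xi_\gamma) = \xi_\delta$. Assume the left-hand side equals a nonzero $\xi_\delta$. Unpacking the definition, one obtains witnesses
$$
x \overset{\alpha}{\leq} y \overset{\beta}{\leq} z \text{ with } x \overset{\epsilon}{\leq} z,
\qquad
x' \overset{\epsilon}{\leq} z' \overset{\gamma}{\leq} w \text{ with } x' \overset{\delta}{\leq} w,
$$
where $\epsilon := \widetilde{[x,z]}$ is the value of $\xi_\alpha \xi_\beta$. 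Now $[x,z] \sim \epsilon \sim [x',z']$, so Axiom~\ref{ax:invariance}(a) provides a map $\tau:[x,z] \to [x',z']$ intertwining the interval mappings \eqref{int-mapping}. Setting $y'' := \tau(y)$, one has $[x',y''] \sim [x,y] \sim \alpha$ and $[y'',z'] \sim [y,z] \sim \beta$. The chain $x' \leq y'' \leq z' \leq w$ then witnesses simultaneously that $\xi_\beta \xi_\gamma = \xi_\mu$ for $\mu := \widetilde{[y'',w]}$, and that $\xi_\alpha \xi_\mu = \xi_{\widetilde{[x',w]}} = \xi_\delta$. Hence $\xi_\alpha(\xi_\beta\xi_\gamma) = \xi_\delta$.

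The reverse implication is entirely symmetric: from witnesses for $\xi_\alpha(\xi_\beta\xi_\gamma) = \xi_\delta$, Axiom~\ref{ax:invariance}(a) applied to the equivalence of the two intervals representing the intermediate class $\mu = \widetilde{[y'',w]}$ transports a middle element back into the outer chain realising $\xi_\alpha \xi_\mu$, producing a single length-three chain that witnesses the left parenthesization. Thus both parenthesizations vanish together or both equal the same $\xi_\delta$, proving associativity on basis vectors and hence in general.

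The only real content is the single Axiom~\ref{ax:invariance}(a) transport step, which amalgamates two separate length-two chains in $P$ into one length-three chain exhibiting all four classes $\alpha,\beta,\gamma,\delta$ simultaneously; everything else is combinatorial bookkeeping. Part~(b) of Axiom~\ref{ax:invariance} (uniqueness of $\tau$) plays no role here—as the statement of the proposition already signals, only existence is used.
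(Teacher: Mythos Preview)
Your proof is correct and follows essentially the same route as the paper's own argument: assume one parenthesization is nonzero, unpack the two length-two witness chains, and use the transport map $\tau$ from Axiom~\ref{ax:invariance}(a) on the overlapping interval to merge them into a single length-three chain witnessing the other parenthesization. The only cosmetic difference is that you phrase it as a biconditional on matching a fixed basis element $\xi_\delta$ and also include the well-definedness remark (which the paper separates out into its discussion of Axiom~\ref{ax:order-compatibility}); the mathematical content is identical.
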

\begin{proof}
By bilinearity, it suffices to check for any three equivalence
classes $\alpha, \beta, \gamma$ in $\Int(P)/\!\!\sim$ that
$$
( \xi_\alpha \xi_\beta ) \xi_\gamma
=  \xi_\alpha ( \xi_\beta  \xi_\gamma ).
$$
If both sides are zero, there is nothing to show.  Assume that the
left side is nonzero;  the argument will be similar if one assumes that
the right side is nonzero.  

This implies $\xi_\alpha \xi_\beta=:\xi_\delta$ is nonzero, and hence there exist
$x \overset{\alpha}{\leq }y \overset{\beta}{\leq} z$ in $P$ with 
$x \overset{\delta}{\leq} z$.
Since 
$
( \xi_\alpha \xi_\beta ) \xi_\gamma = \xi_\delta \xi_\gamma = :\xi_\epsilon
$ 
is nonzero, there must also exist
$x' \overset{\delta}{\leq} z' \overset{\gamma}{\leq} w'$ in $P$ with
$x' \overset{\epsilon}{\leq} w'$.

Now use the map $\tau:[x,z] \rightarrow [x',z']$
provided by Axiom~\ref{ax:invariance} part (a) to produce $y':=\tau(y)$
with 
$
x' \overset{\alpha}{\leq} y' \overset{\beta}{\leq} z' \overset{\gamma}{\leq} w'.
$
in $P$.  However, the existence of this (multi-)chain
also shows that $\xi_\alpha (\xi_\beta \xi_\gamma)$ is
nonzero, and equal to $\xi_\epsilon$.
\end{proof}

Axiom~\ref{ax:invariance} part (b) plays a different role, in that it
implies a certain {\it internal} rigidity for every interval $[x,y]$
(see Proposition~\ref{prop:rigidity}(i) below), as well as showing that
the maps $\tau$ are consistent whenever their domains overlap
(see Proposition~\ref{prop:rigidity}(iii) below), and 
that they provide a coherent family of isomorphisms
between $\sim$-equivalent intervals (see 
Proposition~\ref{prop:rigidity}(ii) below).
This last property explains why it makes sense to consider the smaller
object $k[P]_\red$ instead
of all of $k[P]$ whenever the relation $\sim$ is nontrivial.

\begin{proposition}
\label{prop:rigidity}
Axioms~\ref{ax:order-compatibility} and \ref{ax:invariance} 
imply the following.

\begin{enumerate}
\item[(i)]
The map
$
\intervals_{[x,y]} : [x,y] \longrightarrow  \Int(P)/\!\!\sim \,\,\, \times \,\,\, \Int(P)/\!\!\sim
$
defined in \eqref{int-mapping} is always injective.

\item[(ii)]
When $[x,y] \sim [x',y']$, the
two maps provided by Axiom~\ref{ax:invariance}
$$
\begin{aligned}[]
[x,y] &\overset{\tau}{\longrightarrow} [x',y'] \\
[x',y'] &\overset{\tau'}{\longrightarrow} [x,y] \\
\end{aligned}
$$
are mutually inverse poset isomorphisms.

\item[(iii)]
The maps $\tau$ are consistent in the sense that 
whenever $[x,z] \sim [x',z']$ and $\tau:  [x,z] \rightarrow [x',z']$
is the unique map in Axiom~\ref{ax:invariance}, then
for any $y$ in $[x,z]$, the two other maps
\begin{equation}
\label{coherent-restrictions}
\begin{aligned}
\tau_{[x,y]}: & [x,y] \rightarrow [x',\tau(y)] \\
\tau_{[y,z]}: & [y,z] \rightarrow [\tau(y),z']
\end{aligned}
\end{equation}
coincide with the restrictions of $\tau$ to 
the domains $[x,y]$ and $[y,z]$, respectively.

\end{enumerate}
\end{proposition}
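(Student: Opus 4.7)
The three parts are logically interdependent, but the plan is to prove them in the order (i), (iii) with order-preservation of $\tau$ as a byproduct, then (ii).

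For part (i), my plan is to apply the uniqueness clause, Axiom~\ref{ax:invariance}(b), to the equivalence $[x,y]\sim[x,y]$. If two elements $z,z'\in [x,y]$ have $\intervals_{[x,y]}(z)=\intervals_{[x,y]}(z')$, then both the identity map and the involution $\sigma:[x,y]\to[x,y]$ swapping $z$ and $z'$ (and fixing everything else) satisfy $\intervals_{[x,y]}\circ \sigma=\intervals_{[x,y]}$. Uniqueness forces $\sigma=\mathrm{id}$, giving $z=z'$.

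For part (iii), I start from $\tau:[x,z]\to [x',z']$ and $y\in[x,z]$. From $\intervals_{[x',z']}\circ\tau=\intervals_{[x,z]}$ one reads off that $[x,y]\sim[x',\tau(y)]$ and $[y,z]\sim[\tau(y),z']$, so Axiom~\ref{ax:invariance}(a) produces the maps $\tau_{[x,y]}:[x,y]\to[x',\tau(y)]$ and $\tau_{[y,z]}:[y,z]\to[\tau(y),z']$. For a given $w\in[x,y]$, I would verify (using the intertwining property of $\tau_{[x,y]}$ together with Axiom~\ref{ax:order-compatibility}, applied to concatenate $[\tau_{[x,y]}(w),\tau(y)]\sim[w,y]$ with $[\tau(y),z']\sim[y,z]$) that $\intervals_{[x',z']}(\tau_{[x,y]}(w))=\intervals_{[x,z]}(w)=\intervals_{[x',z']}(\tau(w))$. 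Part (i) then forces $\tau_{[x,y]}(w)=\tau(w)$; in particular $\tau(w)\in[x',\tau(y)]$, which gives both the claimed coincidence and the fact that $\tau$ is order-preserving on $[x,z]$. The analogous argument handles $\tau_{[y,z]}$.

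For part (ii), having $\tau:[x,y]\to[x',y']$ and $\tau':[x',y']\to[x,y]$ from Axiom~\ref{ax:invariance}(a), the composite $\tau'\circ\tau:[x,y]\to[x,y]$ satisfies $\intervals_{[x,y]}\circ(\tau'\circ\tau)=\intervals_{[x,y]}$ by two applications of the intertwining property, so uniqueness in (b), matched against the identity, yields $\tau'\circ\tau=\mathrm{id}$; symmetrically $\tau\circ\tau'=\mathrm{id}$. Thus $\tau$ and $\tau'$ are mutually inverse bijections, and each is order-preserving by part (iii), completing the proof that they are mutually inverse poset isomorphisms.

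The main obstacle is part (iii): one must cleanly separate the existence of the restricted map $\tau_{[x,y]}$ (guaranteed by Axiom~\ref{ax:invariance}(a) applied in the sub-equivalence $[x,y]\sim[x',\tau(y)]$) from the equality $\tau_{[x,y]}=\tau|_{[x,y]}$, since a priori $\tau|_{[x,y]}$ need not even take values in $[x',\tau(y)]$. The key mechanism that forces this is the combination of Axiom~\ref{ax:order-compatibility} (to certify the right $\intervals_{[x',z']}$-image) with the injectivity from part (i).
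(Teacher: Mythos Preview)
Your proposal is correct and uses the same core mechanisms as the paper: uniqueness in Axiom~\ref{ax:invariance}(b) applied to $[x,y]\sim[x,y]$ for part~(i), the standard universal-property argument for mutual inverseness in part~(ii), and the combination of the sub-interval map from Axiom~\ref{ax:invariance}(a), Axiom~\ref{ax:order-compatibility}, and the injectivity from~(i) to pin down the restricted maps.

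The only difference from the paper is the order of exposition. The paper proves (i), then (ii), then (iii): within (ii) it establishes order-preservation of $\tau$ by a direct argument (produce an element $w'$ in $[x',\tau(z)]$ via Axiom~\ref{ax:invariance}(a), then use Axiom~\ref{ax:order-compatibility} and injectivity to force $w'=\tau(w)$), and only afterward, in (iii), uses the now-known order-preservation to make sense of the restrictions of $\tau$. You instead prove (iii) first and extract order-preservation as a byproduct (since $\tau_{[x,y]}(w)=\tau(w)$ lands in $[x',\tau(y)]$), then finish (ii). The computation you carry out in (iii) is line-for-line the same as the paper's computation inside (ii); your reordering simply avoids stating that computation twice, which is arguably a bit cleaner. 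Either route works.
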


\begin{proof}
For (i), apply both parts of Axiom~\ref{ax:invariance} to the
trivial equivalence $[x,y] \sim [x,y]$;  the map
$\intervals_{[x,y]}$ out of $[x,y]$ is injective if and only if
the identity map is the unique function from $[x,y]$ to itself that
commutes with $\intervals_{[x,y]}$.

For (ii), one first argues that the maps $\tau, \tau'$
are mutually inverse as {\it set} maps in the customary way for 
maps satisfying universal properties:
the composites $\tau'\circ \tau$ and $\tau \circ \tau'$
commute with the maps $\intervals_{[x,y]},\intervals_{[x',y']}$, 
out of $[x,y]$ and $[x',y']$, and hence must coincide with the identity maps on $[x,y]$ and $[x',y']$
by the uniqueness in Axiom~\ref{ax:invariance} part (b).

To show that $\tau: [x,y] \rightarrow [x',y']$ is order-preserving,
we assume $x \leq w \leq z \leq y$ and then we check that $\tau(w) \leq \tau(z)$.
Since $[x,z] \sim [x',\tau(z)]$, applying Axiom~\ref{ax:invariance} part (a) 
provides us with an element $w'$ for which 
\begin{equation}
\label{w'-properties}
\begin{aligned}[]
 [x,w] & \sim [x',w'],\\
 [w,z] & \sim [w',\tau(z)].\\
\end{aligned}
\end{equation}
Since $[z,y]\sim [\tau(z), y']$, combining this with the second equivalence
in \eqref{w'-properties} and using Axiom~\ref{ax:order-compatibility},
one concludes that
\begin{equation}
\label{another-w'-property}
[w,y] \sim [w',y'].
\end{equation}
Combining the first equivalence in \eqref{w'-properties} with \eqref{another-w'-property},
one obtains
$$
\begin{aligned}[]
[x',w'] & \sim [x,w] ( \sim [x',\tau(w)] ), \\
[w',y'] & \sim [w,y] ( \sim [\tau(w),y'] ), \\
\end{aligned}
$$
and hence $w'$, $\tau(w)$ have the same image under 
$\intervals_{[x',y']}:[x',y'] \rightarrow (\Int(P)/\!\!\sim\!\!~)^2$.  
Thus $w'=\tau(w)$ by the injectivity of $\intervals_{[x',y']}$
already proven in part (i). We conclude that $\tau(w) \leq \tau(z)$.

For (iii), note that since $\tau:[x,z] \rightarrow [x',z']$ is order-preserving by part (ii),
its restrictions to the domains $[x,y]$ and $[y,z]$ do give maps 
$$
\begin{aligned}
\Res^{[x,z]}_{[x,y]}(\tau)
: & [x,y] \rightarrow [x',\tau(y)], \\
\Res^{[x,z]}_{[y,z]}(\tau)
: & [y,z] \rightarrow [\tau(y),z'].
\end{aligned}
$$
We wish to show that these restrictions of $\tau$ coincide with the maps 
$\tau_{[x,y]}, \tau_{[y,z]}$ in \eqref{coherent-restrictions}.
We give the argument here for why $\Res^{[x,z]}_{[x,y]}(\tau) = \tau_{[x,y]}$;
the other equality is similar.   

By Axiom~\ref{ax:invariance}(b) applied to $\tau$, it suffices to
show that for a typical element $u$ in $[x,y]$, one has equivalences
$[x',\tau_{[x,y]}(u)] \sim [x', \tau(u)]$ and $[\tau_{[x,y]}(u),z'] \sim [\tau(u),z']$.
The first equivalence holds because both intervals $[x',\tau_{[x,y]}(u)]$ and 
$[x', \tau(u)]$ are equivalent to $[x,u]$, by the defining properties of $\tau_{[x,y]}$
and $\tau$.  For the second of these equivalences, first apply 
Axiom~\ref{ax:order-compatibility} to
$$
\begin{aligned}[]
[u,y] & \sim [\tau_{[x,y]}(u),\tau(y)]  \textrm{ and}\\
[y,z] & \sim [\tau(y),z'],  \\
\end{aligned}
$$ 
obtaining $[u,z] \sim [\tau_{[x,y]}(u),z']$.
Then one has 
$$
[\tau_{[x,y]}(u),z'] \sim [u,z] \sim [\tau(u),z'],
$$
as desired.
\end{proof}

We remark on a few consequences of Proposition~\ref{prop:rigidity} for
defining combinatorial invariants of the equivalence classes $\alpha=\widetilde{[x,y]}$
in $\Int(P)/\!\!\sim$ in terms of their representative intervals $[x,y]$.

\begin{definition}
\rm \ \\
Define the equivalence relation $\sim$
on $P$ by $x \sim y$ if $[x,x] \sim [y,y]$, and write $P/\!\!\sim$
for the collection of equivalence classes $\{\tilde{x}\}=\{\widetilde{[x,x]}\}$.
\end{definition}

Under the assumptions of Proposition~\ref{prop:rigidity}, part (i) says that the {\it source}
and {\it target} maps
$$
\begin{array}{rrcl}
s,t: &\Int(P)/\!\!\sim &\longrightarrow &P/\!\!\sim \\
     &\alpha:=\widetilde{[x,y]}&\overset{s}{\longmapsto} &s(\alpha):=\tilde{x} \\
     &\alpha:=\widetilde{[x,y]}&\overset{t}{\longmapsto} &t(\alpha):=\tilde{y}
\end{array}
$$
are well-defined.

\begin{definition}
\rm \ \\
Define the {\it length} $\ell[x,y]$ of an interval in $P$ to be
the length $\ell$ of the longest chain $x=x_0 < x_1 < \cdots < x_\ell =y$.
\end{definition}

Under the assumptions of Proposition~\ref{prop:rigidity}, part(ii) implies
that one can define the length $\ell(\alpha):=\ell[x,y]$ 
for an equivalence class $\alpha=\widetilde{[x,y]}.$

Having defined the notion of length, one can check that,
assuming both Axioms~\ref{ax:order-compatibility} and \ref{ax:invariance},
one has the following
description of the associated graded ring $\gr_I A$ when $A=k[P]_\red$:
it has $k$-basis $\{ \bar{\xi}_{\alpha} \}$ indexed by $\alpha$ in $\Int(P)/\!\!\sim$,
with multiplication defined $k$-bilinearly by
\begin{equation}
\label{gr-multiplication}
\bar{\xi}_{\widetilde{[x,y]}} \cdot \bar{\xi}_{\widetilde{[z,w]}} 
= \bar{\xi}_{\widetilde{[x',w']}}
\end{equation}
if there exist $x' \leq y' \leq w'$ in $P$ with
$$
\begin{aligned}[]
[x',y'] &\sim [x,y] \\
[y',w'] &\sim [z,w] \\
\ell[x',w']&=\ell[x',y']+\ell[y',w']
\end{aligned}
$$
and $\bar{\xi}_{\widetilde{[x,y]}} \cdot \bar{\xi}_{\widetilde{[z,w]}}=0$ otherwise. 
Furthermore, the length function $\ell$ describes the
$\NN$-graded components of $\gr_I A$:
the $d^{th}$ homogeneous component 
$(\gr_I A)_d = I^d/I^{d+1}$ is the $k$-span of
$\{ \bar{\xi_\alpha}: \ell(\alpha)=d \}$.

\subsection{Role of Axiom~\ref{ax:finiteness}}
Axiom~\ref{ax:finiteness} is imposed so that $A:=k[P]_\red$
is a ring with unit, and so that
when one decomposes $A=A_0 \oplus I$ as in \eqref{nongraded-decomposition},
the $k$-subalgebra $A_0$ is finite-dimensional over $k$.
We can be more precise about its consequences.
Let $R:=\gr_I A$, so that $R_0=A/I=A_0$.

For each $\alpha$ in $\Int(P)/\!\!\sim$, the
one-dimensional $k$-vector spaces 
$$
\begin{aligned}
A_\alpha:=k\xi_\alpha & \subset A \\
R_\alpha:=k\bar{\xi}_\alpha & \subset R \\
\end{aligned}
$$
are $A_0-A_0$-sub-bimodules of $A, R$.
Note that when $\alpha=\tilde{x}$ in $P/\!\!\sim$, the
element $\xi_{\tilde{x}}=\bar{\xi}_{\tilde{x}}$ 
is idempotent ($\xi_{\tilde{x}}^2=\xi_{\tilde{x}}$), and hence 
$k_{\tilde{x}}:=A_{\tilde{x}}=k\xi_{\tilde{x}}$ 
is a subalgebra of $A_0, A,$ and $R$,
isomorphic to the field $k$.

\begin{proposition}
\label{prop:finiteness}
Assume the pair $(P,\sim)$ satisfies Axioms~\ref{ax:order-compatibility},
\ref{ax:invariance}, and \ref{ax:finiteness}.
Then the elements
$
\{ \xi_{\tilde{x}} : \tilde{x} \in P/\!\!\sim \}
$
form a (finite) complete system of mutually orthogonal idempotents
for the rings $A_0, A, R$.  In particular, the ring
$A_0$ is the direct product of the (finitely) many fields $k_{\tilde{x}}$,
and hence a commutative semisimple finite-dimensional $k$-algebra.

Furthermore, $A, R$ have the following $A_0-A_0$-bimodule
decompositions:
$$
\begin{aligned}
A &= \bigoplus_{(\tilde{x},\tilde{y}) \in P/\! \sim \times P/ \!\sim} 
      \xi_{\tilde{x}} A \xi_{\tilde{y}} \\
R &= \bigoplus_{(\tilde{x},\tilde{y}) \in P/\! \sim \times P/ \!\sim} 
      \xi_{\tilde{x}} R \xi_{\tilde{y}}\\
\text{ where }& \\
\xi_{\tilde{x}} A \xi_{\tilde{y}}
&= \bigoplus_{\substack{\alpha \in \Int(P)/\! \sim:\\
                s(\alpha) =\tilde{x}\\ t(\alpha) = \tilde{y}}}  A_{\alpha} \\
\xi_{\tilde{x}} R \xi_{\tilde{y}}
&= \bigoplus_{\substack{\alpha \in \Int(P)/\! \sim:\\
                s(\alpha) = \tilde{x}\\ t(\alpha) = \tilde{y}}}  R_{\alpha} 
\end{aligned}
$$
\end{proposition}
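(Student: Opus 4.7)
The plan is to verify directly from the multiplication rule in $k[P]_{\red}$ that $\{\xi_{\tilde x} : \tilde x \in P/\!\sim\}$ is a complete orthogonal system of idempotents, and then read off everything else via Peirce decomposition. The one technical observation I would establish first is that $[u,v] \sim [x,x]$ forces $u=v$: by Proposition~\ref{prop:rigidity}(ii) there is a poset isomorphism $\tau: [x,x] \to [u,v]$, and since $[x,x]$ has only one element, so does $[u,v]$.

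With this in hand, idempotence $\xi_{\tilde x}^2 = \xi_{\tilde x}$ is witnessed by the trivial chain $x \overset{\tilde x}{\le} x \overset{\tilde x}{\le} x$. For orthogonality when $\tilde x \ne \tilde y$, the product $\xi_{\tilde x}\cdot \xi_{\tilde y}$ requires witnesses $u \le v \le w$ with $[u,v]\sim[x,x]$ and $[v,w]\sim[y,y]$; both force $u=v=w$, whence $[x,x]\sim[u,u]\sim[y,y]$ by transitivity, contradicting $\tilde x \ne \tilde y$. More generally, the same analysis shows that for $\alpha = \widetilde{[p,q]}$ one has $\xi_{\tilde z}\cdot \xi_\alpha = \xi_\alpha$ when $\tilde z = s(\alpha)$ and zero otherwise (witnessed by $p\overset{\tilde p}{\le} p\overset{\alpha}{\le} q$), and symmetrically $\xi_\alpha \cdot \xi_{\tilde w} = \xi_\alpha$ iff $\tilde w = t(\alpha)$. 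At this point Axiom~\ref{ax:finiteness} comes in crucially: it guarantees $P/\!\sim$ is finite, so $e := \sum_{\tilde x} \xi_{\tilde x}$ is a genuine element of $A$, and the previous computation shows $e \cdot \xi_\alpha = \xi_\alpha = \xi_\alpha \cdot e$ on every basis element (exactly one summand of $e$ contributes on each side), so $e$ is the multiplicative identity.

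Finally, by definition $A_0$ is the $k$-span of $\{\xi_{\tilde x}\}$, so the orthogonal idempotence identifies $A_0$ with the direct product $\prod_{\tilde x} k_{\tilde x}$ of copies of $k$, yielding commutative semisimplicity and finite-dimensionality. The bimodule decomposition $A = \bigoplus_{\tilde x,\tilde y} \xi_{\tilde x} A \xi_{\tilde y}$ is then the formal Peirce decomposition for any complete orthogonal system of idempotents; identification of each summand with $\bigoplus_\alpha A_\alpha$ summed over $\alpha$ with $s(\alpha)=\tilde x$ and $t(\alpha)=\tilde y$ is immediate from $\xi_{\tilde x}\cdot \xi_\alpha\cdot \xi_{\tilde y}$ being $\xi_\alpha$ precisely when $(s(\alpha),t(\alpha))=(\tilde x,\tilde y)$. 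The argument for $R=\gr_I A$ is literally the same with $\bar\xi$ in place of $\xi$ using \eqref{gr-multiplication}, since zero-length equivalence classes $\tilde x$ satisfy $\xi_{\tilde x} = \bar\xi_{\tilde x}$. There is no genuine obstacle here; the only subtle points are recognizing that Axiom~\ref{ax:invariance}(b) (via Proposition~\ref{prop:rigidity}(ii)) is what collapses ``point-like'' equivalences, and that Axiom~\ref{ax:finiteness} is precisely what makes $\sum_{\tilde x}\xi_{\tilde x}$ into a usable identity element.
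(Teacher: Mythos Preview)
Your proof is correct and follows essentially the same approach as the paper's: verify orthogonal idempotence from the multiplication rule (using Proposition~\ref{prop:rigidity} to collapse intervals equivalent to singletons), invoke Axiom~\ref{ax:finiteness} to form the finite sum $\sum_{\tilde x}\xi_{\tilde x}$ as identity, and read off the Peirce decomposition. The only cosmetic difference is that the paper collapses $[u,v]\sim[x,x]$ to $u=v$ via the length function $\ell$ (well-defined by Proposition~\ref{prop:rigidity}) rather than the poset isomorphism of part (ii), but both arguments are equivalent consequences of that proposition.
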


\begin{proof}
We first show Axioms~\ref{ax:order-compatibility},
\ref{ax:invariance}, \ref{ax:finiteness} imply that 
$
\{ \xi_{\tilde{x}} : \tilde{x} \in P/\!\!\sim \}
$
are mutually orthogonal idempotent elements in the ring  $A$ (hence also in $A_0$). Namely, that
\begin{equation}
\label{eq-orthogonal-idempotents}
\xi_{\tilde{x}}\cdot \xi_{\tilde{y}}=\delta_{\tilde{x},\tilde{y}} \xi_{\tilde{x}}
\end{equation}
holds, where $\delta_{\tilde{x},\tilde{y}} $ is the Kronecker delta.
Indeed, if $\xi_{\tilde{x}}\cdot \xi_{\tilde{y}} \neq 0$, there exists $u\leq v\leq w$ such that
$[u,v]\sim [x,x]$ and $[v,w] \sim [y,y]$ and $\xi_{\tilde{x}}\cdot \xi_{\tilde{y}}=\xi_{\widetilde{[u,w]}}$.
By Proposition~\ref{prop:rigidity}, $\ell([u,v])=\ell([x,x])=0$ and $\ell([v,w])=\ell([y,y])=0$. 
This gives $u=v=w$ and $x\sim y$.

Note that given $m=\sum_{\alpha \in \Int(P)/\! \sim} c_{\alpha} \xi_\alpha$ in $A$, one can rewrite it as
\begin{equation}
\label{eq-an-element-in-A}
m= \sum_{\alpha \in \Int(P)/ \!\sim} c_{\alpha} (\xi_{s(\alpha)} \cdot \xi_\alpha \cdot \xi_{t(\alpha)}).
\end{equation}
Use this observation, Axiom \ref{ax:finiteness} and \eqref{eq-orthogonal-idempotents} to conclude that
the multiplicative identity element in $A$ is $1=\sum_{\tilde{x} \in P/\!\sim} \xi_{\tilde{x}}$.

Equation \eqref{eq-an-element-in-A} may be used again to derive the decomposition for $A$ in the second 
part of the conclusion. 

Similar arguments are required to prove the assertions about the ring $R$.
\end{proof}

\subsection{Role of Axiom~\ref{ax:concatenation}}
We will be working with tensor products 
$$
A^{\otimes_{A_0}i}:=
\underbrace{A \otimes_{A_0} A \otimes_{A_0} \cdots \otimes_{A_0} A}_{i \text{ times}}
$$
considered as $A_0-A_0$-bimodules, and similarly for
$R^{\otimes_{A_0}i}$ where $R=\gr_I A$.
Axiom~\ref{ax:concatenation} is imposed so as to make 
their $A_0-A_0$-bimodule decomposition 
indexed by $\Int(P)/\!\!\sim$, similar to the case
$i=1$ described in Proposition~\ref{prop:finiteness}.

For notational purposes we are fixing once and for all for every
equivalence class $\alpha$ in $\Int(P)/\!\!\sim$ a representative interval
$[x(\alpha),y(\alpha)]$, i.e.
\begin{equation}
\label{eqn:xalpha-yalpha}
\alpha=\widetilde{[x(\alpha),y(\alpha)]}
\end{equation}

\begin{proposition}
\label{prop:tensor-bases}
Assume the pair $(P,\sim)$ satisfies Axioms~\ref{ax:order-compatibility}, 
\ref{ax:invariance}, \ref{ax:finiteness}, \ref{ax:concatenation}.
Then
$$
\begin{aligned}
A^{\otimes_{A_0}i}
&= \bigoplus_{(\tilde{x},\tilde{y}) \in P/\! \sim \times P/\! \sim} 
      \xi_{\tilde{x}} A^{\otimes_{A_0}i} \xi_{\tilde{y}}, \\
R^{\otimes_{A_0}i}
&= \bigoplus_{(\tilde{x},\tilde{y}) \in P/\! \sim \times P/\! \sim} 
      \xi_{\tilde{x}} R^{\otimes_{A_0}i} \xi_{\tilde{y}},\\
\text{ where }& \\
\xi_{\tilde{x}} A^{\otimes_{A_0}i} \xi_{\tilde{y}}
&= \bigoplus_{\substack{\alpha \in \Int(P)/\! \sim:\\
                s(\alpha) =\tilde{x}\\ t(\alpha) = \tilde{y}}}  
                     (A^{\otimes_{A_0}i})_\alpha ,\\
\xi_{\tilde{x}} R^{\otimes_{A_0}i} \xi_{\tilde{y}}
&= \bigoplus_{\substack{\alpha \in \Int(P)/\! \sim:\\
                s(\alpha) = \tilde{x}\\ t(\alpha) = \tilde{y}}}  
                     (R^{\otimes_{A_0}i})_\alpha ,
\end{aligned}
$$
in which 
$(A^{\otimes_{A_0}i})_\alpha$ and
$(R^{\otimes_{A_0}i})_\alpha$
both have $k$-bases
$$
\begin{aligned}
&\xi_{\widetilde{[x_0,x_1]}} \otimes 
\xi_{\widetilde{[x_1,x_2]}} \otimes 
\cdots \otimes
\xi_{\widetilde{[x_{i-1},x_i]}} \\
&\bar{\xi}_{\widetilde{[x_0,x_1]}} \otimes 
\bar{\xi}_{\widetilde{[x_1,x_2]}} \otimes 
\cdots \otimes
\bar{\xi}_{\widetilde{[x_{i-1},x_i]}} 
\end{aligned}
$$
indexed by length $i$ (multi-)chains
$$
x(\alpha) =: x_0 \leq x_1 \leq \cdots \leq x_{i-1} \leq x_i :=y(\alpha).
$$\end{proposition}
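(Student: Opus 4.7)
The plan is to induct on $i$, with Proposition~\ref{prop:finiteness} furnishing the base case $i=1$ for both $A$ and $R$. For the inductive step I will first exhibit a ``raw'' $k$-basis of $A^{\otimes_{A_0}i}$ coming from the orthogonality of the idempotents $\xi_{\tilde x}$, and then put this raw basis in bijection with the multichain basis in the statement, using Axioms~\ref{ax:concatenation} and~\ref{ax:invariance}.

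For the raw basis: since $A = \bigoplus_\alpha A_\alpha$ with $A_\alpha = k\xi_\alpha$ lying in $\xi_{s(\alpha)} A \xi_{t(\alpha)}$ by Proposition~\ref{prop:finiteness}, and the $\xi_{\tilde x}$ are mutually orthogonal idempotents of $A_0$, the tensor $A_\alpha \otimes_{A_0} A_\beta$ vanishes unless $t(\alpha)=s(\beta)$, in which case it is one-dimensional with generator $\xi_\alpha \otimes \xi_\beta$. Iterating, $A^{\otimes_{A_0}i}$ has $k$-basis
\[
\xi_{\alpha_1} \otimes \xi_{\alpha_2} \otimes \cdots \otimes \xi_{\alpha_i}, \qquad t(\alpha_j)=s(\alpha_{j+1}) \text{ for } j=1,\ldots,i-1,
\]
respecting the $A_0$-$A_0$-bimodule decomposition: $\xi_{\tilde x} A^{\otimes_{A_0}i} \xi_{\tilde y}$ is spanned by those tuples with $s(\alpha_1)=\tilde x$ and $t(\alpha_i)=\tilde y$.

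The crux is a bijection between admissible tuples $(\alpha_1,\ldots,\alpha_i)$ and pairs $\bigl(\gamma, (x(\gamma)=x_0 \leq x_1 \leq \cdots \leq x_i = y(\gamma))\bigr)$ matching $\alpha_j = \widetilde{[x_{j-1},x_j]}$. Injectivity of the multichain-to-tuple map follows by iterating Axiom~\ref{ax:order-compatibility} along the chain, which forces the pair $(\widetilde{[x(\gamma),x_j]},\widetilde{[x_j,y(\gamma)]})$ to be determined by the tuple; this pins down each $x_j$ by the injectivity of $\intervals_{[x(\gamma),y(\gamma)]}$ from Proposition~\ref{prop:rigidity}(i), and similarly pins down $\gamma = \widetilde{[x_0,x_i]}$. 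For surjectivity, given $(\alpha_1,\ldots,\alpha_i)$, build the multichain inductively: assuming a length-$(i-1)$ multichain realizing $(\alpha_1,\ldots,\alpha_{i-1})$ has been constructed in $[x(\beta),y(\beta)]$, the condition $t(\beta)=t(\alpha_{i-1})=s(\alpha_i)$ allows an invocation of Axiom~\ref{ax:concatenation} to produce $x' \leq y' \leq z'$ in $P$ with $[x',y'] \sim \beta$ and $[y',z'] \sim \alpha_i$; then set $\gamma := \widetilde{[x',z']}$ and transport the inductive chain via the isomorphism $[x',z'] \to [x(\gamma),y(\gamma)]$ of Axiom~\ref{ax:invariance}, appending $y(\gamma)$.

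I expect the main obstacle to lie in this surjectivity step, where one must verify that transporting the length-$(i-1)$ multichain through the composed isomorphisms $[x(\beta),y(\beta)] \to [x',y'] \hookrightarrow [x',z'] \to [x(\gamma),y(\gamma)]$ preserves every intermediate equivalence class $\alpha_1,\ldots,\alpha_{i-1}$. This is exactly the content of the restriction-consistency statement in Proposition~\ref{prop:rigidity}(iii), applied to successively nested sub-intervals. Finally, the statement for $R^{\otimes_{A_0}i}$ follows by the same argument, since $R_0 = A_0$ and $R_\alpha = k\bar{\xi}_\alpha$ carries the same $A_0$-$A_0$-bimodule structure as $A_\alpha$, and the combinatorial bijection above is independent of the multiplications in $A$ or $R$.
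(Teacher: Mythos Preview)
Your proposal is correct and follows essentially the same route as the paper: first obtain the ``raw'' $k$-basis of $A^{\otimes_{A_0}i}$ indexed by admissible tuples $(\alpha_1,\ldots,\alpha_i)$ via the orthogonal idempotents, then set up a bijection with multichains in the chosen representative interval $[x(\gamma),y(\gamma)]$, using Axiom~\ref{ax:concatenation} for surjectivity and the injectivity of $\intervals$ from Proposition~\ref{prop:rigidity}(i) for injectivity. The paper iterates Axiom~\ref{ax:concatenation} all at once rather than inducting on $i$, but this is only a cosmetic difference; your explicit invocation of Proposition~\ref{prop:rigidity}(iii) to check that the transported multichain retains the intermediate classes $\alpha_j$ is in fact a point the paper's proof passes over with the phrase ``Apply Axiom~\ref{ax:invariance}(a),'' so your version is slightly more careful there.
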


\begin{proof}
We give the proof for $A$; the argument for $R$ is
essentially the same. Using the decomposition of $A_0$ as a direct
sum of the fields $k_{\tilde{x}}$, 
the $A_0$-multilinearity of the tensor products,
and the $A_0-A_0$-structure of $A$
described in Proposition~\ref{prop:finiteness},
it is easy to see that
$\xi_{\tilde{x}} A^{\otimes_{A_0}i} \xi_{\tilde{y}}$
has $k$-basis
$$
\{\xi_{\alpha_1} \otimes \cdots \otimes \xi_{\alpha_i} \}
$$
indexed by sequences $(\alpha_1,\ldots,\alpha_i)$ in $(\Int(P)/\!\!\sim)$
with
$t(\alpha_{j-1})=s(\alpha_j)$
for $j=1,2,\ldots,i+1$, assuming the
conventions that
$\alpha_0:=\tilde{x}, \alpha_{i+1}:=\tilde{y}$.
By iterating Axiom~\ref{ax:concatenation}, one sees that
for each such sequence $(\alpha_1,\ldots,\alpha_i)$, there exists
$$
x_0 \overset{\alpha_1}{\leq} x_1 \overset{\alpha_2}{\leq} 
       \cdots \overset{\alpha_i}{\leq} x_i
$$ 
in $P$ with $\tilde{x_0}=\tilde{x}, \tilde{x_i}=\tilde{y}$.
Let $\alpha:=\widetilde{[x_0,x_i]}$.

We define a map $\phi_\alpha$ from the set  of all length $i$ multichains
having $x(\alpha)=:x_0$ and $x_i:=y(\alpha)$ as above to the set of all $i$-tuples
$(\alpha_1,\dots, \alpha_i)$ for which $t(\alpha_{j-1})=s(\alpha_j), 1 \leq j \leq i+1$
and $\xi_{\alpha_1}\cdots \xi_{\alpha_i}=\xi_\alpha$. More precisely, we let 
 $$
\phi_\alpha ( x_0 \overset{\alpha_1}{\leq} x_1 \overset{\alpha_2}{\leq} 
       \cdots \overset{\alpha_i}{\leq} x_i
  ) := (\alpha_1,\dots, \alpha_i).
$$

It only remains to show the bijectivity of the map $\phi_\alpha$.
Pick $(\alpha_1,\dots, \alpha_i)$ an element in the target set. Use Axiom~\ref{ax:concatenation}
to conclude the existence of some interval $[x_0', x_i']$ in the class of $\alpha$
containing an appropriate multichain. Apply Axiom~\ref{ax:invariance}(a) and conclude
that $[x(\alpha),y(\alpha)]$ also contains such a multichain. This proves the surjectivity.
To prove the injectivity, consider two multichains
$\{x_j\}_{0\leq j \leq i}$, $\{x_j'\}_{0\leq j \leq i}$ in $[x(\alpha),y(\alpha)]$ 
that have the same image under $\phi_\alpha$. Now use Proposition~\ref{prop:rigidity}(i)
to conclude that  $x_1=x'_1$ since
$$
\begin{aligned}
\xi_{\widetilde{[x_0,x_1]}} &= \xi_{\alpha_1}= \xi_{\widetilde{[x_0, x_1']}}, \\ 
\xi_{\widetilde{[x_1,x_i]}} &= \xi_{\alpha_2} \cdots \xi_{\alpha_i} = \xi_{\widetilde{[x_1', x_i]}}.
\end{aligned}
$$
Iterate this to conclude that $\phi_\alpha$ is injective.
\end{proof}

\section{Computing Tor}
\label{sec:tor}

In the following we assume that the pair $(P,\sim)$ satisfies Axioms~\ref{ax:order-compatibility}, 
\ref{ax:invariance}, \ref{ax:finiteness}, \ref{ax:concatenation}.
Let $A:=k[P]_\red$, and let $J$ be a monomial right ideal of $A$,
as defined in Definition~\ref{defn:monomial-ideal}.
Consider the associated graded ring 
$$
\begin{aligned}
R:=\gr_I A &= \bigoplus_{d \geq 0} I^d/I^{d+1} \\
&= \underbrace{A_0}_{\deg 0} \oplus \underbrace{I/I^2}_{\deg 1} \oplus 
      \underbrace{I^2/I^3}_{\deg 2} \oplus \cdots  
\end{aligned}
$$ 
and the graded right $R$-module associated to (the right $A$-module) $J$
$$
\begin{aligned}
\gr_I J &= \bigoplus_{d \geq 0} JI^d/JI^{d+1} \\
&= \underbrace{J/JI}_{\deg 0} \oplus \underbrace{JI/JI^2}_{\deg 1} \oplus 
      \underbrace{JI^2/JI^3}_{\deg 2} \oplus \cdots . 
\end{aligned}
$$
Then $\gr_I J$ has a $k$-basis $\{\hat{\xi}_{\alpha}\}$ indexed by  $\alpha$ in $J$,
and (right) $R$-module structure described 
as follows (to be compared with \eqref{gr-multiplication}):  for
$\alpha$ in $J$ and $\beta$ in $\Int(P)/\!\!\sim$,  
$$
\hat{\xi}_{ \alpha} \cdot \bar{\xi}_{ \beta} =
\hat{\xi}_{ \gamma}
$$
if there exists $\gamma$ in $J$ with
$$
\begin{aligned}
\xi_{\alpha} \cdot \xi_{\beta}&=\xi_{\gamma},\\
\ell(\alpha) + \ell(\beta) &= \ell(\gamma),
\end{aligned}
$$
and $\hat{\xi}_{ \alpha} \cdot \bar{\xi}_{ \beta}=0$ otherwise.

There is one subtle point here about comparing
the $\NN$-gradings on $R=\gr_I A$ and
the right $R$-module $\gr_I J$.  An equivalence class
$\beta=\widetilde{[x,y]}$ in $\Int(P)/\!\!\sim$ indexes the 
element $\bar{\xi}_{ \beta}$ which is homogeneous of degree
$\ell(\alpha)=\ell[x,y]$ for the $\NN$-grading on $R=\gr_I A$.
However, an equivalence class
$\alpha=\widetilde{[x,y]}$ lying in the monomial right ideal $J$
indexes an element $\hat{\xi}_{ \alpha}$ whose degree
for the $\NN$-grading on $R=\gr_I J$ is {\it not} given
by the length $\ell(\alpha)$.  Rather, if one recalls
from \eqref{J-restricted-interval} the definition of the subposet
$$
(x,y)|_J :=\{z \in (x,y): \widetilde{[x,z]} \in J\}
$$
inside $P$, then $\hat{\xi}_{ \alpha}$ has degree in $\gr_I J$ given
by the largest value $d$ in a chain
$$
x < z_0  < z_1 < \cdots < z_{d-1} < z_d =:y
$$
that has $z_i$ in $(x,y)|_J$ for $i=0,1,\ldots,d-1$.  
Rephrased, this says that the degree of $\hat{\xi}_{ \alpha}$ 
in $\gr_I J$ is {\it one more than} the dimension of 
the order complex $\Delta((x,y)|_J)$.

We now proceed to compute $\Tor^{R}(\gr_I J,A_0)$ in terms of the 
relative simplicial homology of certain subcomplexes of
these order complexes $\Delta\left( (x,y)|_J \right)$.  This
generalizes 
a result of Cibils \cite[Proposition 2.1]{Cib} for
incidence algebras of finite posets $P$, and a result of Laudal
and Sletsj{\o}e \cite{LS} for affine semigroup rings.

Note that, since $R$, $A_0$ and $\gr_I J$ are $\ZZ$-graded, there is also a 
$\ZZ$-grading on $\Tor_i^R(\gr_I J,A_0)$.  Denote its
$j^{th}$-homogenous component by $\Tor_i^R(\gr_I J,A_0)_j$.

\begin{theorem}
\label{thm:tor-computation}
Assume the pair $(P,\sim)$ satisfies Axioms~\ref{ax:order-compatibility}, 
\ref{ax:invariance}, \ref{ax:finiteness}, \ref{ax:concatenation}.
Let $R=\gr_I A$, where $A=k[P]_\red$.
Let $J$ be a monomial right ideal of $A$,
and consider $\gr_I J$ as a right $R$-module.

Then one has an $A_0-A_0$-bimodule decomposition
\begin{equation}
\label{thm-1st-assertion}
\Tor_i^R(\gr_I J,A_0)_j \cong \bigoplus_{(\tilde{x},\tilde{y}) \in P/\!\sim \times P/\!\sim} 
  \Tor_i^R(\xi_{\tilde{x}}\gr_I J,k_{\tilde{y}})_j
\end{equation}
with
\begin{equation}
\label{thm-2nd-assertion}
\Tor_i^R(\xi_{\tilde{x}}\gr_I J,k_{\tilde{y}})_j
= \bigoplus_{\substack{\alpha \in J:\\
                s(\alpha) =\tilde{x}\\ t(\alpha) = \tilde{y}}}  
\tilde{H}_{i-1}(\Delta_{\alpha}^{\langle j-1 \rangle} ,
                  \Delta_{\alpha}^{\langle j \rangle} ;k) .
\end{equation}
Here $\Delta_{\alpha}:=\Delta( (x(\alpha),y(\alpha))|_J )$,
where the interval $[x(\alpha),y(\alpha)]$ is the fixed representative from \eqref{eqn:xalpha-yalpha} 
of the equivalence  class $\alpha$.
\end{theorem}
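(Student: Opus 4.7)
The plan is to use the bar resolution of $A_0$ as a left $R$-module to explicitly compute $\Tor^R(\gr_I J, A_0)$, then match the resulting complex with relative simplicial chain complexes on the $\Delta_\alpha$ via Proposition~\ref{prop:tensor-bases}.

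First, form the normalized bar resolution $B_i := R \otimes_{A_0} \bar{R}^{\otimes_{A_0} i}$ of $A_0$ as a left $R$-module, with $\bar{R} := \bigoplus_{d \geq 1} R_d$. The complex $C_\bullet := \gr_I J \otimes_R B_\bullet$, with $C_i \cong \gr_I J \otimes_{A_0} \bar{R}^{\otimes_{A_0} i}$, computes $\Tor^R_\bullet(\gr_I J, A_0)$ and inherits the internal $\NN$-grading (whose degree $j$ refines Tor). Since $A_0$ is a finite product of copies of $k$ indexed by $P/\!\sim$ (Proposition~\ref{prop:finiteness}), the complex splits at once into a direct sum over source/target pairs, yielding \eqref{thm-1st-assertion}. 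Proposition~\ref{prop:tensor-bases} then further splits the $(\tilde{x}, \tilde{y})$-component as a direct sum over classes $\alpha = \widetilde{[x(\alpha), y(\alpha)]}$ with $s(\alpha) = \tilde{x}$, $t(\alpha) = \tilde{y}$: the $\alpha$-piece $C_i^{(\alpha)}$ has a $k$-basis indexed by strict chains $x(\alpha) = x_0 < x_1 < \cdots < x_{i+1} = y(\alpha)$ whose first interval $\widetilde{[x_0, x_1]}$ lies in $J$, which by the right-ideal property of $J$ forces $x_1, \ldots, x_i$ all into $(x, y)|_J$. Encoding such a chain by $\sigma := \{x_1 < \cdots < x_i\}$ sets up a bijection between this basis (for $i \geq 1$) and the $(i-1)$-simplices of $\Delta_\alpha$, the case $i = 0$ corresponding to the empty simplex and the single basis element $\hat{\xi}_\alpha$.

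The crux of the argument is the combinatorial identity $\deg(\sigma) = \mathrm{ext}(\sigma) + 1$, where $\deg(\sigma) = d(\widetilde{[x_0, x_1]}) + \sum_{k=1}^{i} \ell[x_k, x_{k+1}]$ is the internal $\NN$-degree of the basis element corresponding to $\sigma$, and $\mathrm{ext}(\sigma)$ is the maximum dimension of a face of $\Delta_\alpha$ containing $\sigma$. To verify this, count the extra vertices that may be inserted into $\sigma$ to obtain a maximal face of $\Delta_\alpha$: up to $d(\widetilde{[x_0, x_1]})$ vertices below $x_1$ (by the very definition of $d$), up to $\ell[x_k, x_{k+1}] - 1$ vertices in each open interval $(x_k, x_{k+1})$ for $1 \leq k \leq i-1$ (all in $(x, y)|_J$ automatically, by the right-ideal property), and up to $\ell[x_i, y] - 1$ vertices above $x_i$; adding $\sigma$'s own $i$ vertices and subtracting one to convert from vertex count to dimension gives the identity. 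Consequently, the basis of $(C_i^{(\alpha)})_j$ is in bijection with those $(i-1)$-simplices of $\Delta_\alpha$ having $\mathrm{ext}(\sigma) = j - 1$, i.e.\ those in $\Delta_\alpha^{\langle j-1 \rangle} \setminus \Delta_\alpha^{\langle j \rangle}$, matching the basis of the augmented relative simplicial chain complex $\tilde{C}_{\bullet - 1}(\Delta_\alpha^{\langle j-1 \rangle}, \Delta_\alpha^{\langle j \rangle}; k)$.

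To finish, I match the differentials. The bar differential acts on the basis element for $\sigma$ as an alternating sum of ``merge adjacent factors'' operations, each of which geometrically deletes a vertex $x_k$. A merge gives a nonzero result in the graded module $\gr_I J$ (for the leftmost merge) or in $R$ (for the others) precisely when the relevant depth/length additivity holds; this holds exactly when $\sigma \setminus \{x_k\}$ retains extendability $j - 1$, and otherwise the face has strictly larger $\mathrm{ext}$, lies in $\Delta_\alpha^{\langle j \rangle}$, and is precisely what is quotiented out in the relative chain complex. The bar differential on $(C_\bullet^{(\alpha)})_j$ therefore matches the simplicial boundary of $\tilde{C}_{\bullet - 1}(\Delta_\alpha^{\langle j-1 \rangle}, \Delta_\alpha^{\langle j \rangle}; k)$, and passing to homology yields \eqref{thm-2nd-assertion}. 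The main obstacle is precisely this last matching step: because the $\NN$-grading on $\gr_I J$ is governed by the depth function $d$ rather than the length function $\ell$, verifying that the vanishing pattern of merges in the bar differential exactly mirrors the quotienting on the simplicial side requires careful bookkeeping.
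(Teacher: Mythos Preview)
Your proposal is correct and follows essentially the same route as the paper's own proof: normalized bar resolution over $A_0$, tensor with $\gr_I J$, use the idempotent decomposition and Proposition~\ref{prop:tensor-bases} to split into $\alpha$-pieces, then identify each graded piece with a relative simplicial chain group via the degree/extendability identity. In fact you have spelled out the differential-matching step (the equivalence between ``merge vanishes'' and ``the boundary face falls into $\Delta_\alpha^{\langle j\rangle}$'') more carefully than the paper, which simply asserts that ``one then checks'' the differentials agree.
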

Recall that $\Delta^{\langle j \rangle}$ denotes the $j^{th}$ sequential layer of $\Delta$;
see Definition~\ref{defn:sequential-Cohen-Macaulay}.
\begin{proof}
Write $R=A_0 \oplus \mm$, where $\mm:=\oplus_{d \geq 0} I^d/I^{d+1}$.
Start with the normalized bar resolution $(B,\partial)$ as a projective
resolution for $A_0$ as a left $R$-module \cite[Ch. X, \S2]{Mac}
$$
\begin{matrix}
\cdots  \rightarrow B_i \overset{\partial_i}{\rightarrow} B_{i-1} 
\overset{\partial_{i-1}}{\rightarrow} \cdots 
\overset{\partial_2}{\rightarrow} B_1 \overset{\partial_1}{\rightarrow} B_0
\end{matrix}
$$
having  
$$
\begin{aligned}
B_i&=R \otimes_{A_0} (\mm^{\otimes_{A_0} i}) \\
&=R \otimes_{A_0} \underbrace{\mm \otimes_{A_0} \cdots \otimes_{A_0} \mm}_{i \text{ times}}
\end{aligned}
$$
and differential
\begin{equation}
\label{bar-complex-differential}
\begin{aligned}
\partial_i (r \otimes m_1 \otimes \cdots \otimes m_i)
&:= r m_1\otimes m_2 \otimes \cdots \otimes m_i \\
& \qquad +\sum_{\ell=1}^{i-1}(-1)^\ell r \otimes m_1 \otimes \cdots 
                          \otimes (m_\ell m_{\ell+1}) \otimes \cdots \otimes m_i.
\end{aligned}
\end{equation}
Note that in \cite{Mac} the modules 
$B_i$ are called {\em relatively free} with respect to the 
pair of categories $R-Mod$ and $A_0-Mod$.  Because $A_0$ is not always a field, 
the $B_i$'s need not be $R$-free; but they are projective over $R$.

One applies $\gr_I J \otimes_R -$ to $(B,\partial)$, obtaining a complex
$(\gr_I J \otimes_R B, 1_{\gr_I J}\otimes \partial)$
whose homology computes $\Tor^R(\gr_I J,A_0)$.

For the assertion \eqref{thm-1st-assertion},
note that $\gr_I J$ carries the extra structure of a left $A_0$-module,
and $A_0$ carries the extra structure of a right $A_0$-module.
Such structures are easily seen to commute with the
above constructions, giving an $A_0-A_0$-bimodule structure
on $\Tor^R(\gr_I J,A_0)$.  This implies that the complex 
\begin{equation}
\label{tensored-bar-complex}
\begin{aligned}
(C,d)&:=(\xi_{\tilde{x}}\gr_I J \otimes_R B \otimes_{A_0} A_0 \xi_{\tilde{y}}, 
          \quad 1 \otimes \partial  ) \\
 &:=(\xi_{\tilde{x}}\gr_I J \otimes_R B \otimes_{A_0} k_{\tilde{y}}, 
          \quad 1 \otimes \partial  )
\end{aligned}
\end{equation}
whose homology computes $\Tor^R(\xi_{\tilde{x}} \gr_I J , A_0\xi_{\tilde{y}})$
is obtained from the one whose homology computes $\Tor^R(\gr_I J,A_0)$ 
by multiplying on the left and right by $\xi_{\tilde{x}}$, and $\xi_{\tilde{y}}$, 
respectively. From this \eqref{thm-1st-assertion} follows.

For the assertion \eqref{thm-2nd-assertion}, we examine
the complex $(C,d)$ in \eqref{tensored-bar-complex} more closely.
One finds that in homological degree $i$ it has the $k$-vector
space
\begin{equation}
\label{i-th-homological-degree}
\begin{aligned}
C_i
&=\xi_{\tilde{x}} \gr_I J \otimes_R R \otimes_{A_0} 
      (\mm^{\otimes_{A_0}i}) \otimes_{A_0} k_{\tilde{y}} \\
&\cong \xi_{\tilde{x}} \gr_I J \otimes_{A_0} 
      (\mm^{\otimes_{A_0}i}) \otimes_{A_0} k_{\tilde{y}}
\end{aligned}
\end{equation}
and that the differential $d_i$ takes the form
\begin{equation}
\label{tensored-differential}
\begin{aligned}
&d_i (\xi_{\tilde{x}} r \otimes m_1 \otimes \cdots \otimes m_i \otimes \xi_{\tilde{y}}) \\
&\quad = (\xi_{\tilde{x}} r \cdot m_1)\otimes m_2 \otimes \cdots \otimes m_i\xi_{\tilde{y}} \\
&\quad \qquad + \sum_{\ell=1}^{i-1}(-1)^\ell \xi_{\tilde{x}} r \otimes m_1 \otimes \cdots 
                          \otimes (m_\ell m_{\ell+1}) \otimes \cdots \otimes m_i \xi_{\tilde{y}}.
\end{aligned}
\end{equation}

Proposition~\ref{prop:tensor-bases} implies 
that $C_i$ from \eqref{i-th-homological-degree} decomposes as
an $A_0-A_0$-bimodule into $C_i = \oplus_{\alpha} C_i^\alpha$,
where $\alpha$ runs through the equivalence classes in $\Int(P)/\!\!\sim$
for which $s(\alpha)=\tilde{x}, t(\alpha)=\tilde{y}$, and
where $C^\alpha_i$ is described as follows.

If we let $x_0:=x(\alpha)$ and $y_0:=y(\alpha)$, then 
$C^\alpha_i$ has $k$-basis indexed by chains of the form
\begin{equation}
\label{typical-chain}
x_0 <  x_1 < \cdots < x_i < x_{i+1}=y_0,
\end{equation} 
where $\widetilde{[x_0,x_1]}$ lies in $J$.  

Since $J$ is a right ideal, this means that
each $\widetilde{[x_0,x_\ell]}$ lies in $J$ for $\ell=1,2,\ldots,i+1$, i.e.
the elements $x_1,\ldots,x_i$ lie in the subposet $(x_0,y_0)|_J$.  
Furthermore, when one restricts to the $j^{th}$-homogeneous component $C^\alpha_{i,j}$
for the $\NN$-grading on $C^\alpha_i$, one finds that
$C^\alpha_{i,j}$ has $k$-basis indexed by the chains 
as in \eqref{typical-chain} satisfying the extra condition that 
$$
\deg_{\gr_I J}\left( \hat{\xi}_{\widetilde{[x_0,x_1]}} \right) + 
        \deg_R \left( \bar{\xi}_{\widetilde{[x_1,x_2]}} \right) + 
 \cdots +\deg_R \left( \bar{\xi}_{\widetilde{[x_i,x_{i+i}]}} \right)  = j.
$$
Equivalently, since $\hat{\xi}_{\widetilde{[x_0,x_1]}}$ has
degree $1+\dim \Delta( (x_0,x_1)|_J )$ in $\gr_I J$, and
$\bar{\xi}_{\widetilde[x_m,x_{m+1}]}$ has degree $\ell[x_m,x_{m+1}]$ in $R$,
this says that
$$
\dim \Delta( (x_0,x_1)|_J ) +\ell[x_1,x_2] + \cdots +\ell[x_i,x_{i+1}] = j-1.
$$
This means that the chain $x_1 < \cdots < x_i$ inside the
subposet $(x_0,y_0)|_J$ extends to a chain with $j$ elements,
but cannot be extended to a chain with $j+1$ elements.  
Such a chain corresponds to an $(i-1)$-simplex of
$\Delta_{\alpha}$ which lies inside some $(j-1)$-simplex but
no $j$-simplex, or a face
of $\Delta_{\alpha}^{\langle j-1 \rangle}$ not lying in
$\Delta_{\alpha}^{\langle j \rangle}$, or a basis element in the
$(i-1)^{st}$ relative (reduced, simplicial) chain group for the pair 
$(\Delta_{\alpha}^{\langle j-1 \rangle},\Delta_{\alpha}^{\langle j \rangle})$.

One then checks that the differential \eqref{tensored-differential}
on $C^\alpha_{i,j}$ corresponds
to the relative simplicial boundary map for this pair, completing the proof.
\end{proof}

\section{Characterizing Koszulness}
\label{sec:koszulity}

Our goal is to prove Theorem~\ref{thm:main},
and then deduce both Corollary~\ref{cor:main-algebra}
(giving Theorems~\ref{thm:Polo-Woodcock}, \ref{thm:PRS}) 
and Theorem~\ref{thm:EHHRW}.

\subsection{Reformulating sequential acyclicity}

The following reformulation of sequential acyclicity 
is tailored to our purposes.  Although it may be new, it is very
closely related to Duval's remarks in \cite[Remark on p.6]{Duv}.

\begin{proposition}
\label{prop:new-seq-C-M-defn}
The following are equivalent for a simplicial complex $\Delta$:
\begin{enumerate}
\item[(a)]
$\Delta$ is sequentially acyclic over $k$.
\item[(b)]
For every $i,j$ with $-1 \leq i < j$ one has the following
vanishing of relative reduced homology 
\begin{equation}
\label{relative-layer-vanishing}
\tilde{H}_i(\Delta^{\langle j \rangle},\Delta^{\langle j+1 \rangle};k)=0.
\end{equation}
\end{enumerate}
\end{proposition}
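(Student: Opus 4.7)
The plan is to derive both implications from the long exact sequence in reduced simplicial homology for the pair $(\Delta^{\langle j\rangle},\Delta^{\langle j+1\rangle})$, which is legitimate since $\Delta^{\langle j+1\rangle}\subseteq\Delta^{\langle j\rangle}$: any face of dimension $\geq j+1$ also has dimension $\geq j$. The segment to exploit is
\begin{equation*}
\tilde{H}_i(\Delta^{\langle j+1\rangle};k) \to \tilde{H}_i(\Delta^{\langle j\rangle};k) \to \tilde{H}_i(\Delta^{\langle j\rangle},\Delta^{\langle j+1\rangle};k) \to \tilde{H}_{i-1}(\Delta^{\langle j+1\rangle};k).
\end{equation*}

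For (a)$\Rightarrow$(b), I would fix indices $-1\leq i<j$ and observe that both flanking absolute homology groups vanish by (a): the left because $i<j\leq\dim\Delta$, the right because $i-1<j+1$. Exactness then forces the middle relative group to be zero. The edge cases where $j+1>\dim\Delta$, or where $i=-1$, are dispatched by the paper's conventions on the degenerate complexes $\varnothing$ and $\{\varnothing\}$ recalled in the remark after Definition~\ref{defn:Cohen-Macaulay}.

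For (b)$\Rightarrow$(a), I would argue by downward induction on $j$. The base case $j>\dim\Delta$ is trivial: $\Delta^{\langle j\rangle}$ has no faces of nonnegative dimension, and its reduced homology vanishes in every degree by those same conventions. For the inductive step at $j\leq\dim\Delta$, assume $\tilde H_\ell(\Delta^{\langle j+1\rangle};k)=0$ for every $-1\leq\ell<j+1$. Then for any $-1\leq i<j$ the left flanking term in the displayed segment vanishes by induction, while the right flanking term vanishes by (b), forcing $\tilde H_i(\Delta^{\langle j\rangle};k)=0$. This closes the induction.

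The only real subtlety I anticipate is bookkeeping the boundary behavior of the sequential layers when they collapse to $\varnothing$ or $\{\varnothing\}$; the paper's conventions are tailored to handle exactly these degenerate cases, after which each direction is essentially a single application of exactness.
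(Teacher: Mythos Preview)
Your proposal is correct and matches the paper's own proof essentially line for line: both directions are obtained from the same four-term portion of the long exact sequence for the pair $(\Delta^{\langle j\rangle},\Delta^{\langle j+1\rangle})$, with (a)$\Rightarrow$(b) killing the two absolute terms flanking the relative one, and (b)$\Rightarrow$(a) handled by descending induction on $j$ (the paper starts the induction at $j=\dim\Delta$ rather than $j>\dim\Delta$, but this is immaterial). Your extra attention to the degenerate cases $\Delta^{\langle j\rangle}\in\{\varnothing,\{\varnothing\}\}$ and $i=-1$ is appropriate and is exactly what the paper's conventions are set up to cover.
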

\begin{remark}
\rm \ \\
It is equivalent in (b) to require the vanishing in \eqref{relative-layer-vanishing} 
to occur for every $i,j \geq -1$ with $i \neq j$:
the complexes $\Delta^{\langle j \rangle},\Delta^{\langle j+1 \rangle}$ have
the same $i$-dimensional  faces for $i>j$ so one always has
the vanishing in \eqref{relative-layer-vanishing} for
$i > j$.
\end{remark}

\begin{proof}
For both implications consider part of
the long exact sequence for the pair 
$(\Delta^{\langle j \rangle},\Delta^{\langle j+1 \rangle})$:
\begin{equation}
\label{long-exact-sequence}
\cdots \rightarrow
H_{i}(\Delta^{\langle j+1\rangle})
\rightarrow 
H_i(\Delta^{\langle j\rangle})
\rightarrow
H_i(\Delta^{\langle j\rangle},\Delta^{\langle j+1\rangle})
\rightarrow
H_{i-1}(\Delta^{\langle j+1\rangle})
\rightarrow \cdots
\end{equation}
\noindent
{\sf (a) implies (b)}:
Assume (a), and let $i,j$ satisfy $-1 \leq i < j$.
Then the second and fourth terms in \eqref{long-exact-sequence}
both vanish due to hypothesis (a), and hence
the third term also vanishes.

\vskip.1in
\noindent
{\sf (b) implies (a)}:
Assume (b), and fix $i \geq -1$.  We prove the vanishing
$H_i(\Delta^{\langle j\rangle};k)=0$ for each $j > i$ asserted
in (a) by descending induction on $j$.

In the base case, $j=\dim \Delta$, so that 
$\Delta^{\langle j+1\rangle} =\varnothing$
and hence
$$
H_i(\Delta^{\langle j\rangle};k)
= H_i(\Delta^{\langle j\rangle},\Delta^{\langle j+1\rangle};k)
=0
$$
by hypothesis (b).  In the inductive step, note
that in \eqref{long-exact-sequence}, the third term
vanishes due to hypothesis (b), while the first one vanishes by the descending induction on $j$.
Hence the second term vanishes.
\end{proof}

\subsection{Proof of Theorem~\ref{thm:main}}

Recall the statement of the theorem.

\vskip.1in
\noindent
{\bf Theorem ~\ref{thm:main}.}
{\it 
Let $P$ be a poset and $\sim $ an equivalence relation on $\Int(P)$, satisfying Axioms
\ref{ax:order-compatibility}, \ref{ax:invariance}, \ref{ax:finiteness}, \ref{ax:concatenation}.  
Let $J$ be a monomial right ideal in $k[P]_\red$.

Then $J$ is a nongraded Koszul right $k[P]_\red$-module
if and only if for every equivalence class $\widetilde{[x,y]}$ in $J$
the poset $(x,y)|_J$ is sequentially acyclic over $k$.
}

\vskip.1in

In its proof, we use a standard reformulation of Koszulness
in terms of $\Tor$;  see e.g. Woodcock \cite[Theorem 2.3]{Woc},
or \cite[Prop. 2.1.3]{BGS}.

\begin{proposition}
\label{prop:Koszulness-via-tor}
For an $\NN$-graded ring $R$ with $R_0$ semisimple,
and $M$ a $\ZZ$-graded right $R$-module bounded from below (i.e. $M_i=0$ for $i\ll 0$), one
has that $M$ is a Koszul $R$-module if and only if
$\Tor^R_i(M,R_0)_j=0$ for every $i,j \in \ZZ$ with $i \neq j$. 
\end{proposition}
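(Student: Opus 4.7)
The proof follows the standard Koszul-duality template, the key ingredient being the existence of a \emph{minimal} graded projective resolution of $M$.

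For the forward direction, assume $M$ admits a Koszul resolution $P^{(\bullet)}\to M$ with each $P^{(i)}=P^{(i)}_i\cdot R$. Then $P^{(i)}\otimes_R R_0\cong P^{(i)}/P^{(i)}\mm$ is a graded $R_0$-module concentrated in degree $i$, since its generators sit in degree $i$ and $\mm$ has positive degrees. The homology of the complex $P^{(\bullet)}\otimes_R R_0$ computes $\Tor^R_\bullet(M,R_0)$, so $\Tor^R_i(M,R_0)_j$, being a subquotient of $(P^{(i)}\otimes_R R_0)_j$, vanishes whenever $j\neq i$.

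For the converse, the plan is to build a minimal graded projective resolution of $M$ step-by-step and use the hypothesis to force each term to be generated in a single degree. For any graded right $R$-module $N$ bounded below, the quotient $N/N\mm$ is a graded right $R_0$-module, and semisimplicity of $R_0$ lets us decompose it as $\bigoplus_{S,d} S(-d)^{\oplus n_{S,d}}$ with $S$ ranging over simple right $R_0$-modules. Each such $S$ lifts to an indecomposable graded projective right $R$-module $P_S$ with $P_S/P_S\mm\cong S$ concentrated in degree $0$; setting $P(N):=\bigoplus P_S(-d)^{\oplus n_{S,d}}$ and lifting $\mathrm{id}_{N/N\mm}$ produces a map $P(N)\to N$ that is surjective with bounded-below kernel, the surjectivity being the graded Nakayama lemma (which is exactly where the bounded-below hypothesis is used). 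Iterating this construction, starting with $N:=M$ and then with the kernel at each stage, yields a graded projective resolution $P^{(\bullet)}\to M$ with the minimality property that each differential factors through $P^{(i-1)}\mm$.

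Minimality kills all induced differentials on $P^{(\bullet)}\otimes_R R_0$, so $\Tor^R_i(M,R_0)\cong P^{(i)}/P^{(i)}\mm$ as graded $R_0$-modules. The hypothesis then gives that $P^{(i)}/P^{(i)}\mm$ is concentrated in degree $i$, so every indecomposable summand of $P^{(i)}$ must be of the form $P_S(-i)$; in particular $P^{(i)}$ is generated in degree $i$, verifying the Koszul condition on $M$. The one nontrivial technical obstacle is the graded projective cover construction itself, which rests decisively on the semisimplicity of $R_0$ (to identify indecomposable projectives with simples of $R_0$) and on the bounded-below hypothesis (to run graded Nakayama); both are classical and appear in standard references such as Beilinson--Ginzburg--Soergel \cite[Prop.~2.1.3]{BGS} and Woodcock \cite[Theorem 2.3]{Woc}.
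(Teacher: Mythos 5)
The paper does not prove this proposition itself; it delegates it to Woodcock \cite[Theorem 2.3]{Woc} and Beilinson--Ginzburg--Soergel \cite[Prop.~2.1.3]{BGS}, and your argument via minimal graded projective resolutions (projective covers built from simples over the semisimple $R_0$, graded Nakayama via the bounded-below hypothesis, minimality killing the induced differentials on $P^{(\bullet)}\otimes_R R_0$) is exactly the standard one underlying those references. The proposal is correct and fills in precisely the content the paper leaves to the citations.
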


\begin{proof}(of Theorem~\ref{thm:main})
\newline
Assume that the pair $(P,\sim)$ satisfies Axioms~\ref{ax:order-compatibility},
~\ref{ax:invariance},~\ref{ax:finiteness},~\ref{ax:concatenation}.
Let $R=\gr_I A$ where $A=k[P]_\red$, and let $J$ be a monomial right ideal of $A$.

By definition, the right $A$-module $J$ is nongraded Koszul if and only if
the graded right $R$-module $\gr_I J$ is a Koszul $R$-module.
By Proposition~\ref{prop:Koszulness-via-tor} and
Theorem~\ref{thm:tor-computation}, 
this occurs if and only if for every equivalence class
$\alpha=\widetilde{[x,y]}$ in $J$,
the order complex $\Delta_\alpha=\Delta( (x,y)|_J)$
satisfies the vanishing condition
$$
\tilde{H}_{i-1}(\Delta_\alpha^{\langle j-1 \rangle},
                  \Delta_\alpha^{\langle j \rangle};k) = 0 \text{ for }i \neq j \geq 0.
$$
Comparison with Proposition~\ref{prop:new-seq-C-M-defn} shows 
that this is equivalent to every  
$\widetilde{[x,y]}$ in $J$
having $(x,y)|_J$ sequentially acyclic over $k$.  
\end{proof}

\subsection{Deduction of Corollary~\ref{cor:main-algebra}}
\label{sec:deducing-main-corollary}

We recall here the statement of the corollary.

\vskip .1in
\noindent
{\bf Corollary~\ref{cor:main-algebra}.}
{\it
Let $P$ be a poset and $\sim $ an equivalence relation on $\Int(P)$, satisfying Axioms
\ref{ax:order-compatibility}, \ref{ax:invariance}, \ref{ax:finiteness}, \ref{ax:concatenation}.  

Then $k[P]_\red$ is a nongraded Koszul ring 
if and only if every open interval $(x,y)$ in $P$
is sequentially Cohen-Macaulay over $k$.
}

\vskip.1in
Corollary~\ref{cor:main-algebra} will be deduced as the special case $J=I$ of
Theorem~\ref{thm:main}.  There are two small observations that allow
this.

The first is a reformulation of the sequential Cohen-Macaulay property
for posets, due to Bj\"orner, Wachs and Welker \cite{BWW}.
Given a poset $P$, let $\hat P := P \sqcup \{ \hat 0, \hat 1\}$
be the bounded poset obtained by adding a new minimum element $\hat 0$ and maximum
element $\hat 1$.

\begin{proposition} (\cite[Corollary 3.5]{BWW})
\label{prop:BWW-poset-criterion}
A poset $P$ has its order complex $\Delta P$ sequential Cohen-Macaulay over
$k$ if and only if every open interval $(x,y)$ in $\hat P$
is sequentially acyclic over $k$.
\end{proposition}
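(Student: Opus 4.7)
The plan is to unfold the definition of sequential Cohen--Macaulayness for $\Delta P$ via the classical ``link decomposition'' identity, which realizes each link of a chain as a join of order complexes of open intervals in $\hat P$, and then handle the two implications separately.

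First I would record the link formula: for any chain $F=\{z_1<\cdots<z_r\}$ in $P$ (with $F=\emptyset$ allowed), setting $z_0:=\hat 0$ and $z_{r+1}:=\hat 1$, one has
\[
\link_{\Delta P}(F) \;=\; \Delta(z_0,z_1)\;*\;\Delta(z_1,z_2)\;*\;\cdots\;*\;\Delta(z_r,z_{r+1}),
\]
where the intervals are taken in $\hat P$. This is immediate from sorting any chain $G$ of $P\setminus F$ that is comparable with all of $F$ into its ``slots'' between consecutive $z_i$'s (and the slot at either end if $r=0$).

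For the forward direction, assume $\Delta P$ is sequentially Cohen--Macaulay and let $(x,y)$ be any open interval of $\hat P$. I would choose saturated chains in $\hat P$ from $\hat 0$ up to $x$ and from $y$ up to $\hat 1$ (refining to covers in the finite bounded poset $\hat P$), and let $F$ consist of all elements of their union lying in $P$. Every gap in the resulting extended chain through $\hat P$ is then a covering relation, except for the gap $(x,y)$ itself. Since $\Delta$ of an empty open interval equals $\{\emptyset\}$ and $\{\emptyset\}*K=K$ for any $K$, the link formula collapses to $\link_{\Delta P}(F)=\Delta(x,y)$, which is therefore sequentially acyclic over $k$.

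For the reverse direction, assume every $\Delta(x,y)$ with $x<y$ in $\hat P$ is sequentially acyclic. By the link formula, each link in $\Delta P$ is an iterated join of such order complexes, so by induction on the number of factors it suffices to prove the following \emph{Join Lemma}: if $X,Y$ are sequentially acyclic over $k$, then so is $X*Y$. This is the main technical obstacle. I would attack it by combining the K\"unneth formula for joins,
\[
\tilde H_n(X*Y;k)\;\cong\;\bigoplus_{p+q=n-1}\tilde H_p(X;k)\otimes_k\tilde H_q(Y;k),
\]
with the combinatorial identity
\[
(X*Y)^{\langle j\rangle}\;=\;\bigcup_{\substack{p+q=j-1\\ p,q\geq -1}}X^{\langle p\rangle}\,*\,Y^{\langle q\rangle},
\]
which follows from the observation that facets of $X*Y$ are joins of facets of $X$ with facets of $Y$, dimensions adding one more than the sum. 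A Mayer--Vietoris induction on the sets indexing this union, using the hypothesis $\tilde H_i(X^{\langle p\rangle})=0$ for $i<p$ and similarly for $Y$, together with K\"unneth applied termwise, should yield $\tilde H_i((X*Y)^{\langle j\rangle};k)=0$ for $-1\le i<j$, completing the argument.
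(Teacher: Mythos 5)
The paper does not give its own proof of this proposition; it is quoted verbatim from Bj\"orner, Wachs, and Welker \cite[Cor.~3.5]{BWW}, so there is no internal argument to compare against. Your overall strategy---realizing links in $\Delta P$ as joins of order complexes of open intervals of $\hat P$, using the saturated-chain trick for the forward implication, and reducing the reverse implication to a Join Lemma (sequential acyclicity is preserved under simplicial join)---is the standard route, and all the pieces you state are correct, including the layer identity $(X*Y)^{\langle j\rangle}=\bigcup_{p+q=j-1}X^{\langle p\rangle}*Y^{\langle q\rangle}$.

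The one place that is not yet a proof is the closing sentence. A ``Mayer--Vietoris induction'' over that cover is more delicate than you indicate: the cover typically has three or more pieces, and pairwise intersections $X^{\langle p_1\rangle}*Y^{\langle q_1\rangle}\cap X^{\langle p_2\rangle}*Y^{\langle q_2\rangle}=X^{\langle\max p_i\rangle}*Y^{\langle\max q_i\rangle}$ are joins at a strictly higher level $p+q>j-1$, so a naive pairwise argument does not obviously close up. You can repair it by ordering the union as a telescope: set $A_p:=X^{\langle p\rangle}*Y^{\langle j-1-p\rangle}$ and $B_r:=\bigcup_{p\geq r}A_p$; then $B_r=B_{r+1}\cup A_r$ and, because the sequential layers of $X$ nest, $B_{r+1}\cap A_r=X^{\langle r+1\rangle}*Y^{\langle j-1-r\rangle}$, a single join. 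K\"unneth and your hypothesis give $\tilde{H}_i(A_r;k)=0$ for $i<j$ and $\tilde{H}_i(B_{r+1}\cap A_r;k)=0$ for $i\leq j$, so downward induction on $r$ yields $\tilde{H}_i(B_{-1};k)=\tilde{H}_i((X*Y)^{\langle j\rangle};k)=0$ for $i<j$. A slicker alternative, very much in the spirit of this paper's Proposition~\ref{prop:new-seq-C-M-defn}, avoids Mayer--Vietoris entirely: the relative chain complex $\tilde{C}_*((X*Y)^{\langle j\rangle},(X*Y)^{\langle j+1\rangle};k)$ is spanned by joined faces $(F,G)$ whose largest containing face has dimension exactly $j$, the induced boundary preserves the pair of ``depths'' $\bigl(d_X(F),d_Y(G)\bigr)$ with $d_X(F)+d_Y(G)=j-1$, and so this relative complex splits as a direct sum of chain complexes $\bigoplus_{p+q=j-1}\tilde{C}_*(X^{\langle p\rangle},X^{\langle p+1\rangle};k)\otimes_k\tilde{C}_*(Y^{\langle q\rangle},Y^{\langle q+1\rangle};k)$ (with the usual join degree shift); applying K\"unneth summandwise together with the relative reformulation of sequential acyclicity finishes the Join Lemma in one stroke.
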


The second observation is a standard 
argument.

\begin{proposition}
\label{prop:dimension-shifting}
Let $A$ be a ring with a decomposition
$A=A_0 \oplus I$ in which $A_0$ is a semisimple ring and
$I$ a two-sided ideal.

Then $A$ is a nongraded Koszul ring if and only $I$ is a nongraded
Koszul right $A$-module.
\end{proposition}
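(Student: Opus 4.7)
The plan is to translate both nongraded Koszul conditions into \emph{graded} Koszul conditions over the associated graded ring $R := \gr_I A$, and then to derive the equivalence by dimension-shifting in $\Tor$. Unwinding the definition of nongraded Koszulness, $A$ is a nongraded Koszul ring precisely when $\gr_I(A/I) = R_0$ (concentrated in degree $0$) is a graded Koszul right $R$-module, and $I$ is a nongraded Koszul right $A$-module precisely when $\gr_I I$ is a graded Koszul right $R$-module. By Proposition~\ref{prop:Koszulness-via-tor}, each condition is equivalent to the corresponding diagonal vanishing of Tor: $\Tor^R_i(-, R_0)_j = 0$ whenever $i \neq j$, with $-$ equal to $R_0$ or $\gr_I I$ respectively.

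The main computation then identifies $\gr_I I$ with the augmentation ideal $\mm := \bigoplus_{d \geq 1} R_d$ of $R$, up to a single degree shift: inspecting components gives $(\gr_I I)_d = I \cdot I^d / I \cdot I^{d+1} = I^{d+1}/I^{d+2} = \mm_{d+1}$, so that $\mm$ is isomorphic to $\gr_I I$ with its grading shifted up by $1$. I would then apply $\Tor^R(-, R_0)$ to the graded short exact sequence of right $R$-modules
\[
0 \longrightarrow \mm \longrightarrow R \longrightarrow R_0 \longrightarrow 0
\]
and use that $R$ is $R$-projective, so $\Tor^R_i(R, R_0) = 0$ for $i \geq 1$. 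The associated long exact sequence produces degree-preserving isomorphisms $\Tor^R_{i+1}(R_0, R_0) \cong \Tor^R_i(\mm, R_0)$ for $i \geq 1$; accounting for the shift between $\mm$ and $\gr_I I$, this becomes $\Tor^R_{i+1}(R_0, R_0)_{j+1} \cong \Tor^R_i(\gr_I I, R_0)_j$ for $i \geq 1$. The low-homological terms $\Tor^R_0(R_0, R_0)$, $\Tor^R_1(R_0, R_0) \cong \mm/\mm^2 = R_1$, and $\Tor^R_0(\gr_I I, R_0) = (\gr_I I)/(\gr_I I)\mm$ all lie in their correct diagonal degrees by inspection, so the two diagonal-vanishing conditions coincide, which is the desired equivalence.

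The main obstacle is little more than careful bookkeeping for the degree shift between $\mm$ and $\gr_I I$: though they agree as ungraded right $R$-modules, their natural gradings differ by one, and mishandling this shift would exchange diagonal vanishing for off-diagonal vanishing, breaking the Koszul reformulation.
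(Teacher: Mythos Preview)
Your approach is essentially identical to the paper's: translate to the graded ring $R=\gr_I A$, identify $\gr_I I$ with the irrelevant ideal $\mm$ up to a degree shift, and compare the diagonal $\Tor$-vanishing conditions via the suspension isomorphism coming from $0\to\mm\to R\to R_0\to 0$.

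One small slip in your handling of the low homological degrees: the assertions $\mm/\mm^2=R_1$ and $(\gr_I I)/(\gr_I I)\mm$ concentrated in degree~$0$ are \emph{not} automatic---they hold precisely when $R$ is generated over $R_0$ in degree~$1$, which is part of what is at stake, not something available ``by inspection.'' The easy fix (and what the paper does) is to observe that the suspension isomorphism $\Tor^R_{i+1}(R_0,R_0)\cong\Tor^R_i(\mm,R_0)$ in fact holds already for $i\geq 0$, since in the long exact sequence the map $\Tor_0^R(R,R_0)\to\Tor_0^R(R_0,R_0)$ is an isomorphism; then only $\Tor_0^R(R_0,R_0)=R_0$ remains to be checked, and that one genuinely is concentrated in degree~$0$. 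Alternatively, you could simply note that your two remaining low-degree conditions are \emph{equivalent to each other} via the shift (both say $\mm/\mm^2$ sits in a single degree), so neither needs to hold unconditionally.
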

\begin{proof}
Note that when one decomposes the associated graded ring $\gr_I A$ as
$$
\gr_I A = \underbrace{A/I}_{A_0:=} \oplus 
            \underbrace{I/I^2 \oplus I^2/I^3 \oplus \cdots}_{\mm:=},
$$
the irrelevant ideal $\mm$ regarded as a right $\gr_I A$-module
is isomorphic to the right $\gr_I A$-module $(\gr_I I)(-1)$,
that is, its degree zero homogeneous component $I/I^2$ lives in degree one
of $\mm$.
This leads to the following equivalences, justified below:
$$
\begin{aligned}
&I\text{ is a nongraded Koszul right }A\text{-module } \\
&\overset{(1)}{\Leftrightarrow} M:=\gr_I I\text{ is a Koszul right }\gr_I A\text{-module } \\
&\overset{(2)}{\Leftrightarrow} \Tor_i^{\gr_I A}(\gr_I I,A_0)_j = 0 \text{ for }j \neq i\\
&\overset{(3)}{\Leftrightarrow} \Tor_i^{\gr_I A}(\mm,A_0)_j = 0 \text{ for }j \neq i+1\\
&\overset{(4)}{\Leftrightarrow} \Tor_i^{\gr_I A}(A_0,A_0)_j = 0 \text{ for }j \neq i\\
&\overset{(5)}{\Leftrightarrow} A_0:=A/I\text{ is a Koszul right }\gr_I A\text{-module } \\
&\overset{(6)}{\Leftrightarrow} A\text{ is a nongraded Koszul ring}.\\
\end{aligned}
$$
Equivalences (1), (6) are by definition, while equivalences (2), (5) use
Proposition~\ref{prop:Koszulness-via-tor}.  Equivalence (3) uses the
observation that $\mm=(\gr_I I)(-1)$ from above.  

This leaves only equivalence (4).  For $i\geq 0$, this 
follows from the usual suspension isomorphism
\begin{equation}
\label{suspension-isomorphism}
\Tor_i^{R}(\mm,R_0) \cong \Tor^R_{i+1}(R_0,R_0)
\end{equation}
for graded rings $R$, that comes from the long exact sequence
in $\Tor^R(-,R_0)$ associated to the short exact sequence 
$$
0 \rightarrow \mm \rightarrow R \rightarrow R_0 \rightarrow 0.
$$
To deduce the $i=-1$ case of equivalence (4), note that 
$\Tor_{-1}^{\gr_I A}(\mm,A_0)  = 0$ while 
$$
\Tor_0^{\gr_I A}(A_0,A_0)_j=(A_0 \otimes_{\gr_I A} A_0)_j=(A_0)_j=0 \text{  for }j \neq 0.
$$
\end{proof}

\begin{proof}(of Corollary~\ref{cor:main-algebra})

Proposition~\ref{prop:dimension-shifting} says that
$k[P]_\red$ is a nongraded Koszul ring if and only if
$I$ is a nongraded Koszul right $k[P]_\red$-module.
Taking $J=I$ in Theorem~\ref{thm:main}, this is equivalent
to the poset $P$ having every open interval $(x,y)$ sequentially
acyclic over $k$.   Comparing with
Proposition~\ref{prop:BWW-poset-criterion}, and noting that
every open interval in the bounded poset 
$[x,y]=\widehat{(x,y)}$ is an open interval in $P$ itself,
one concludes that this is equivalent to every open interval
$(x,y)$ in $P$ being sequentially Cohen-Macaulay over $k$.
\end{proof}

\begin{remark}
\rm \ \\
As observed in the Introduction, Theorems~\ref{thm:Polo-Woodcock}
and \ref{thm:PRS} are simply the special cases of Corollary~\ref{cor:main-algebra}
in which $P$ is either a graded finite poset with $\sim$ the trivial equivalence
relation on its intervals, or 
$P$ is the poset structure on a graded pointed affine semigroup $\Lambda$ and
$\sim$ is the equivalence relation
\eqref{semigroup-equivalence} on its intervals.
\end{remark}

\begin{remark}
\rm \ \\
The case $J=I$ in Theorem~\ref{thm:tor-computation} along with
the suspension isomorphism \eqref{suspension-isomorphism} immediately implies
the following generalization of Theorems~\ref{thm:Polo-Woodcock} and
\ref{thm:PRS}.
\end{remark}

\begin{corollary}
\label{cor:ring-tor}
Let $P$ be a poset and $\sim $ an equivalence relation on $\Int(P)$, 
satisfying Axioms \ref{ax:order-compatibility}, \ref{ax:invariance}, 
\ref{ax:finiteness}, \ref{ax:concatenation}. 
Let $R$ be the associated graded ring $R=\gr_I A$ for 
$A=k[P]_\red$.

Then 
$\Tor_0^R(k_{\tilde{x}},k_{\tilde{y}}) = k$ is concentrated in degree $0$ if
$\tilde{x}=\tilde{y}$, and vanishes otherwise, 
while for $i \geq 1$ one has
$$
\Tor_i^R(k_{\tilde{x}},k_{\tilde{y}})_j 
\cong
\bigoplus_{\substack{\alpha \in \Int(P)/\!\sim \\ s(\alpha)=\tilde{x} \\ t(\alpha)=\tilde{y}}} 
\tilde{H}_{i-2}( \Delta_\alpha^{\langle j-2 \rangle},
\Delta_\alpha^{\langle j-1 \rangle} ; k ).
$$
Here $\Delta_\alpha$ is the order complex of the open interval $(x(\alpha),y(\alpha))$,
where $[x(\alpha),y(\alpha)]$ is the representative  of $\alpha$ fixed in \eqref{eqn:xalpha-yalpha}.
\end{corollary}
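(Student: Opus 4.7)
The plan is to derive this corollary as a direct consequence of Theorem~\ref{thm:tor-computation} applied to the two-sided ideal $J = I$, combined with the suspension isomorphism~\eqref{suspension-isomorphism} from the proof of Proposition~\ref{prop:dimension-shifting}, and with the idempotent decomposition $A_0 = \bigoplus_{\tilde x \in P/\sim} k_{\tilde x}$ supplied by Proposition~\ref{prop:finiteness}.

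First I would dispose of the base case $i = 0$ by direct inspection: since the irrelevant ideal $\mm$ annihilates $k_{\tilde y}$, the tensor product $k_{\tilde x} \otimes_R k_{\tilde y}$ collapses to $k_{\tilde x} \otimes_{A_0} k_{\tilde y}$, and the mutual orthogonality of the idempotents $\xi_{\tilde x}$ gives $k$ concentrated in degree $0$ when $\tilde x = \tilde y$ and $0$ otherwise.

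For $i \geq 1$ the argument proceeds in three small steps. First, the short exact sequence $0 \to \mm \to R \to A_0 \to 0$ together with the $R$-projectivity of $R$ yields the suspension isomorphism $\Tor_i^R(\mm, A_0) \cong \Tor_{i+1}^R(A_0, A_0)$ for all $i \geq 0$; by naturality with respect to left-multiplication by $\xi_{\tilde x}$ on $\mm$ and right-multiplication by $\xi_{\tilde y}$ on $A_0$, this refines bimodule-componentwise to
\[
\Tor_i^R(\xi_{\tilde x}\mm, k_{\tilde y}) \cong \Tor_{i+1}^R(k_{\tilde x}, k_{\tilde y}).
\]
Second, I would identify $\mm$ with a grading shift of $\gr_I I$: since $\mm_d = I^d/I^{d+1} = (\gr_I I)_{d-1}$ for $d \geq 1$, one has $\mm \cong (\gr_I I)(-1)$ as graded right $R$-modules, so that $\Tor_i^R(\xi_{\tilde x}\mm, k_{\tilde y})_j \cong \Tor_i^R(\xi_{\tilde x}\gr_I I, k_{\tilde y})_{j-1}$. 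Third, I would invoke Theorem~\ref{thm:tor-computation} for the right ideal $J = I$, observing that every $z \in (x,y)$ satisfies $\widetilde{[x,z]} \in I$ and hence $(x,y)|_I = (x,y)$, so that $\Delta_\alpha$ is simply the order complex of the full open interval $(x(\alpha),y(\alpha))$. Chasing the three indices and reindexing $i' = i+1$ then reproduces the claimed formula.

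I do not anticipate any serious obstacle: the remark preceding the corollary already signals that the proof is essentially formal. The only bookkeeping that must be carried out carefully is the grading shift between $\mm$ and $\gr_I I$, and a minor check that the diagonal classes $\alpha$ with $x(\alpha) = y(\alpha)$ (which do not lie in $I$) may be added freely to the index set on the right-hand side without affecting the sum, since for such $\alpha$ one has $\Delta_\alpha = \varnothing$ and all relative reduced homology groups vanish.
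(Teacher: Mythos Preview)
Your proposal is correct and follows essentially the same approach the paper takes: the remark preceding the corollary indicates that it is an immediate consequence of Theorem~\ref{thm:tor-computation} applied with $J=I$ together with the suspension isomorphism~\eqref{suspension-isomorphism}, and this is exactly the route you describe. Your write-up simply unpacks the details that the paper leaves implicit (the $i=0$ base case, the grading shift $\mm \cong (\gr_I I)(-1)$, the idempotent refinement of the suspension isomorphism, the identification $(x,y)|_I = (x,y)$, and the harmless inclusion of the trivial classes $\alpha$ with $x(\alpha)=y(\alpha)$ in the index set).
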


\subsection{Deduction of Theorem~\ref{thm:EHHRW}}
\label{sec:deducing-EHHRW}

We recall here the statement of the theorem.

\vskip.1in
\noindent
{\bf Theorem~\ref{thm:EHHRW}}
{\it
The Stanley-Reisner ideal $I_\Delta$ in $A=k[x_1,\ldots,x_n]$
has componentwise linear resolution as an $A$-module if and only if 
the Alexander dual complex $\Delta^\vee$ is sequentially Cohen-Macaulay over $k$.
}

\vskip.1in

As explained in Section~\ref{sec:unification} of the Introduction, to deduce
Theorem~\ref{thm:EHHRW} from Theorem~\ref{thm:main},
one should consider $\Lambda=\NN^n$ as an affine semigroup, 
with $P=\Lambda$ and $\sim$ the equivalence relation \eqref{semigroup-equivalence}.
Here 
$$
A=k[P]_\red=k[x_1,\ldots,x_n]= \gr_I A.
$$

Fix a Stanley-Reisner ideal $J=I_\Delta$ in $A$, associated to the simplicial
complex $\Delta$, and Alexander dual complex $\Delta^\vee$.
As explained in Section~\ref{sec:unification}, it only remains to prove
the following.

\begin{proposition}
In the above setting, 
all links of faces in $\Delta^\vee$ are sequentially acyclic over
$k$ if and only if
all subposets $(x,y)|_J$ for $\widetilde{[x,y]}$ in $J$
are sequentially acyclic over $k$.
\end{proposition}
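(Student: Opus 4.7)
The plan is to analyze each representative class $\widetilde{[x,y]} \in J = I_\Delta$ by first normalizing via translation-invariance \eqref{semigroup-equivalence} to $[0,u]$ for some $u\in\NN^n$ with $\mathrm{supp}(u)\notin\Delta$, and then splitting on whether $u$ is squarefree. In both cases
\[
(0,u)|_J = \{v\in\NN^n: 0<v<u \textrm{ componentwise},\ \mathrm{supp}(v)\notin\Delta\}.
\]

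In the squarefree case $u=\mathbf{1}_S$ with $S=\mathrm{supp}(u)$, set $F:=[n]\setminus S\in\Delta^\vee$. Every $v\in(0,u)|_J$ is itself a $0/1$-vector, so the order-reversing assignment $v\mapsto [n]\setminus\mathrm{supp}(v)$ identifies $(0,u)|_J$ with the subposet $\{G\in\Delta^\vee: G\supsetneq F\}$, i.e., the face poset of $\link_{\Delta^\vee}(F)$ minus its empty face. The order complex of the latter is the barycentric subdivision $\mathrm{sd}(\link_{\Delta^\vee}(F))$. To transfer sequential acyclicity through this identification, I would verify the purely complex-theoretic statement that $\Sigma$ and $\mathrm{sd}(\Sigma)$ have the same sequential acyclicity for any simplicial complex $\Sigma$. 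The key identity is $\mathrm{sd}(\Sigma)^{\langle k\rangle}=\mathrm{sd}(\Sigma^{\langle k\rangle})$: a chain $F_0\subsetneq\cdots\subsetneq F_j$ of non-empty faces of $\Sigma$ extends to a chain of length $k+1$ iff its top element $F_j$ is contained in a face of $\Sigma$ of dimension $\geq k$. Combined with the classical homeomorphism $|\Sigma^{\langle k\rangle}|\cong|\mathrm{sd}(\Sigma^{\langle k\rangle})|$, this yields $\tilde H_i(\Sigma^{\langle k\rangle};k)\cong\tilde H_i(\mathrm{sd}(\Sigma)^{\langle k\rangle};k)$ for all $i,k$, and hence the same sequential acyclicity.

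The hard part is the non-squarefree case: when $u_i\geq 2$ for some $i$, I must show $(0,u)|_J$ is sequentially acyclic regardless of $\Delta$. In this case $u-e_i\in(0,u)|_J$ since $\mathrm{supp}(u-e_i)=\mathrm{supp}(u)\notin\Delta$, and the map
\[
\pi(v)=\begin{cases} v, &v_i\leq u_i-1,\\ v-e_i, & v_i=u_i,\end{cases}
\]
is an order-preserving closure operator onto the subposet $Q:=\{v\in(0,u)|_J: v_i\leq u_i-1\}$, whose maximum is $u-e_i$. Thus $\Delta(Q)$ is a cone and all its sequential layers $\Delta(Q)^{\langle k\rangle}=(u-e_i)*\Delta(Q\setminus\{u-e_i\})^{\langle k-1\rangle}$ are cones, hence contractible. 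Bj\"orner's closure-operator lemma already gives $|\Delta((0,u)|_J)|\simeq|\Delta(Q)|$, so the total complex is contractible; the technical obstacle is to upgrade this to contractibility of each sequential layer. I would address this via a discrete Morse matching on the face poset of $\Delta((0,u)|_J)$ that pairs each chain $\sigma$ with $\sigma\cap\{v:v_i=u_i\}\neq\emptyset$ to $\sigma\triangle\{v-e_i\}$, where $v$ denotes the minimum of $\sigma\cap\{v:v_i=u_i\}$, verifying acyclicity of the matching and that it leaves only chains in $Q$ as critical cells, while respecting the filtration by sequential layers. Combining the two cases yields both directions: squarefree $u$ realize exactly the links of faces of $\Delta^\vee$, while non-squarefree $u$ are automatic.
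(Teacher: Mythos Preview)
Your outline is sound and can be completed, but in the non-squarefree case you take a genuinely different route from the paper.

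In the squarefree case you argue exactly as the paper does: identify $(0,u)|_J$ (anti-)isomorphically with the poset of faces of $\Delta^\vee$ strictly containing $F=[n]\setminus S$, hence with the nonempty faces of $\link_{\Delta^\vee}(F)$, so that the order complex is the barycentric subdivision of that link. Your explicit check that $\mathrm{sd}(\Sigma)^{\langle k\rangle}=\mathrm{sd}(\Sigma^{\langle k\rangle})$ is a nice detail that the paper only asserts.

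For non-squarefree $u$, the paper does \emph{not} use $u-e_i$ or a closure operator. Instead it takes the cone vertex $v=\sqrt{x^\alpha}$ and applies the Contractible Carrier Lemma: for each chain $c$ in the $j$-th sequential layer $\Delta'$, the distributive sublattice $L(c,v)\subseteq[1,x^\alpha]$ generated by $c$ and $v$ has order complex $\Delta_c$ that is a pure shellable ball of dimension at least $j$, contained in $\Delta'$, and containing both $c$ and $v$. These balls are contractible carriers for both $1_{\Delta'}$ and the constant map to $v$, so $\Delta'$ is contractible. This handles all sequential layers in one stroke.

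Your Morse approach is a legitimate alternative, but the phrase ``respecting the filtration by sequential layers'' hides the real content. Three points must be checked: (i) acyclicity of the matching (along any gradient path the removed element must be $v_\sigma$ itself, so $v_\sigma$ strictly increases and no cycle is possible); (ii) a matched pair $\sigma$, $\sigma\cup\{v-e_i\}$ lies in the same $\Delta^{\langle j\rangle}$, which holds because minimal elements of $(0,u)|_J$ are $0/1$-vectors and $u_i\ge 2$, so the minimal elements below $\min\sigma$ and below $\min(\sigma\cup\{v-e_i\})$ coincide, and the maximal-chain length through a chain in this filter depends only on these; (iii) the critical cells lying in $\Delta^{\langle j\rangle}$ form a subcomplex that is a cone on $u-e_i$. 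None of these is hard, but they are not automatic, and the paper's carrier argument avoids them entirely.
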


\begin{proof}
First note that any interval $[x,y]$ in $P=\NN^n$
is equivalent to an interval $[1,x^\alpha]$ 
(under the divisibility ordering) for a unique monomial 
$
x^\alpha= x_1^{\alpha_1} \cdots x_n^{\alpha_n}.
$
Furthermore, $\widetilde{[x,y]}=\widetilde{[1,x^\alpha]}$ 
lies in $J$ if and only if $x^\alpha$ lies in the monomial ideal $I_\Delta$.
Denote by  
$$
\sqrt{x^\alpha} : = \prod_{i: \alpha_i > 0} x_i
$$
the unique squarefree monomial having the same variable support
as $x^\alpha$.

We analyze sequential layers of the poset $(1,x^\alpha)|_J$ 
based on two cases for $\sqrt{x^\alpha}$.

\vskip.1in
\noindent
{\sf Case 1.} $x^\alpha=\sqrt{x^\alpha}$, that is, $x^\alpha$ is squarefree.

In this case, since $x^\alpha$ lies in $J=I_\Delta$, one must have
$x^\alpha=\prod_{i \in [n] \setminus F} x_i$ for a unique 
face of $\Delta^\vee$.  
Furthermore, every monomial $x^{\alpha'}$ 
lying in $(1,x^\alpha)|_J$ is also squarefree, and hence
of the form $x^{\alpha'}=\prod_{i \in [n] \setminus F'} x_i$
for a unique face $F'$ of $\Delta^\vee$ that contains $F$.
Therefore the map sending $x^{\alpha'}$ to $F'\setminus F$
is a poset isomorphism from $(1,x^\alpha)|_J$ to the poset
of nonempty faces of $\link_{\Delta^\vee}(F)$.  Consequently,
the order complex $\Delta( (1,x^\alpha)|_J )$ is simplicially
isomorphic to the barycentric subdivision of $\link_{\Delta^\vee}(F)$.
For every $j$, this also induces a simplicial isomorphism
between their $j^{th}$ sequential layers, so that sequential
acyclicity for the two are equivalent.

\vskip.1in
\noindent
{\sf Case 2.} $x^\alpha \neq \sqrt{x^\alpha}$.

In this case, we wish to show that the $j^{th}$ sequential layer
$\Delta'$ of the order complex $\Delta((1,x^\alpha)|_J)$
is contractible.  Essentially, we will show that it is
star-shaped with respect to the vertex $v:=\sqrt{x^\alpha}$.  

A face of $\Delta'$ is a chain $c$ in $(1,x^\alpha)|_J$
that can be extended to a chain with at least $j+1$ elements.
The interval $[1,x^\alpha]$ under divisibility is a 
{\it distributive lattice}, and hence this chain $c$ together
with the element $v=\sqrt{x^\alpha}$ generates $L(c,v)$ a distributive 
sublattice of $[1,x^\alpha]$.  Let $\Delta_c$ denote the
order complex of the subposet $L(c,v) \cap (1,x^\alpha)|_J$.
We claim that the complex $\Delta_c$, which obviously contains
both the simplex $c$ and the vertex $v$, 
\begin{enumerate}
\item[$\bullet$] triangulates a ball (of dimension at least $j$), and
\item[$\bullet$] is a subcomplex of $\Delta'$.
\end{enumerate}
These two claims would imply that $\Delta'$ is contractible:
the family of balls $\Delta_c$ would provide contractible
carriers for the identity map $1_{\Delta'}$ and the constant
map $f: \Delta' \rightarrow \{v\}$, showing that they are homotopic,
by the Contractible Carrier Lemma (see e.g. 
\cite[Lemma 10.1]{Bj-top},
\cite[Chapter 3, \S2.4]{BGMW}).

To see the two claims, start by noting that the bottom element $x^{\beta}$ on the chain
$c$ must lie in $J$ and must divide $x^\alpha$.  Therefore 
$\sqrt{x^\beta}$ also lies in $J$, and divides $v=\sqrt{x^\alpha}$,
so it provides a minimum element of $L(c,v)$.  Since this
minimum element of $L(c,v)$ lies in $(1,x^\alpha)|_J$,
the rest of the distributive lattice $L(c,v)$ lies in $(1,x^\alpha)|_J$.
Therefore the poset $L(c,v) \cap (1,x^\alpha)|_J$ is either 
\begin{enumerate}
\item[$\bullet$] 
all of $L(c,v)$, if $x^\alpha$ is not in $L(c,v)$, or
\item[$\bullet$]
$L(c,v)$ with its top element removed, if $x^\alpha$ lies in 
$L(c,v)$.  
\end{enumerate}
In either case, the order complex $\Delta_c$ triangulates a ball:
a distributive lattice is pure shellable \cite{Bj} and has each
codimension one face lying in at most two facets, so that its proper
part triangulates a ball or sphere \cite[Proposition 4.7.22]{BLSWZ},
and when one puts back its bottom and/or top element, it triangulates a ball.
This ball contains the chain $c$ and hence has dimension at least $j$.
Since this ball is pure of dimension at least $j$, and it is also a subcomplex of $(1,x^\alpha)|_J$,
it is a subcomplex of the $j^{th}$ sequential layer $\Delta'$.
\end{proof}

\section{Some Koszul numerology}
\label{sec:numerology}

We discuss in this context how some numerology of Koszul rings
interacts with some numerology of sequentially
Cohen-Macaulay complexes.

The sequential Cohen-Macaulay numerology is part
of the $f$- and $h$-triangles of a simplicial complex introduced by Bj\"orner and 
Wachs \cite[Definition 3.1]{BW}, which we recall here.

\begin{definition}
\rm \ \\
For a finite simplicial complex $\Delta$, let $f_{i,j}$
denote the number of simplices $F$ in $\Delta$
having $j$ vertices, and for which the largest-dimensional
face containing $F$ has $i$ vertices.
Define
$$
h_{i,j}:=\sum_{k=0}^j (-1)^{j-k} \binom{i-k}{j-k} f_{i,k}.
$$
\end{definition}

It turns out that when $\Delta$ is sequentially Cohen-Macaulay
over any field, each entry $h_{i,j}$ is nonnegative,
having various algebraic/homological interpretation; see e.g.,
\cite[Corollary 6.2]{Duv}.  
We will be mainly interested here in a simple interpretation for its diagonal
entries $h_{i,i}$ that holds under the weaker assumption of sequential
acyclicity.

\begin{proposition}
\label{prop:h-triangle-diagonal}
When $\Delta$ is sequentially acyclic over $k$,
one has 
$$
h_{i,i}(\Delta) 
= \dim_k \tilde{H}_{i-1}(
            \Delta^{\langle i-1 \rangle}, \Delta^{\langle i \rangle}
          ;k).
$$
\end{proposition}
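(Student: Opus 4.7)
The plan is to interpret both sides of the claimed identity in terms of the relative chain complex of the pair $(\Delta^{\langle i-1 \rangle},\Delta^{\langle i \rangle})$, then identify them via the Euler characteristic.

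First I would unpack $f_{i,k}$. A $(k-1)$-dimensional face $F$ of $\Delta$ whose largest containing face has exactly $i$ vertices is, by definition of the sequential layers, a face lying in $\Delta^{\langle i-1\rangle}$ but not in $\Delta^{\langle i \rangle}$ (it extends to a face of dimension $i-1$ but not to one of dimension $i$). Consequently
\[
f_{i,k} \;=\; \dim_k C_{k-1}\bigl(\Delta^{\langle i-1\rangle},\Delta^{\langle i \rangle};k\bigr),
\]
where we use the reduced relative chain groups (so that $k=0$ corresponds to the empty face). Note also that $C_{\ell}(\Delta^{\langle i-1\rangle},\Delta^{\langle i \rangle};k)=0$ for $\ell \geq i$, since any face of dimension $\geq i$ automatically lies in $\Delta^{\langle i \rangle}$.

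Next I would rewrite the diagonal entry. Since $\binom{i-k}{i-k}=1$, the definition gives
\[
h_{i,i} \;=\; \sum_{k=0}^{i} (-1)^{i-k} f_{i,k}
\;=\; (-1)^{i-1}\sum_{\ell=-1}^{i-1}(-1)^{\ell}\dim_k C_{\ell}\bigl(\Delta^{\langle i-1\rangle},\Delta^{\langle i \rangle};k\bigr),
\]
after the re-index $\ell=k-1$. The sum on the right is exactly the reduced Euler characteristic $\tilde\chi(\Delta^{\langle i-1\rangle},\Delta^{\langle i \rangle};k)$, which equals $\sum_{\ell \geq -1} (-1)^\ell \dim_k \tilde{H}_\ell(\Delta^{\langle i-1\rangle},\Delta^{\langle i \rangle};k)$.

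Finally I would invoke Proposition~\ref{prop:new-seq-C-M-defn} with $j=i-1$: sequential acyclicity of $\Delta$ forces $\tilde H_\ell(\Delta^{\langle i-1\rangle},\Delta^{\langle i \rangle};k)=0$ for all $\ell \neq i-1$. Thus the Euler characteristic collapses to a single term,
\[
\tilde\chi\bigl(\Delta^{\langle i-1\rangle},\Delta^{\langle i \rangle};k\bigr) \;=\; (-1)^{i-1}\dim_k \tilde H_{i-1}\bigl(\Delta^{\langle i-1\rangle},\Delta^{\langle i \rangle};k\bigr),
\]
and the two factors of $(-1)^{i-1}$ cancel, yielding the asserted formula. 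The only real care needed is in the sign bookkeeping and in the convention that $f_{i,0}$ contributes the empty face in the reduced relative chain complex; once these are pinned down, the argument is purely formal, and I would not expect any genuine obstacle beyond this bookkeeping.
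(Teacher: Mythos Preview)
Your proof is correct and follows essentially the same argument as the paper: identify $f_{i,k}$ with the dimension of the relative reduced chain group $\tilde C_{k-1}(\Delta^{\langle i-1\rangle},\Delta^{\langle i\rangle};k)$, recognize $h_{i,i}$ as $(-1)^{i-1}$ times the relative Euler characteristic, and then use Proposition~\ref{prop:new-seq-C-M-defn} (together with the trivial vanishing in degrees $\geq i$) to collapse the alternating sum of homology dimensions to the single term in degree $i-1$. Your bookkeeping is slightly more explicit than the paper's, but the route is the same.
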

\begin{proof}
Note that $f_{i,j}$ counts the number of $(j-1)$-dimensional
faces of $\Delta^{\langle i-1 \rangle}$ that do not
lie in $\Delta^{\langle i \rangle}$.  Hence
$$
f_{i,j}
=\dim_k \tilde{C}_{j-1}(\Delta^{\langle i-1 \rangle}, 
                            \Delta^{\langle i \rangle};k ).
$$
and therefore
$$
\begin{aligned}
(-1)^{i-1} h_{i,i}
&= \sum_{j \geq 0} (-1)^{j-1} f_{i,j} \\
&= \sum_{j \geq 0} (-1)^{j-1} 
\dim_k \tilde{C}_{j-1}(\Delta^{\langle i-1 \rangle}, 
                         \Delta^{\langle i \rangle};k) \\
&=\sum_{j \geq 0} (-1)^{j-1} 
\dim_k \tilde{H}_{j-1}(\Delta^{\langle i-1 \rangle}, 
                           \Delta^{\langle i \rangle};k) \\
&= (-1)^{i-1} \dim_k 
\tilde{H}_{i-1}(\Delta^{\langle i-1 \rangle}, 
                  \Delta^{\langle i \rangle};k),
\end{aligned}
$$
where the last equality uses Proposition~\ref{prop:new-seq-C-M-defn}.
\end{proof}

The Koszul numerology requires an extra finiteness condition on the poset $P$ and
equivalence relation $\sim$, stronger than Axiom~\ref{ax:finiteness}:

\vskip.1in
\noindent
{\bf Axiom~\ref{ax:finiteness}${}^{+}$.}
{\it
For every integer $\ell \geq 0$, there are only
finitely many equivalence classes $\alpha$ in $\Int(P)/\!\!\sim$
with $\ell(\alpha)=\ell$.
}

\vskip.1in
\noindent
When this axiom holds, set $R:=\gr_I A$ for $A=k[P]_\red$,
and define a matrix 
$P(t)$ in $\ZZ[[t]]^{P/\!\sim \,\, \times \,\,  P/\!\sim}$
whose $(\tilde{x},\tilde{y})$-entry is
$$
P_{\tilde{x},\tilde{y}}(t)
:= \sum_i \dim_k \left(\xi_{\tilde{x}} R \xi_{\tilde{y}} \right)_i t^i
= \sum_{\substack{\alpha \in \Int(P)/\!\sim:\\ 
        s(\alpha) =\tilde{x}, \\t(\alpha) =\tilde{y}}} t^{\ell(\alpha)}.
$$

We will also need to assume the {\it $\Tor$-finiteness condition}
\begin{equation}
\label{Tor-finiteness-condition}
\dim_k \Tor_i^{R}(R_0,R_0) < \infty \text{ for all }i.
\end{equation}
Beilinson, Ginzburg and Soergel explain \cite[\S 2.11]{BGS} how condition
\eqref{Tor-finiteness-condition}
follows whenever $R$ is left-Noetherian as a $k$-algebra,
which one can easily check holds at least in our main 
examples:  $(P,\sim)$ with $P$ finite and $\sim$ trivial,
or $P=\Lambda$ a pointed affine semigroup and $\sim$ as in 
\eqref{semigroup-equivalence}.  
When condition \eqref{Tor-finiteness-condition} holds, define another matrix
$Q(t)$ in $\ZZ[[t]]^{ P/\! \sim \,\, \times \,\, P/\! \sim}$
whose $(\tilde{x},\tilde{y})$-entry is
$$
Q_{\tilde{x},\tilde{y}}(t) 
:= \sum_i \dim_k \Tor^R_i(k_{\tilde{x}},k_{\tilde{y}}) \,\, t^i .\\
$$
Using Corollary~\ref{cor:ring-tor}, one can rephrase this as
saying that $Q_{\tilde{x},\tilde{x}}(t):=1$, and for $\tilde{x} \neq \tilde{y}$, 
one has
$$
Q_{\tilde{x},\tilde{y}}(t) 
:=\sum_{\substack{\alpha \in \Int(P)/\! \sim:\\ 
        s(\alpha) = \tilde{x}, \\t(\alpha) = \tilde{y}}} \qquad
  \sum_{i,j \geq 1} \dim_k \tilde{H}_{i-2}(\Delta_\alpha^{\langle j-2 \rangle},
                                 \Delta_\alpha^{\langle j-1 \rangle};k) \,\, t^i.
$$

\begin{proposition}
Let $(P,\sim)$ be a poset and equivalence relation
satisfying Axioms~\ref{ax:order-compatibility},
\ref{ax:invariance}, \ref{ax:finiteness}${}^+$, \ref{ax:concatenation},
as well as condition \eqref{Tor-finiteness-condition}.

If $A=k[P]_\red$ is a nongraded Koszul ring then
one can re-express
\begin{equation}
\label{Q-re-expressed}
Q_{\tilde{x},\tilde{y}}(t) 
= \sum_{\substack{\alpha \in \Int(P)/\! \sim:\\ 
        s(\alpha) = \tilde{x}, \\t(\alpha) =\tilde{y}}} \qquad
  \sum_{i \geq 1} h_{i-1,i-1}(\Delta_\alpha) \,\, t^i
\end{equation}
and $P(t), Q(t)$ determine each other uniquely by the matrix equation 
\begin{equation}
\label{Koszul-matrix-equation}
P(t)Q(-t)=I.
\end{equation}
\end{proposition}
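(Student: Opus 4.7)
My plan is to prove the two assertions in sequence, reducing each to machinery already in place.

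For the re-expression \eqref{Q-re-expressed}, the strategy is to use Koszulness to collapse the bigraded $\Tor$ into its diagonal, and then identify the diagonal homology dimensions with the $h$-triangle entries. By Proposition~\ref{prop:Koszulness-via-tor}, Koszulness of $A$ (equivalently of $R=\gr_I A$) forces $\Tor_i^R(k_{\tilde x},k_{\tilde y})_j=0$ whenever $i\ne j$, so $Q_{\tilde x,\tilde y}(t)=\sum_{i\ge 0}\dim_k\Tor_i^R(k_{\tilde x},k_{\tilde y})_i\,t^i$. The $i\ge 1$ diagonal pieces are then given by Corollary~\ref{cor:ring-tor} specialized to $j=i$, namely $\bigoplus_\alpha \tilde{H}_{i-2}(\Delta_\alpha^{\langle i-2\rangle},\Delta_\alpha^{\langle i-1\rangle};k)$. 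Meanwhile Corollary~\ref{cor:main-algebra} upgrades Koszulness of $A$ to sequential Cohen-Macaulayness of every open interval of $P$, so each $\Delta_\alpha$ is in particular sequentially acyclic over $k$, and Proposition~\ref{prop:h-triangle-diagonal} (with its index $i$ replaced by $i-1$) identifies each $\alpha$-summand dimension with $h_{i-1,i-1}(\Delta_\alpha)$. Packaging these identifications as a power series in $t$ yields the formula \eqref{Q-re-expressed}.

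For the matrix equation \eqref{Koszul-matrix-equation}, the plan is a standard Euler-characteristic argument applied to the minimal graded projective resolution of $k_{\tilde y}$ as a \emph{left} $R$-module,
\[
\cdots \to G^{(1)} \to G^{(0)} \to k_{\tilde y} \to 0.
\]
The indecomposable graded projective left $R$-modules are the shifts of $R\xi_{\tilde z}$ for $\tilde z\in P/\!\sim$, and the standard minimal-resolution recipe gives multiplicity $\dim_k\Tor_i^R(k_{\tilde z},k_{\tilde y})_d$ for $R\xi_{\tilde z}(-d)$ in $G^{(i)}$. Koszulness forces this multiplicity to vanish unless $d=i$, so
\[
G^{(i)} \;=\; \bigoplus_{\tilde z} (R\xi_{\tilde z})(-i)^{\,q_i(\tilde z,\tilde y)},
\qquad q_i(\tilde z,\tilde y):=\dim_k\Tor_i^R(k_{\tilde z},k_{\tilde y})_i.
\]
Using Proposition~\ref{prop:finiteness}, left-multiplication by the idempotent $\xi_{\tilde x}$ is an exact functor on graded $k$-vector spaces with $\xi_{\tilde x}(R\xi_{\tilde z})=\xi_{\tilde x}R\xi_{\tilde z}$. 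In each graded degree $d$ the projected complex $\xi_{\tilde x}G^{(\bullet)}$ has nonzero pieces only for $i\le d$ (since generators sit in degree $i$), with each such piece $k$-finite-dimensional thanks to Axiom~\ref{ax:finiteness}${}^{+}$ and the $\Tor$-finiteness hypothesis \eqref{Tor-finiteness-condition}. The Euler characteristic of the projected complex in each degree $d$ then reads
\[
\sum_i (-1)^i\sum_{\tilde z} q_i(\tilde z,\tilde y)\,\dim_k(\xi_{\tilde x}R\xi_{\tilde z})_{d-i} \;=\; \delta_{\tilde x,\tilde y}\,\delta_{d,0},
\]
and rearranging this as a power series in $t$ is precisely the entry-wise identity $[P(t)Q(-t)]_{\tilde x,\tilde y}=\delta_{\tilde x,\tilde y}$.

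The uniqueness assertion is then immediate: both $P(0)$ and $Q(0)$ equal the identity matrix, since $(\xi_{\tilde x}R\xi_{\tilde y})_0$ and $\Tor_0^R(k_{\tilde x},k_{\tilde y})$ are both one-dimensional for $\tilde x=\tilde y$ and zero otherwise, so each of $P(t),Q(t)$ is invertible in the ring $\ZZ[[t]]^{P/\!\sim\,\times\,P/\!\sim}$ and the equation $Q(-t)=P(t)^{-1}$ recovers either matrix from the other. The main technical point that needs care is controlling the Euler-characteristic sum: the Koszul shifts do most of the work by making only finitely many homological degrees contribute to each graded degree, and the remaining finiteness is exactly what Axiom~\ref{ax:finiteness}${}^{+}$ and the $\Tor$-finiteness hypothesis are arranged to guarantee.
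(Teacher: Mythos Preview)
Your argument for the re-expression \eqref{Q-re-expressed} is essentially the paper's own: it too invokes Proposition~\ref{prop:Koszulness-via-tor}, Corollary~\ref{cor:ring-tor}, and Proposition~\ref{prop:h-triangle-diagonal}. You are slightly more careful in spelling out, via Corollary~\ref{cor:main-algebra}, why each $\Delta_\alpha$ is sequentially acyclic so that Proposition~\ref{prop:h-triangle-diagonal} actually applies; the paper leaves that implicit.

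For the matrix equation \eqref{Koszul-matrix-equation} you take a genuinely different route. The paper simply cites \cite[Theorem 2.9]{Woc} and \cite[Theorem 2.11.1]{BGS} for the identity $P(t)Q(-t)=I$, whereas you supply a direct Euler-characteristic argument on the minimal graded projective resolution of $k_{\tilde y}$ as a left $R$-module, using the Koszul shifts to bound the homological degrees contributing in each internal degree and the hypotheses Axiom~\ref{ax:finiteness}${}^{+}$ and \eqref{Tor-finiteness-condition} to guarantee termwise finiteness. This is correct and is essentially the standard proof that lies behind those references; the only point one might add for completeness is the remark (already made in the paper's introduction) that Koszulness of $R$ passes to left modules via \cite[Prop.~2.2.1]{BGS}, so that $k_{\tilde y}$ really does have a linear left resolution. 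What your approach buys is self-containment, at the cost of reproving a known fact; the paper's approach keeps the exposition short by outsourcing.
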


\begin{proof}
When $A$ is nongraded Koszul, Corollary~\ref{cor:ring-tor},
Proposition~\ref{prop:Koszulness-via-tor}, Proposition~\ref{prop:h-triangle-diagonal}
together imply the formula \eqref{Q-re-expressed} for $Q_{\tilde{x},\tilde{y}}(t)$.

When $R=\gr_I A$ is an $\NN$-graded Koszul ring,
the matrix equation \eqref{Koszul-matrix-equation} is a consequence
of \cite[Theorem 2.9]{Woc} or \cite[Theorem 2.11.1]{BGS}.
\end{proof}

\begin{example} 
\label{ex:numerology}
\rm \ \\
Let $P$ be the poset on $[8]=\{1,2,3,4,5,6,7,8\}$ from Figure~\ref{pic:exposet}.
  
\begin{figure} [htb] 
	\centering
	\includegraphics{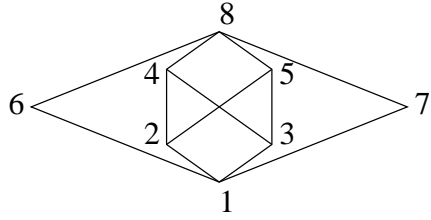}
	\caption{The poset $P$.}
	\label{pic:exposet}
\end{figure}

Considering the trivial equivalence relation $\sim$ on $\Int(P)$,
so that $k[P]_\red=k[P]$, the matrices $P(t), Q(t)$ are as follows:
$$
P(t)=\left[
\begin{matrix}
1&t&t&t^2&t^2&t&t&t^3\\
0&1&0&t&t&0&0&t^2\\
0&0&1&t&t&0&0&t^2\\
0&0&0&1&0&0&0&t\\
0&0&0&0&1&0&0&t\\
0&0&0&0&0&1&0&t\\
0&0&0&0&0&0&1&t\\
0&0&0&0&0&0&0&1
\end{matrix}
\right],
$$
$$
Q(t)=\left[
\begin{matrix}
1&t&t&t^2&t^2&t&t&2t^2+t^3\\
0&1&0&t&t&0&0&t^2\\
0&0&1&t&t&0&0&t^2\\
0&0&0&1&0&0&0&t\\
0&0&0&0&1&0&0&t\\
0&0&0&0&0&1&0&t\\
0&0&0&0&0&0&1&t\\
0&0&0&0&0&0&0&1
\end{matrix}
\right].
$$
For example, to compute the entry $Q_{1,8}(t)$, one
considers the open interval $(1,8)$.  The
order complex $\Delta_\alpha = \Delta(1,8)$ 
is the disjoint union of the two isolated vertices labelled $6,7$
with a $1$-sphere on the vertices labelled $2,4,3,5$ in cyclic order.
This means that 
\begin{enumerate}
\item[$\bullet$]
$\Delta_\alpha^{\langle -1 \rangle}, \Delta_\alpha^{\langle 0 \rangle}$ 
are both equal to all of $\Delta_\alpha$, while
\item[$\bullet$]
$\Delta_\alpha^{\langle 1 \rangle}$ is the $1$-sphere on vertices $2,4,3,5$, and
\item[$\bullet$]
$\Delta_\alpha^{\langle i \rangle}=\varnothing$ is the
empty complex for $i \geq 2$.  
\end{enumerate}
From this one can compute
$$
\tilde{H}_{i}(\Delta_\alpha^{\langle i \rangle}, 
\Delta_\alpha^{\langle i+1 \rangle}; k)  \cong 
\begin{cases}
0   &\text{ for }i=-1\\
k^2 &\text{ for }i=0\\
k   &\text{ for }i=1\\
0   &\text{ for }i \geq 2
\end{cases}
$$
and hence 
$$
\begin{aligned}
Q_{1,8}(t) 
&= 0 \cdot t^{-1+2} + 2 \cdot t^{0+2} 
   + 1 \cdot t^{1+2} + 0 \cdot t^{2+2} + 0 \cdot t^{3+2} + \cdots \\
&= 2t^2+t^3.
\end{aligned}
$$
One can then check that the matrix equation $P(t)Q(-t)=I_{8 \times 8}$ holds here.
\end{example}

\section{Characterizing Quadraticity}
\label{sec:quadraticity}

We wish to characterize combinatorially when 
$\gr_I A$ for $A=k[P]_\red$ is a quadratic ring,
generalizing a result of Woodcock \cite[Lemma 4.5]{Woc} 
for incidence algebras of graded posets $P$.
Woodcock's result uses the well-known notion of {\it gallery-connectedness}
for pure simplicial complexes.  We begin by developing this notion
and its properties in the nonpure setting.

\subsection{Sequential galleries and their properties}

\begin{definition}
\rm \ \\
In a simplicial complex $\Delta$, say that two faces $F, G$
(say with $\dim F \leq \dim G$) have a {\it sequential gallery}
connecting them, and write $F \rightarrow G$, if there exists a sequence
of faces of $\Delta$
\begin{equation}
\label{typical-gallery}
F = F_0,F_1,\ldots,F_t=G
\end{equation}
such that for each $i=1,\ldots,t$, either
\begin{enumerate}
\item[$\bullet$] $\dim F_{i-1} = \dim F_i= 1 + \dim (F_{i-1} \cap F_i)$, or
\item[$\bullet$] $F_{i-1} \subset F_i$ and $\dim F_i = \dim {F_{i-1}}+1$.
\end{enumerate}
When $\dim F = \dim G$, note that this forces all of the steps in
the sequential gallery to be of the first kind, in which case
the sequence is called a {\it gallery} connecting the faces $F, G$.
\end{definition}

\begin{lemma}
\label{lemma:sequential-gallery}
If $F \rightarrow G$ in a simplicial complex $\Delta$, then for
any subfaces $F' \subseteq F$ and $G' \subseteq G$, there is a sequential
gallery connecting them.  That is, either
$$
\begin{aligned}
F'\rightarrow G' & \text{ if }\dim F' \leq \dim G' \text{, or}\\
G'\rightarrow F' & \text{ if }\dim G' \leq \dim F'.
\end{aligned}
$$
\end{lemma}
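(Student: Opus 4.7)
The plan is to separate the statement into two ingredients: transporting a subface of $F$ across a single step of the given sequential gallery, and connecting two arbitrary subfaces within a single simplex. This isolates the dimension comparison $\dim F' \leq \dim G'$ versus $\dim G' \leq \dim F'$ from the induction on gallery length.

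The first ingredient is an intra-simplex claim: for any face $K$ of $\Delta$ and any two subfaces $\sigma, \tau \subseteq K$ with $\dim \sigma \leq \dim \tau$, there is a sequential gallery $\sigma \to \tau$ using only subfaces of $K$. One grows $\sigma$ by adding vertices of $K$ one at a time until reaching dimension $\dim \tau$, and then flips within $K$ one vertex at a time until reaching $\tau$; any single-vertex swap between two subsets of $K$ of the same size is a legitimate flip step because they meet in a codimension-one face. The second ingredient is a one-step transport lemma: given a single step $F_{i-1}, F_i$ of the given gallery and a subface $F'_{i-1} \subseteq F_{i-1}$, produce $F'_i \subseteq F_i$ with $\dim F'_i = \dim F'_{i-1}$ such that $F'_{i-1}$ and $F'_i$ are either equal or flip-adjacent. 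In a grow step $F_{i-1} \subset F_i$ one takes $F'_i := F'_{i-1}$, which still lies in $F_i$; in a flip step, write $F_{i-1} \setminus F_i = \{v\}$ and $F_i \setminus F_{i-1} = \{w\}$, and set $F'_i := (F'_{i-1} \setminus \{v\}) \cup \{w\}$ if $v \in F'_{i-1}$, and $F'_i := F'_{i-1}$ otherwise. Iterating through the $t$ steps of the given gallery yields a subface $F'_t \subseteq G$ with $\dim F'_t = \dim F'$, together with a sequential gallery $F' \to F'_t$ built entirely from flip steps.

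Combining the two ingredients, apply the intra-simplex claim to $F'_t, G' \subseteq G$. When $\dim F' \leq \dim G'$, concatenation gives $F' \to F'_t \to G'$. When $\dim G' \leq \dim F'$, concatenate the intra-simplex gallery $G' \to F'_t$ with the reversal of the transport gallery $F' \to F'_t$; the reversal is itself a sequential gallery precisely because every step is a flip, and flip steps are symmetric. The main obstacle is the asymmetry in the definition of sequential gallery---grow steps cannot be reversed into shrink steps, so one cannot naively run the given gallery backwards---and engineering the transport to use only flip steps is exactly what circumvents this.
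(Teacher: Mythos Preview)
Your proof is correct. Both your argument and the paper's begin with the same intra-simplex claim (the paper's base case $t=0$), but the inductive organization differs. The paper inducts on the length $t$ of the given gallery $F\to G$, splitting the last step into a flip case and a grow case, and invoking the induction hypothesis to produce the needed sequential gallery between the chosen intermediate subface and $F'$; the dimension comparison $\dim F' \lessgtr \dim G'$ is absorbed into the induction hypothesis rather than handled explicitly. Your approach instead iterates a one-step transport of $F'$ through the entire gallery and observes that this transport uses \emph{only flip steps}, hence is reversible; this single observation is what lets you dispatch the case $\dim G' \leq \dim F'$ cleanly by reversing the transport and prepending the intra-simplex gallery $G'\to F'_t$. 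The upshot is that your proof is slightly more constructive and makes the role of the asymmetry in the definition (grow steps cannot be reversed) completely explicit, while the paper's induction hides this inside the recursive call.
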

\begin{proof}
Induct on the length $t$ of a sequential gallery as in \eqref{typical-gallery} 
connecting $F$ to $G$.  

In the base case $t=0$, one has $F=G$, so that $F',G'$ are subfaces of a
single ambient simplex.
By relabelling, one may assume without loss of generality  
that $\dim F' \leq \dim G'$.  Pick a subface $G'' \subseteq G'$
with the same dimension as $F'$.  It is easy to produce
a gallery $F' \rightarrow G''$ using downward induction on the
size of the symmetric difference $(F' \setminus G'') \cup (G'' \setminus F')$:
replace one-by-one the elements of $F' \setminus G''$
with the elements of $G'' \setminus F'$.
One can then follow this with a sequential gallery from $G'' \rightarrow G'$.

In the inductive step, there are two cases.  
\vskip .1in
\noindent
{\sf Case 1.} $\dim F_{t-1}=\dim F_t (=\dim G)$.

Replace $G$ with $F_{t-1}$, and replace the subface $G'\subseteq G$ with
a subface $F_{t-1}'\subseteq F_{t-1}$ having the same dimension as $G'$.
Now apply induction to produce a sequential gallery $F'\rightarrow F'_{t-1}$,
followed by a gallery $F'_{t-1} \rightarrow G'$.

\vskip .1in
\noindent
{\sf Case 2.} $\dim F_{t-1} < \dim F_t (=\dim G)$.

One has 
$$
\dim F' \leq \dim F \leq \dim F_{t-1}
$$
since $F'\subseteq F$ and since there is a gallery $F \rightarrow F_{t-1}$.
Hence there exists a subface $F'_{t-1} \subseteq F_{t-1}$ having the same
cardinality as $F'$.  Apply induction to get a gallery connecting
$F', F'_{t-1}$ in either direction, and also to get either a sequential gallery
$G'\rightarrow F'_{t-1}$ or $F'_{t-1} \rightarrow G'$.  Either of
these sequential galleries can be concatenated with the gallery
connecting $F',F'_{t-1}$ to give a sequential gallery connecting $F',G'$.
\end{proof}

\begin{proposition}
\label{prop:seq-gallery-equivalences}
The following are equivalent for a simplicial complex $\Delta$,
and define the notion for $\Delta$ to be 
{\it sequential-gallery-connected (SGC)} :
\begin{enumerate}
\item[(i)]
Every pair of faces has a sequential gallery connecting them.
\item[(ii)]
Every pair of facets has a sequential gallery connecting them.
\item[(iii)]
Every pair of faces of the same dimension has a gallery connecting them.
\end{enumerate}
\end{proposition}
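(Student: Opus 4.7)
The plan is to prove the cycle $(\text{i}) \Rightarrow (\text{ii}) \Rightarrow (\text{i})$ together with the equivalence $(\text{i}) \Leftrightarrow (\text{iii})$, leaning on Lemma~\ref{lemma:sequential-gallery} for the only substantive step.

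First I would dispatch the easy direction $(\text{i}) \Rightarrow (\text{ii})$, which is immediate because facets are faces, together with $(\text{i}) \Rightarrow (\text{iii})$, which follows from the observation already recorded in the definition of sequential gallery: if $\dim F = \dim G$, then every step $F_{i-1},F_i$ of any sequential gallery between them must be of the first (adjacency) kind, since the definition provides no dimension-lowering move that could ever undo an upward second-kind step. Hence such a sequential gallery is automatically a gallery.

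For $(\text{ii}) \Rightarrow (\text{i})$, the idea is to embed two arbitrary faces $F$ and $G$ into facets $F^{\ast} \supseteq F$ and $G^{\ast} \supseteq G$, invoke (ii) to obtain a sequential gallery $F^{\ast} \to G^{\ast}$, and then push this down to $F$ and $G$ by applying Lemma~\ref{lemma:sequential-gallery} to the subfaces $F \subseteq F^{\ast}$ and $G \subseteq G^{\ast}$. The lemma delivers a sequential gallery in whichever direction the dimensions of $F$ and $G$ dictate, which is all that (i) demands.

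For $(\text{iii}) \Rightarrow (\text{i})$, given faces $F$ and $G$ with (say) $\dim F \leq \dim G$, I would pick a subface $G' \subseteq G$ with $\dim G' = \dim F$, apply (iii) to obtain a gallery $F \to G'$ consisting entirely of first-kind steps, and then concatenate with the chain $G' \subsetneq G' \cup \{v_1\} \subsetneq \cdots \subsetneq G$ obtained by adjoining the vertices of $G \setminus G'$ one at a time; the latter is a sequential gallery built entirely from second-kind steps. I do not anticipate any real obstacle here: the genuine combinatorial work has already been absorbed into Lemma~\ref{lemma:sequential-gallery}, and what remains amounts to routine unwinding of the definitions of ``gallery'' and ``sequential gallery.''
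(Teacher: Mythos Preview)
Your proof is correct and essentially matches the paper's: both use Lemma~\ref{lemma:sequential-gallery} (embedding arbitrary faces in facets and passing to subfaces) as the only nontrivial step, and your argument for $(\text{iii})\Rightarrow(\text{i})$ is identical to the paper's. The only difference is organizational---the paper proves the cycle $(\text{i})\Rightarrow(\text{ii})\Rightarrow(\text{iii})\Rightarrow(\text{i})$ rather than the two separate equivalences $(\text{i})\Leftrightarrow(\text{ii})$ and $(\text{i})\Leftrightarrow(\text{iii})$---but the content is the same.
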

\begin{proof}
\noindent
{\sf (i) implies (ii)}: Trivial.
\newline
\noindent
{\sf (ii) implies (iii)}:
This follows from Lemma~\ref{lemma:sequential-gallery}
since every face lies in a facet.
\newline
\noindent
{\sf (iii) implies (i)}:
Assume (iii) and start with any pair of faces $F, G$.
Without loss of generality assume $\dim F \leq \dim G$.
Then there exist a subface $G' \subseteq G$ having the same dimension
as $F$, and a gallery $F \rightarrow G'$ by (iii).  
Following this by a sequential gallery $G'\rightarrow G$
gives a sequential gallery $F \rightarrow G$.
\end{proof}

When one insists that the SGC property is hereditary in the
sense of being inherited by all {\it links} of faces, it
becomes equivalent to a sequential version of connectivity.

\begin{proposition}
\label{prop:connectivity-equivalences}
The following are equivalent for a simplicial complex $\Delta$:
For every face $F$ of $\Delta$, the link $K:=\link_\Delta(F)$
\begin{enumerate}
\item[(i)]
is SGC.
\item[(ii)]
has $K^{\langle j \rangle}$ connected for all $j$ in the range $1 \leq j \leq \dim K$.
\item[(iii)]
has $\tilde{H}_0(K^{\langle j \rangle},K^{\langle j+1\rangle};k)=0$ 
for some field $k$ and 
for all $j$ in the range $1 \leq j \leq \dim K$.
\end{enumerate}
\end{proposition}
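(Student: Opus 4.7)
The plan is to prove (i) $\Rightarrow$ (ii), (ii) $\Leftrightarrow$ (iii), and (ii) $\Rightarrow$ (i). The equivalence (ii) $\Leftrightarrow$ (iii) comes cleanly from the long exact sequence of the pair $(K^{\langle j \rangle}, K^{\langle j+1 \rangle})$ in reduced homology,
\begin{equation*}
\tilde{H}_0(K^{\langle j+1 \rangle}; k) \to \tilde{H}_0(K^{\langle j \rangle}; k) \to \tilde{H}_0(K^{\langle j \rangle}, K^{\langle j+1 \rangle}; k) \to \tilde{H}_{-1}(K^{\langle j+1 \rangle}; k).
\end{equation*}
For (ii) $\Rightarrow$ (iii), connectedness of both layers combined with the vanishing of $\tilde{H}_{-1}(K^{\langle j+1 \rangle}; k)$ (the layer either contains vertices, or, when $j = \dim K$, is empty, in which case the relative $\tilde{H}_0$ reduces to the absolute $\tilde{H}_0$ of $K^{\langle \dim K \rangle}$) forces the relative $\tilde{H}_0$ to vanish. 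For (iii) $\Rightarrow$ (ii), I would use descending induction on $j$ starting from $j = \dim K$, where $K^{\langle \dim K + 1 \rangle} = \varnothing$ reduces the hypothesis to $\tilde{H}_0(K^{\langle \dim K \rangle}; k) = 0$.

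For (i) $\Rightarrow$ (ii), the key observation is that a sequential gallery has non-decreasing dimensions, so galleries between faces of dimension $\geq d$ stay inside $K^{\langle d \rangle}$; combined with SGC of $K$, this implies any two facets of $K^{\langle d \rangle}$ (which are facets of $K$ of dimension $\geq d$) are connected by a sequential gallery inside $K^{\langle d \rangle}$, giving a path in the $1$-skeleton and hence connectedness when $d \geq 1$. For (ii) $\Rightarrow$ (i), I would proceed by induction on $\dim K$ where $K = \link_\Delta(F)$, using that the link of a face in $K$ is itself a link in $\Delta$, so the hypothesis transfers to every sub-link. By Proposition~\ref{prop:seq-gallery-equivalences}(iii) it suffices to show that for each $d$ with $0 \leq d \leq \dim K$, any two $d$-dimensional faces of $K$ are connected by a gallery of $d$-dimensional faces. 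The base $d = 0$ is immediate, since two distinct vertices share the empty face as a common codim-$1$ face. For $d \geq 1$, when two $d$-dimensional faces $F, G$ share a vertex $v$, I pass to $F \setminus \{v\}$ and $G \setminus \{v\}$ inside $\link_K(v) = \link_\Delta(F \cup \{v\})$, which is SGC by induction, apply the gallery-connectivity statement at dimension $d - 1$, and reattach $v$.

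The main obstacle is the remaining case $F \cap G = \varnothing$, where the global connectedness from (ii) becomes essential. Here I would use that $K^{\langle d \rangle}$ is connected to select a $1$-skeleton path $a = v_0, v_1, \ldots, v_s = b$ in $K^{\langle d \rangle}$ with $a \in F$ and $b \in G$; for each $i$, the edge $\{v_i, v_{i+1}\}$ lies in some face of $K$ of dimension $\geq d$, from which one can extract a $d$-dimensional face $H_i$ of $K$ containing it. The resulting chain $F, H_0, H_1, \ldots, H_{s-1}, G$ consists of $d$-dimensional faces in which consecutive members share a vertex, and the shared-vertex case applied to each consecutive pair concatenates to produce a single $d$-dimensional gallery from $F$ to $G$. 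The delicacy, and the reason for invoking Proposition~\ref{prop:seq-gallery-equivalences}(iii) rather than working with sequential galleries directly, is that sequential galleries carry a built-in direction (non-decreasing dimension) which obstructs straightforward concatenation whenever the intermediate facets vary in dimension; by keeping the argument at a single fixed dimension, this directional obstruction disappears.
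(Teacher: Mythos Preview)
Your argument is correct and follows essentially the same route as the paper's proof: the long exact sequence for (ii) $\Leftrightarrow$ (iii), the observation that sequential galleries stay within $K^{\langle d \rangle}$ for (i) $\Rightarrow$ (ii), and the outer induction on $\dim K$ combined with the path-in-$K^{\langle d \rangle}$ reduction to the shared-vertex case for (ii) $\Rightarrow$ (i). The paper packages the last step as an inner induction on path length rather than your two-case split (shared vertex versus disjoint), but the content is identical.

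One small point: in the (ii) $\Rightarrow$ (i) paragraph you overload the letter $F$, first for the face of $\Delta$ with $K=\link_\Delta(F)$ and then for a $d$-dimensional face of $K$; the identity $\link_K(v)=\link_\Delta(F\cup\{v\})$ is only correct with the first meaning, so you should rename one of them.
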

\begin{proof}

\noindent
{\sf (ii) if and only if (iii):}
Since vanishing of zero-dimensional reduced
homology (over any field $k$) is equivalent to connectedness,
this equivalence follows
from an easy long-exact sequence argument,
as in the proof of Proposition~\ref{prop:new-seq-C-M-defn}.

\noindent
{\sf (i) implies (ii):}
Assume every link of $\Delta$ is SGC.  Given two vertices $v, v'$ lying 
in $K^{\langle j \rangle}$ for some link $K$ and some $j \geq 1$,
pick $j$-simplices $F, F'$ containing $v,v'$ respectively.
Since $K$ is SGC there exists a gallery $F \rightarrow F'$ in $K$.
Because $\dim F = \dim F'=j$, this gallery lies in $K^{\langle j \rangle}$,
and hence also gives a path connecting $v,v'$ inside $K^{\langle j \rangle}$.

\noindent
{\sf (ii) implies (i):}
Assume every link $K$ of $\Delta$ has $K^{\langle j \rangle}$ connected
for all $j \geq 1$, and we want to show that every link $K$ is SGC.
Proceed by induction on $\dim K$, with the base case $\dim K = 0$ being trivial.

In the inductive step, let $F,G$ be faces in $K$ 
of the same dimension $d$, and we want to
find a gallery between them.  

If $d=0$ this is trivial, so assume that $d > 0$.  
Since both $F, G$ lie in $K^{\langle d \rangle}$, which is connected
by (ii), one can pick vertices $f,g$ lying in $F,G$,
and there will exist a path $f=f_0,f_1,\dots,f_t=g$ 
along edges in $K^{\langle d \rangle}$.

Proceed by a second (inner) induction on the length $t$ of this
path.  In the base case $t=0$, both $F,G$ contain $f=f_0=g$.
Since $F \setminus \{f_0\}, G \setminus \{f_0\}$ are
faces of the link $K':=\link_K(\{f_0\})=\link_\Delta(F\cup\{f_0\})$ and $\dim K' < \dim K$, by the outer induction on $d$,
this $K'$ is SGC and there exists a gallery 
$F \setminus \{f_0\} \rightarrow G \setminus \{f_0\}$ in $K'$.
This leads to a gallery $F \rightarrow G$ in $K$.

In the second (inner) inductive step where $t \geq 1$,
since the $\{f_0,f_1\}$ lies in $K^{\langle d \rangle}$, it lies
in some $d$-face $F'$ of $K$.  Then 
$F,F'$ fall into the base case $t=0$ (since both contain $f_0$),
and hence have a gallery $F \rightarrow F'$, 
while $F',G$ contain the vertices $f_1,f_t$ that have
a path of length $t-1$ between them, and hence there is
a gallery $F'\rightarrow G$ by the inner induction on $t$.
Thus one has a gallery $F \rightarrow F'\rightarrow G$.
\end{proof}

Finally, we will need to know that some of the
preceding notions interact well with the notion of simplicial join.
Recall that for two abstract simplicial complexes
$\Delta_i$ on disjoint vertex sets $V_i$ for $i=1,2$, their
{\it simplicial join} on vertex set $V_1 \sqcup V_2$ has simplices
indexed by the ordered pairs $(F_1,F_2)$ 
in which $F_i$ is a simplex of $\Delta_i$ for $i=1,2$.
Here $\dim(F_1,F_2) = \dim F_1 + \dim F_2 + 1$.

\begin{proposition}
\label{prop:joins}
For two simplicial complexes $\Delta_1, \Delta_2$,
and their simplicial join $\Delta:=\Delta_1 * \Delta_2$,
\begin{enumerate}
\item[(i)] $\Delta_1, \Delta_2$ are both SGC if and only if
$\Delta$ is SGC, and
\item[(ii)] If $\Delta_i^{\langle j \rangle}$ are
connected for $1 \leq j \leq \dim \Delta_i$ and
$i=1,2$, then $\Delta^{\langle j \rangle}$ is
connected for $1 \leq j \leq \dim \Delta$.
\end{enumerate}
\end{proposition}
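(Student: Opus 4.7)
The plan naturally splits by part of the proposition. For part~(i)'s ``if'' direction, I use Proposition~\ref{prop:seq-gallery-equivalences}(ii): facets of $\Delta := \Delta_1 * \Delta_2$ are exactly pairs $(F_1, F_2)$ with $F_i$ a facet of $\Delta_i$. Given two such facets $(F_1, F_2)$ and $(G_1, G_2)$, I would concatenate two lifted sequential galleries: first one in the first coordinate from $F_1$ to $G_1$ (with $F_2$ held constant in the second coordinate), followed by one in the second coordinate from $F_2$ to $G_2$ (with $G_1$ held constant). Each lifted step transports verbatim as a valid sequential gallery step in $\Delta$, via the identities $\dim(H_1, H_2) = \dim H_1 + \dim H_2 + 1$ and $(H_1, K) \cap (H_1', K) = (H_1 \cap H_1', K)$, which make the dimension and codimension-one conditions carry through the join.

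For the ``only if'' direction of~(i), the natural first move is to fix a facet $F_2$ of $\Delta_2$ and, given two facets $F_1, G_1$ of $\Delta_1$, apply SGC of $\Delta$ to the facets $(F_1, F_2), (G_1, F_2)$ to obtain a sequential gallery, then project to the first coordinate. The projection handles three of the four single-step types cleanly: ``pure first-factor'' gallery or inclusion steps of the join give the corresponding valid steps in $\Delta_1$, and ``pure second-factor'' steps project to null moves. The fourth type is a \emph{rotation step} in which one factor loses a vertex while the other gains one; when the first factor loses, the projection becomes an anti-inclusion, which is not a valid sequential gallery move. I would handle this by inducting on $\dim \Delta$, observing that rotation steps between top-dimensional facets of $\Delta$ are impossible (such facets are products of facets of each factor, so neither factor can shrink). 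Thus the top-dimensional gallery graph of $\Delta$ is the Cartesian product of the top-dimensional gallery graphs of $\Delta_1, \Delta_2$, which is connected iff both are; the lower-dimensional cases are then reduced to smaller joins using the link identity $\link_\Delta((\emptyset, F_2)) = \Delta_1 * \link_{\Delta_2}(F_2)$.

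For part~(ii) I would take two vertices $u, v$ of $\Delta^{\langle j \rangle}$ and build an edge-path between them. The underlying numerical fact is that a face $(G_1, G_2)$ lies in $\Delta^{\langle j \rangle}$ iff $m_1(G_1) + m_2(G_2) + 1 \ge j$, where $m_i(G_i) := \max\{\dim F : G_i \subseteq F \in \Delta_i\}$. The argument splits by where $u, v$ live. When $u = \{u_1\}, v = \{v_1\}$ both lie in $V_1$: if $j \le \dim \Delta_2 + 1$ the vertex constraint is vacuous and I would join each of $u, v$ to a fixed vertex $w_2$ of a top-dimensional face $F_2$ of $\Delta_2$ via the edges $(\{u_1\}, \{w_2\})$ and $(\{v_1\}, \{w_2\})$, each supported in a face of dimension $\dim \Delta_2 + 1 \ge j$; if $j \ge \dim \Delta_2 + 2$ I set $j' := j - 1 - \dim \Delta_2$, which lies in $[1, \dim \Delta_1]$, note that both $u_1, v_1$ are vertices of $\Delta_1^{\langle j' \rangle}$ (connected by hypothesis), and lift an edge-path in that layer to edges $(\{x_{\ell - 1}, x_\ell\}, \emptyset)$ of $\Delta^{\langle j \rangle}$. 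The case $u, v \in V_2$ is symmetric, and the mixed case is bridged by a single edge $(\{v_1^*\}, \{v_2^*\})$ between vertices of top-dimensional facets $F_i^*$ of $\Delta_i$; this edge lies in $\Delta^{\langle j \rangle}$ because its ambient face $(F_1^*, F_2^*)$ has dimension $\dim \Delta \ge j$.

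The main obstacle will be part~(i)'s ``only if'' direction, where one must tame the rotation moves in a sequential gallery of the join: the induction on $\dim \Delta$ using the link identity is the right frame, but requires careful bookkeeping at each stage to verify that rotation steps really are absent at the relevant top dimension, and that the link of a vertex in one factor yields a strictly smaller join. For part~(ii), the only delicate point is ensuring that whenever the hypothesis on $\Delta_i^{\langle j' \rangle}$ connectivity is invoked, the index $j' = j - 1 - \dim \Delta_i$ actually lies in the admissible range $[1, \dim \Delta_i]$; the case split on $j$ relative to $\dim \Delta_2 + 1$ (and its $\Delta_1$-analog) handles this cleanly.
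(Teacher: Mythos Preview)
Your part~(ii) argument is correct and gives a clean direct edge-path construction; the paper instead writes $\Delta^{\langle j \rangle}$ as a union $\bigcup_{j_1+j_2+1=j} \Delta_1^{\langle j_1\rangle} * \Delta_2^{\langle j_2\rangle}$ of connected pieces sharing a common vertex, but your approach works just as well.

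For part~(i), the ``if'' direction is essentially the paper's idea, with one omission: concatenating the two lifted sequential galleries $(F_1,F_2)\to(G_1,F_2)\to(G_1,G_2)$ only works directly when \emph{both} $\dim F_1\le\dim G_1$ and $\dim F_2\le\dim G_2$. When one coordinate's dimension increases while the other decreases, the two lifted galleries point toward a common face rather than chaining, and one needs Lemma~\ref{lemma:sequential-gallery} to pass to a subface before concatenating. The paper handles this explicitly by working with characterization~(iii) (same-dimensional pairs) and invoking that lemma.

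The genuine gap is in your ``only if'' direction. Your induction on $\dim\Delta$ via the link identity $\link_\Delta((\varnothing,F_2))=\Delta_1*\link_{\Delta_2}(F_2)$ cannot start: to apply the inductive hypothesis to this smaller join you would need it to be SGC, but SGC is \emph{not} inherited by links (indeed, Proposition~\ref{prop:connectivity-equivalences} shows that having all links SGC is a strictly stronger hypothesis). Your top-dimensional observation is correct and does show that top-dimensional faces of $\Delta_1$ are gallery-connected, but this is far from SGC of $\Delta_1$: a facet of $\Delta_1$ of sub-maximal dimension is contained in no top-dimensional face, so neither the Cartesian-product argument nor Lemma~\ref{lemma:sequential-gallery} reaches it.

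The paper's device avoids induction entirely: choose $H$ to be a facet of $\Delta_2$ of \emph{maximum} dimension $\dim\Delta_2$. Then along any sequential gallery from $(F,H)$ to $(G,H)$ in $\Delta$, every intermediate face $(F_i,H_i)$ has $\dim H_i\le\dim H$, which (since total dimension is nondecreasing) forces $\dim F_i\ge\dim F$ throughout. This single maximality choice is what neutralizes the bad rotation moves and lets the projection to the first factor be repaired into a valid sequential gallery in $\Delta_1$.
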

\begin{proof}

\noindent
{\sf (i):} 
First assume $\Delta_1 * \Delta_2$ is
SGC, and we show that $\Delta_1$ is SGC.
Given two faces $F,G$ of $\Delta_1$ (say with $\dim F \leq \dim G$)
choose any facet $H$ of $\Delta_2$ achieving
$\dim H = \dim \Delta_2$, and then since $\Delta_1 * \Delta_2$
was assumed to be SGC, there exists a sequential gallery
$(F,H) \rightarrow (G,H)$ in $\Delta_1 * \Delta_2$.  Because
of the hypothesis on $H$, one can check that every face $(F_i,H_i)$
appearing in this gallery has $\dim H_i = \dim H = \dim \Delta_2$.
This means that one can replace each $(F_i,H_i)$ with $(F_i,H)$
and (after eliminating duplicates) one still has a sequential
gallery $(F,H) \rightarrow (G,H)$, having $H$ constant in the second factor.
Projecting this sequential gallery onto its first factors $F_i$
then gives a sequential gallery $F \rightarrow G$.

Next, assume both $\Delta_i$ are SGC, and we show that
$\Delta_1 * \Delta_2$ is SGC.  Given two faces $(F_1,F_2), (G_1,G_2)$ of $\Delta$,
of the same dimension, one must exhibit a gallery connecting them.
Without loss of generality by relabelling, one may assume
$\dim F_1 \leq \dim G_1$.  Hence there exists a sequential
gallery $F_1 \rightarrow G_1$ which leads to a
sequential gallery 
\begin{equation}
\label{starting-gallery}
(F_1,F_2) \rightarrow (G_1,F_2).
\end{equation}

If also $\dim F_2 \leq \dim G_2$ (so that $\dim F_i = \dim G_i$ for
$i=1,2$) then there exists a gallery $F_2 \rightarrow G_2$, which 
leads to the gallery $(F_1,F_2) \rightarrow (G_1,F_2) \rightarrow (G_1,G_2)$.

If $\dim F_2 > \dim G_2$, choose a subface $F_2' \subseteq F_2$
with the same dimension as $G_2$.  Then there exists a
gallery $F_2' \rightarrow G_2$ leading to a gallery
$(G_1,F_2') \rightarrow (G_1,G_2)$.   Also, Lemma~\ref{lemma:sequential-gallery}
applied to \eqref{starting-gallery} 
implies the existence of a gallery $(F_1,F_2) \rightarrow (G_1,F_2')$.
Hence one has the gallery
$(F_1,F_2) \rightarrow (G_1,F_2') \rightarrow (G_1,G_2)$.

\vskip .1in
\noindent
{\sf (ii):} When either of the $\Delta_i$ is $(-1)$-dimensional,
that is, the complex consisting of only the empty face, then the
assertion is trivial.  So assume without loss of generality that
both are at least $0$-dimensional.  In particular, this makes
the join $\Delta$ always at least $1$-dimensional and connected.

Fixing $j$ in the range 
$$
1 \leq j \leq \dim \Delta = \dim \Delta_1 + \dim \Delta_2 +1,
$$
one can check that
$$
\Delta^{\langle j \rangle}
= \bigcup_{\substack{(j_1,j_2):j_1 + j_2 + 1 = j
\\ -1 \leq j_1 \leq \dim \Delta_1\\
\\ -1 \leq j_2 \leq \dim \Delta_2}}
\Delta_1^{\langle j_1 \rangle} * \Delta_2^{\langle j_2 \rangle}.
$$
Our hypotheses then imply that every term in the above union is
at least $1$-dimensional and connected.
Furthermore, if for $i=1,2$ one chooses
facets $F_i$  of $\Delta_i$ and vertices $v_i$ lying in $F_i$,
then the two vertices $(v_1,\varnothing), (\varnothing,v_2)$ of $\Delta$ have
$$
\begin{aligned}
(v_1,\varnothing) &\text{ lying in the terms with }j_1 \geq 0, \\
(\varnothing,v_2) &\text{ lying in the terms with }j_2 \geq 0.
\end{aligned}
$$
Furthermore, the edge $(v_1,v_2)$ of $\Delta$ lies inside
the facet $(F_1,F_2)$, and hence is an edge inside
$\Delta^{\langle j \rangle}$ connecting these two vertices.
This shows $\Delta^{\langle j \rangle}$ is connected.
\end{proof}

\subsection{Quadraticity}

\begin{definition}
\label{defn:quadraticity}
\rm \ \\
Given $R=\oplus_{d \geq 0} R_d$ an $\NN$-graded ring,
the tensor algebra 
$$
T_{R_0}(R_1) := \bigoplus_{d \geq 0} 
\underbrace{R_1 \otimes_{R_0} 
\cdots \otimes_{R_0} R_1}
_{T^d_{R_0}(R_1):=R_1^{\otimes_{R_0} d}} 
$$
is an $\NN$-graded ring.
One says that $R$ is {\it quadratic} if
the multiplication map 
$$
\begin{array}{rcl}
T_{R_0}(R_1)& \overset{\pi}{\longrightarrow} & R \\
r_1 \otimes \cdots \otimes r_d & \longmapsto & r_1 \cdots r_d.
\end{array}
$$
has the following two properties:
\begin{enumerate}
\item[(a)] It is surjective, that is, $R$ is generated as an $R_0$-algebra by $R_1$.
\item[(b)] Its kernel $\ker \pi$ is generated as a two-sided-ideal of $T_{R_0}(R_1)$
by its homogeneous component of degree two 
$$
\ker \pi \cap (R_1 \otimes_{R_0} R_1).
$$
\end{enumerate}
\end{definition}

We wish to determine which $\NN$-graded rings $R=\gr_I A$, where  
$(P,\sim)$ obey the axioms and $A=k[P]_\red$,
enjoy the Properties (a), (b).  
Abbreviating $T:=T_{A_0}(R_1)$,
note that the $A_0-A_0$-bimodule structure is respected
by $\pi$, and hence we can take advantage of both the
bimodule direct sum decomposition in Proposition~\ref{prop:tensor-bases}
and the $\NN$-grading to write
\begin{equation}
\label{tensor-decomposition}
T= 
\bigoplus_{d \geq 0} 
\qquad 
\bigoplus_{(\tilde{x},\tilde{y}) \in P/ \! \sim \times P/\! \sim}
\qquad 
\bigoplus_{\substack{\alpha \in \Int(P)/\! \sim:\\
                s(\alpha) = \tilde{x}\\ t(\alpha) = \tilde{y}}} 
\quad
T_{\alpha,d},
\end{equation}
where $T_{\alpha,d}$ has as $k$-basis the elements
$$
\bar{\xi}(m):=
\bar{\xi}_{\widetilde{[x_0,x_1]}} \otimes 
\bar{\xi}_{\widetilde{[x_1,x_2]}} \otimes 
\cdots \otimes 
\bar{\xi}_{\widetilde{[x_{d-1},x_d]}} 
$$
in which 
$$
m:=(x(\alpha)=x_0 < x_1 < \cdots < x_d =y(\alpha))
$$
is a {\it maximal} chain of length $d$ in the interval
$[x(\alpha),y(\alpha)]$. 

This allows us to quickly dispose of Property (a).

\begin{proposition}
\label{prop:generated-in-degree-one}
Let $P$ be a poset and $\sim $ an equivalence relation on $\Int(P)$, satisfying Axioms
\ref{ax:order-compatibility}, \ref{ax:invariance}, \ref{ax:finiteness}, \ref{ax:concatenation}.  
Then the reduced incidence algebra $A:=k[P]_\red$ always has $R=\gr_I A$
satisfying Property (a), that is, $R$ is always generated as an $R_0$-algebra
by $R_1$.
\end{proposition}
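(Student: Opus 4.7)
The plan is to show surjectivity of $\pi$ by exhibiting each basis element $\bar{\xi}_\alpha$ of $R$ explicitly as a product of $\ell(\alpha)$ elements of $R_1$.

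First, recall from Proposition~\ref{prop:finiteness} (and its extension to $R = \gr_I A$) that $R$ has $k$-basis $\{\bar{\xi}_\alpha\}_{\alpha \in \Int(P)/\sim}$, where $\bar{\xi}_\alpha$ sits in degree $\ell(\alpha)$. Thus it suffices to show that each $\bar{\xi}_\alpha$ lies in the image of $\pi$. Since $R_0 = A_0$ is spanned by the idempotents $\{\xi_{\tilde{x}} : \tilde{x} \in P/\!\!\sim\}$, which satisfy $\xi_{\tilde{x}} = \bar{\xi}_{\widetilde{[x,x]}}$ and have $\ell = 0$, the claim is automatic in degree $0$. For $\alpha$ with $\ell(\alpha) = d \geq 1$, fix the representative interval $[x(\alpha), y(\alpha)]$ as in \eqref{eqn:xalpha-yalpha}, and select a chain
$$
x(\alpha) = x_0 < x_1 < \cdots < x_d = y(\alpha)
$$
of maximal length $d = \ell(\alpha)$ inside $[x(\alpha), y(\alpha)]$. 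The maximality of this chain forces each subinterval $[x_{i-1}, x_i]$ to be a covering relation, hence $\ell[x_{i-1}, x_i] = 1$, so each $\bar{\xi}_{\widetilde{[x_{i-1},x_i]}}$ lies in $R_1$.

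Next, I would compute the iterated product
$$
\bar{\xi}_{\widetilde{[x_0,x_1]}} \cdot \bar{\xi}_{\widetilde{[x_1,x_2]}} \cdots \bar{\xi}_{\widetilde{[x_{d-1},x_d]}}
$$
in $R$ using the multiplication rule \eqref{gr-multiplication}. At each stage, one witnesses the required factorization by the chain itself: the chain $x_0 \leq x_i \leq x_d$ realizes the concatenation, and one checks inductively that the lengths add, i.e. $\ell[x_0, x_i] = i$ and $\ell[x_i, x_d] = d - i$, so that $\ell[x_0, x_d] = \ell[x_0, x_i] + \ell[x_i, x_d]$. Consequently, the product does not collapse to $0$ in the associated graded, and equals $\bar{\xi}_{\widetilde{[x_0, x_d]}} = \bar{\xi}_\alpha$. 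Associativity of $R$, which follows from associativity of $A$ (Proposition~\ref{prop:associativity}) by passing to the associated graded, justifies computing the product in any order.

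This exhibits $\bar{\xi}_\alpha = \pi\bigl(\bar{\xi}_{\widetilde{[x_0,x_1]}} \otimes \cdots \otimes \bar{\xi}_{\widetilde{[x_{d-1},x_d]}}\bigr)$ for every $\alpha$, proving that $\pi : T_{R_0}(R_1) \to R$ is surjective. The only potential obstacle is verifying that the length condition in \eqref{gr-multiplication} is never violated during the iterated multiplication, but this is exactly ensured by starting with a chain of maximum length $\ell(\alpha)$ in $[x(\alpha), y(\alpha)]$, since no intermediate $\ell[x_0, x_i]$ can exceed $i$ without contradicting the maximality of the chain extending it.
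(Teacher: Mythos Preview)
Your proof is correct and follows exactly the same approach as the paper's own proof: pick a maximal-length chain in the representative interval and express $\bar{\xi}_\alpha$ as the product of the covering-step factors $\bar{\xi}_{\widetilde{[x_{i-1},x_i]}}\in R_1$. You simply spell out more of the verification (that maximality forces covering relations, and that the length condition in \eqref{gr-multiplication} holds at every step) than the paper does.
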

\begin{proof}
Given $\alpha$ with $d:=\ell(\alpha)$, pick
a maximal chain $m$ in $[x_0,y_0]$ achieving the maximum length $d=\ell(\alpha)$.
Then the element $\bar{\xi}_\alpha$ of $R_d$
can be expressed as the
product of the elements 
$\bar{\xi}_{\widetilde{[x_0,x_1]}} \cdot 
\bar{\xi}_{\widetilde{[x_1,x_2]}} \cdots
\bar{\xi}_{\widetilde{[x_{d-1},x_d]}}$,
each lying in $R_1$.
\end{proof}

\begin{theorem}
\label{thm:quadraticity}
Let $P$ be a poset and $\sim $ an equivalence relation on $\Int(P)$, satisfying Axioms
\ref{ax:order-compatibility}, \ref{ax:invariance}, \ref{ax:finiteness}, \ref{ax:concatenation}.  
Then the reduced incidence algebra $A:=k[P]_\red$ has
$R=\gr_I A$ quadratic
if and only if every open interval $(x,y)$ in $P$ 
has $\Delta(x,y)$ SGC.
\end{theorem}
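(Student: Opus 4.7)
The plan is to reduce quadraticity of $R$ to a $\Tor_2$-vanishing, translate it via Corollary~\ref{cor:ring-tor} to a relative homology vanishing on each $\Delta_\alpha$, and then identify this with SGC by induction on $\ell(\alpha)$, using the join decomposition of links of order complexes together with the combinatorial results of this section. By Proposition~\ref{prop:generated-in-degree-one}, property (a) of Definition~\ref{defn:quadraticity} always holds for $R = \gr_I A$, so only property (b) is at issue. Since $R_0$ is semisimple (Proposition~\ref{prop:finiteness}) and $R$ is generated in degree 1, a standard argument with minimal graded projective resolutions over $R$ of the right $R$-module $R_0$ (cf.~\cite[\S 2.3]{BGS}) shows that property (b) is equivalent to $\Tor_2^R(R_0, R_0)_j = 0$ for all $j > 2$. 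Using the bimodule decomposition $R_0 = \bigoplus_{\tilde{x}} k_{\tilde{x}}$ and Corollary~\ref{cor:ring-tor} applied at $i = 2$, this reformulates to the vanishing condition
\[
\tilde H_0\bigl(\Delta_\alpha^{\langle i \rangle}, \Delta_\alpha^{\langle i+1 \rangle};\, k\bigr) = 0 \quad\text{for every } \alpha \in \Int(P)/\!\!\sim \text{ and every integer } i \geq 1.
\]

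For the $(\Leftarrow)$ direction of the equivalence of this vanishing with SGC of every $\Delta(x, y)$, suppose $\Delta_\alpha$ is SGC. Then any two vertices of $\Delta_\alpha^{\langle j \rangle}$ lie in facets of dimension at least $j$, which are joined by a sequential gallery whose intermediate faces also have dimension at least $j$; picking a common vertex at each step of the gallery produces a path in $\Delta_\alpha^{\langle j \rangle}$, so that layer is connected for $1 \leq j \leq \dim \Delta_\alpha$, and the long exact sequence of the pair $(\Delta_\alpha^{\langle i \rangle}, \Delta_\alpha^{\langle i+1 \rangle})$ then yields the desired vanishing.

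For the $(\Rightarrow)$ direction, I argue by induction on $\ell(\alpha)$ that $\Delta_\alpha$ is SGC, the base case $\ell(\alpha) \leq 2$ (when $\dim \Delta_\alpha \leq 0$) being trivial. In the inductive step I verify condition (ii) of Proposition~\ref{prop:connectivity-equivalences} for $\Delta_\alpha$: every link of $\Delta_\alpha$ has connected sequential layers in the relevant range. The trivial link (at $F = \varnothing$) equals $\Delta_\alpha$, and the vanishing hypothesis together with a descending induction on $j$ via the long exact sequence gives connectedness of $\Delta_\alpha^{\langle j \rangle}$ for $1 \leq j \leq \dim \Delta_\alpha$. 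For a nonempty face $F = \{z_1 < \cdots < z_k\}$, the standard decomposition
\[
\link_{\Delta_\alpha}(F) = \Delta(x(\alpha), z_1) * \Delta(z_1, z_2) * \cdots * \Delta(z_k, y(\alpha))
\]
expresses the link as a simplicial join of order complexes of open subintervals, each of length strictly smaller than $\ell(\alpha)$; the inductive hypothesis makes each factor SGC, hence (by the $(\Leftarrow)$ direction just proven) each factor has connected sequential layers, and Proposition~\ref{prop:joins}(ii) then transfers connectedness to the join. Proposition~\ref{prop:connectivity-equivalences}(ii)$\Rightarrow$(i) yields $\Delta_\alpha$ SGC, completing the induction; since every open interval of $P$ is represented by some equivalence class (with the order complex depending only on the class by Proposition~\ref{prop:rigidity}(ii)), the SGC condition holds universally. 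The principal technical subtlety is the reduction of quadraticity to $\Tor_2$-concentration in our semisimple-but-not-field setting, which is standard but nontrivial and deserves a short explicit justification.
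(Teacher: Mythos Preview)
Your proof is correct and follows essentially the same route as the paper's: both reduce quadraticity to $\Tor_2$-concentration via \cite[Theorem~2.3.2]{BGS}, then use Corollary~\ref{cor:ring-tor} to translate this into the relative $H_0$ vanishing for all $\Delta_\alpha$, and finally identify that with SGC via Propositions~\ref{prop:connectivity-equivalences} and~\ref{prop:joins} together with the link--join decomposition of order complexes. The only organizational difference is that the paper runs a chain of six biconditionals simultaneously over all intervals (using both parts of Proposition~\ref{prop:joins}), whereas you package the same content as an induction on $\ell(\alpha)$, which lets you get away with only Proposition~\ref{prop:joins}(ii).
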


By Proposition~\ref{prop:generated-in-degree-one},
one only needs to show that Property (b) in Definition \ref{defn:quadraticity} 
is equivalent to $P$ having every interval $[x_0,y_0]$
SGC.  

\begin{proof}
One has the following equivalences for $R=\gr_I A$ with $A=k[P]_\red$,
with justifications given below:
$$
\begin{aligned}
&R \text{ quadratic } \\
&\overset{(1)}{\Longleftrightarrow} 
\Tor_2^R(R_0,R_0)_j=0\text{ for all }j \neq 2 \\
&\overset{(2)}{\Longleftrightarrow} 
\text{For every interval }[x,y]\text{ in }P,\\
& \qquad
\text{ the complex }\Delta:=\Delta(x,y)\text{ has }
\tilde{H}_0(\Delta^{\langle j \rangle},\Delta^{\langle j+1 \rangle};k)=0
\text{ for } 1\leq j \leq \dim\Delta \\
&\overset{(3)}{\Longleftrightarrow} 
\text{For every interval }[x,y]\text{ in }P,\\
& \qquad
\text{ the complex }\Delta:=\Delta(x,y)\\
&\qquad
\text{ has } \Delta^{\langle j \rangle}\text{ connected for } 1\leq j \leq \dim\Delta \\
&\overset{(4)}{\Longleftrightarrow} 
\text{For every interval }[x,y]\text{ in }P,\\
& \qquad
\text{ the link }\Gamma\text{ of every face in the complex }\Delta:=\Delta(x,y)\\
& \qquad 
\text{ has }\Gamma^{\langle j \rangle}\text{ connected for } 1\leq j \leq \dim\Gamma \\
&\overset{(5)}{\Longleftrightarrow} 
\text{For every interval }[x,y]\text{ in }P,\\
& \qquad
\text{ the link }\Gamma\text{ of every face in the complex }\Delta:=\Delta(x,y)
\text{ is SGC}\\
&\overset{(6)}{\Longleftrightarrow} 
\text{For every interval }[x,y]\text{ in }P,\\
& \qquad
\text{ the complex }\Delta:=\Delta(x,y)\text{ is SGC}
\end{aligned}
$$

Equivalence (1), in the presence of Property (a), 
is well-known when $R_0$ is a field, and no harder when $R_0$ is semisimple; 
see \cite[Theorem 2.3.2]{BGS}.

Equivalence (2) follows from Theorem~\ref{thm:tor-computation}.

Equivalence (3) follows from Proposition~\ref{prop:connectivity-equivalences}.

Equivalence (4) follows from Proposition~\ref{prop:joins}(ii) and
the fact that the link of a face indexed by a chain
$$
x=x_0 < x_1 < \cdots < x_i = y
$$
in the complex $\Delta(x,y)$ is the simplicial 
join of the complexes $\Delta(x_{\ell-1},x_\ell)$.

Equivalence (5) again follows from Proposition~\ref{prop:connectivity-equivalences}.

Equivalence (6) follows from Proposition~\ref{prop:joins}(i) and
the same fact about links in $\Delta(x,y)$ that was used for Equivalence (4).
\end{proof}

\section{Subgroup lattices}
\label{sec:subgroups}

We discuss here for comparison another situation
where one has an algebraic condition
characterized in a pure/graded situation by
the Cohen-Macaulayness of some poset,
and more generally in the nonpure/nongraded situation
by sequential Cohen-Macaulayness.
It would be very interesting if these results had a common
generalization with Theorem~\ref{thm:main}.

We first recall two notions from group theory.

\begin{definition}
\rm \ \\
A {\it subnormal series} for a group $G$ is a tower of subgroups
$$
\{1\} = H_0 \triangleleft H_1 \triangleleft \cdots \triangleleft H_t = G.
$$
A {\it solvable} group $G$ is one having
a subnormal series in which each quotient $H_i/H_{i-1}$ is abelian.
A {\it supersolvable} group $G$ is one having
a subnormal series in which each quotient $H_i/H_{i-1}$ is cyclic,
and in addition, each $H_i$ is normal in $G$.
\end{definition}

\begin{theorem}(Bj\"{o}rner \cite[Theorem 3.3]{Bj}) 
A finite group $G$ is supersolvable if and only if its
lattice $L(G)$ of subgroups is Cohen-Macaulay over $k$
for some field $k$.
\end{theorem}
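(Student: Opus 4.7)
The plan is to prove the two implications separately, exploiting the fact that Cohen-Macaulayness forces purity while EL-shellability yields Cohen-Macaulayness over every field.

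For the direction assuming $L(G)$ is Cohen-Macaulay over some field $k$: the first step is to observe that Cohen-Macaulayness of the order complex $\Delta(L(G) \setminus \{\hat 0, \hat 1\})$ implies purity, i.e.\ every maximal chain of proper subgroups of $G$ has the same length. This is the standard fact that a non-pure simplicial complex cannot be Cohen-Macaulay over any field, since the reduced homology of the link of a lowest-dimensional facet in positive codimension would fail to vanish. Once purity is established, one invokes the classical theorem of Iwasawa (1941), which states that a finite group is supersolvable if and only if its subgroup lattice satisfies the Jordan--Dedekind chain condition. This gives the forward implication immediately.

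For the converse direction, assume $G$ is supersolvable, and fix a chief series
\[
\{1\} = G_0 \triangleleft G_1 \triangleleft \cdots \triangleleft G_n = G
\]
with each $G_i$ normal in $G$ and $|G_i/G_{i-1}|$ a prime. The plan is to exhibit an EL-labeling of $L(G)$, from which Cohen-Macaulayness over every field follows by Bj\"orner's general theorem that EL-shellability implies shellability of the order complex. Label a cover relation $H \lessdot K$ in $L(G)$ by
\[
\lambda(H,K) := \min\{\, i : G_i \not\subseteq H \,\} = \min\{\, i : H G_i = K \,\};
\]
the equality of the two expressions, and the fact that this label is well-defined, use the normality of each $G_i$ in $G$ together with the supersolvable structure to guarantee that each cover in $L(G)$ has index a prime (so $K = H G_i$ for exactly one $i$).

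The main obstacle is verifying the EL-axioms for this labeling: one must show that each interval $[H, K]$ in $L(G)$ admits a unique strictly increasing maximal chain, and that this chain is lexicographically smallest among all maximal chains of $[H,K]$. The key lemma is that if $i_1 < i_2 < \cdots < i_r$ are the indices appearing in any maximal chain of $[H,K]$, then these indices are uniquely determined as the set $\{i : G_i H \neq G_{i-1} H \text{ and } G_i H \subseteq K\}$, because normality of each $G_j$ makes the assignment $i \mapsto H G_i$ a well-behaved, weakly increasing map whose jumps are forced. The unique increasing chain is then obtained by successively adjoining the elements $G_{i_j}$ in order, and any chain with labels taken in a different order must, at its first inversion, produce a cover whose label exceeds the corresponding label in the canonical chain. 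Once this is checked, the machinery of EL-shellability delivers Cohen-Macaulayness of $L(G)$ over every field, completing the proof.
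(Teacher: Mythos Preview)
The paper itself gives no proof of this theorem; it is simply quoted from Bj\"orner \cite[Theorem~3.3]{Bj}, so there is nothing in the paper to compare against.  Your overall strategy is exactly Bj\"orner's original one and is correct in outline: Cohen--Macaulayness forces purity of $\Delta(L(G)\setminus\{\hat0,\hat1\})$, hence the Jordan--Dedekind chain condition on $L(G)$, and then Iwasawa's theorem gives supersolvability; conversely, a chief series with prime-order factors yields an EL-labeling, hence shellability, hence Cohen--Macaulayness over every field.

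There is, however, a genuine error in the specific EL-labeling you propose.  Your first expression
\[
\lambda(H,K)=\min\{\,i : G_i\not\subseteq H\,\}
\]
does not depend on $K$ at all, so every cover of a fixed $H$ receives the same label.  Your second expression $\min\{\,i:HG_i=K\,\}$ is not in general defined: take $G=\ZZ/2\times\ZZ/2$ with chief series through $G_1=\ZZ/2\times 0$; for the cover $\{1\}\lessdot\{0\}\times\ZZ/2$ there is no $i$ with $G_i=\{0\}\times\ZZ/2$, so the set is empty.  Thus the two expressions are neither equal nor individually usable.  The labeling Bj\"orner actually uses (equivalently, the standard supersolvable-lattice labeling coming from the $M$-chain $G_0<G_1<\cdots<G_n$) is
\[
\lambda(H,K)=\min\{\,i:K\subseteq HG_i\,\}
\quad\bigl(=\min\{\,i:G_i\cap K\not\subseteq H\,\}\bigr),
\]
and with this correction the verification of the EL-axioms proceeds along the lines you sketch.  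In the $\ZZ/2\times\ZZ/2$ example, your first formula assigns label~$1$ to all three atoms and produces maximal chains with label sequences $(1,2),(1,1),(1,1)$, so the unique rising chain $(1,2)$ is not lexicographically first; the corrected formula gives $(1,2),(2,1),(2,1)$, as required.
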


The following generalization is, essentially, the striking
result of J. Shareshian \cite[Theorem 1.4]{Sha}),
characterizing solvability via the {\it nonpure shellability} \cite{BW} of $L(G)$.
The authors thank Shareshian for discussions
on how to prove the following trivial extension of his result, and
for allowing them to include the proof sketch given here.

\begin{theorem}
A finite group $G$ is solvable if and only if its
lattice $L(G)$ of subgroups is sequentially Cohen-Macaulay over $k$
for some field $k$.
\end{theorem}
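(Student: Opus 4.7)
The argument splits into two implications. For the forward direction, I would simply cite Shareshian's theorem \cite[Theorem 1.4]{Sha} asserting that $L(G)$ is nonpure shellable when $G$ is solvable, and then apply the standard result from \cite{BW} that every nonpure shellable complex is sequentially Cohen-Macaulay over every field. This gives the sequentially Cohen-Macaulay property of $L(G)$ over all $k$ at once.

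For the converse I would argue by contrapositive: assuming $G$ is not solvable, I produce a failure of sequential Cohen-Macaulayness in $L(G)$. Non-solvability of $G$ supplies subgroups $N \triangleleft H \leq G$ with $S := H/N$ a nonabelian finite simple group. The correspondence theorem identifies the closed interval $[N,H]$ in $L(G)$ with $L(S)$, so the open interval $(N,H)$ in $\widehat{L(G)}$ is poset-isomorphic to the proper part $L(S) \setminus \{\{1\},S\}$. By Proposition~\ref{prop:BWW-poset-criterion}, $L(G)$ is sequentially Cohen-Macaulay over $k$ if and only if every open interval of $\widehat{L(G)}$ is sequentially acyclic over $k$, so it suffices to exhibit, for each nonabelian finite simple group $S$, some sequential layer of the order complex $\Delta(L(S) \setminus \{\{1\},S\})$ whose reduced homology is nontrivial in a dimension strictly below the layer's dimension parameter.

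This last assertion is essentially what Shareshian establishes in \cite[Theorem 1.4]{Sha}. His analysis appeals to the classification of finite simple groups to reduce to a list of specific nonabelian simple groups, in each of which he exhibits concrete topological defects in the proper part of $L(S)$. The trivial sharpening needed here is the observation that, although Shareshian's defects are framed as obstructions to nonpure shellability (the stronger of the two properties), the dimension data attached to them in fact obstruct sequential acyclicity as well, via the reformulation in Proposition~\ref{prop:new-seq-C-M-defn}. Thus the main, and only, obstacle in this direction is to extract from Shareshian's case-by-case analysis enough numerical information about which relative homology groups $\tilde{H}_i(\Delta^{\langle j \rangle}, \Delta^{\langle j+1 \rangle}; k)$ survive to certify failure of sequential acyclicity rather than merely of shellability -- a delicate but essentially routine re-examination of \cite{Sha}, requiring no new group-theoretic input.
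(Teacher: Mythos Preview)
Your forward direction and overall contrapositive strategy for the converse match the paper's approach. However, there is a genuine gap in your assessment of the converse.

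First, a minor correction: Shareshian's case analysis runs over the \emph{minimal simple groups} (nonabelian simple groups all of whose proper subgroups are solvable), using Thompson's classification of these, not the full classification of finite simple groups. Your reduction should therefore pass from a nonsolvable $G$ to a minimal simple subquotient, not merely to an arbitrary nonabelian simple one; otherwise Shareshian's case list does not cover what you need.

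More seriously, your claim that extracting sequential-acyclicity obstructions from \cite{Sha} is ``essentially routine'' and ``requires no new group-theoretic input'' is not correct. It is true that most of Shareshian's case arguments are of a topological nature and do preclude sequential Cohen--Macaulayness directly. But for the minimal simple group $G=SL_3(\FF_3)$, Shareshian's argument is specific to the combinatorics of a nonpure shelling and does \emph{not} yield a homological obstruction of the form $\tilde{H}_i(\Delta^{\langle j \rangle},\Delta^{\langle j+1 \rangle};k)\neq 0$ with $i<j$. The paper closes this gap by a separate {\tt GAP} computation: for the order complex $\Delta$ of the proper part of $L(SL_3(\FF_3))$ (which is $6$-dimensional on $6372$ vertices), the pure $6$-skeleton $\Delta^{[6]}$ is shown to have negative entries in its $h$-vector (equivalently, $\tilde{H}_3(\Delta^{[6]};\ZZ)\cong\ZZ^{144}$), so $\Delta^{[6]}$ is not Cohen--Macaulay over any field, and hence by Duval's theorem $\Delta$ is not sequentially Cohen--Macaulay over any field. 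This computational step is not extractable from \cite{Sha} and constitutes the ``trivial sharpening'' alluded to in the paper; your proposal does not account for it.
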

\begin{proof}
(Sketch)
For both implications, most of the work occurs
already in the proof of \cite[Theorem 1.4]{Sha}.

\vskip .1in
\noindent
($\Rightarrow$:)
For a finite solvable group $G$, the proof of \cite[Theorem 1.4]{Sha}
showed that $L(G)$ is nonpure shellable, a combinatorial
condition introduced by Bj\"orner and Wachs \cite{BW}, known
to imply sequential Cohen-Macaulayness over any field $k$.  

\vskip .1in
\noindent
($\Leftarrow$:)
The proof of \cite[Theorem 1.4]{Sha} showed that for $G$ not solvable, 
the lattice $L(G)$ cannot be nonpure shellable, using the following
plan, that we adapt here to show it cannot be sequentially Cohen-Macaulay 
over any field $k$.

One first notes that for subgroups $N \triangleleft H < G$,
the subgroup lattice $L(H/N)$ is isomorphic to the interval $[N,H]$ in $L(G)$.
As sequential Cohen-Macaulayness is a property inherited by intervals in a poset
(see Proposition~\ref{prop:BWW-poset-criterion} above), if $L(G)$ were
sequentially Cohen-Macaulay over $k$, the same would be true for $L(H/N)$.
This reduces one to checking that $L(G)$ is not sequentially Cohen-Macaulay when $G$
is a {\it minimal simple group}, that is, one which is nonabelian
and simple, but all of whose proper subgroups are solvable.

The proof of \cite[Theorem 1.4]{Sha}
used the classification of minimal simple groups $G$,
and checked case-by-case that none can have $L(G)$ nonpure shellable,
usually by an argument of a topological nature that
also precludes sequential Cohen-Macaulayness
over any field $k$.  

The only case that presents a problem is the minimal simple group 
$G=SL_3(\FF_3)$, where the argument given in
the proof \cite[Theorem 1.4]{Sha} is of a different nature,
more specific to contradicting the existence of a nonpure shelling.  Instead, for
this case $G=SL_3(\FF_3)$, we used the software package {\tt GAP} \cite{GAP} 
to check that the following numerology of $L(G)$ prevents it from being
sequentially Cohen-Macaulay over any field $k$.

Let $\Delta$ denote the order complex of the open interval 
$(\{1\},G)$ in $L(G)$ for $G=SL_3(\FF_3)$;  it is easily seen 
from the definitions that $L(G)$ is sequentially Cohen-Macaulay over $k$ if and only
if the same holds for $\Delta$.  Then
$\Delta$ has $6372$ vertices and is $6$-dimensional. 
If $\Delta$ were sequentially Cohen-Macaulay over $k$,
then by a theorem of Duval \cite[Theorem 3.3]{Duv}, 
its pure $6$-dimensional skeleton $\Delta^{[6]}$ (= the
subcomplex generated by all faces of dimension $6$)
would be Cohen-Macaulay over $k$. 
However, computation in {\tt GAP} shows that 
$\Delta^{[6]}$ has $f$-vector and $h$-vector (see \cite[Chapter 5]{BH}, 
\cite[\S II.1,2]{St})
$$
\begin{array}{ccccccccc}
f&=(f_{-1},&f_0,&f_1,&f_2,&f_3,&f_4,&f_5,& f_6) \\
 &=(1, &2561,& 49686, &228904, &456846, &472368, &252044, &54600) \\
 &     &     &        &        &        &        &        & \\
h&=(h_0,&h_1,&h_2,&h_3,&h_4,&h_5,&h_6,&h_7)\\
 &=(1, &2554,& 34341, &18854,  &-13095, &16788,  &-4699,  &-144).
\end{array}
$$ 
Negative entries in the $h$-vector imply that $\Delta^{[6]}$ 
is not Cohen-Macaulay (see \cite[Theorem 5.1.10]{BH}, \cite[Theorem II.3.3]{St}) 
over any field $k$.  Alternatively, computation in {\tt GAP} shows that 
$
\widetilde{H}_i(\Delta^{[6]};\mathbb{Z})=\ZZ^{144}
$
for $i=3$ and vanishes for all other $i$.
Reisner's criterion (\cite[Corollary 5.3.9]{BH}, \cite[Corollary II.4.2]{St}) 
then implies that $\Delta^{[6]}$ is not Cohen-Macaulay over any field $k$.
\end{proof}


\begin{thebibliography}{00}

 
\bibitem{BGS} A.~Beilinson, V.~Ginzburg, W.~Soergel, 
{Koszul duality patterns in representation theory},  
J. Amer. Math. Soc.  9  (2) (1996) 473--527. 

\bibitem{Bj} A.~Bj\"orner, 
{Shellable and Cohen-Macaulay partially ordered sets},
Trans. Amer. Math. Soc. 260 (1) (1980) 159--183.


\bibitem{Bj-top}
A.~Bj\"orner, 
{Topological methods}, in Handbook of combinatorics, Vol. 1, 2, 
Elsevier, Amsterdam, 1995, pp. 1819--1872. 

\bibitem{BLSWZ}
A. Bj\"orner, M.~L.~Las Vergnas, B.~Sturmfels, N.~White, and G.~Ziegler,
{Oriented matroids, 2nd edition},
Encyclopedia of Mathematics and its Applications, vol. 46,
Cambridge University Press, Cambridge, 1999.


\bibitem{BW} A.~Bj\"orner, M.L. Wachs, 
{Shellable nonpure complexes and posets. I},
Trans. Amer. Math. Soc. 348 (4) (1996) 1299--1327.

\bibitem{BWW} A.~Bj\"orner, M.~Wachs, V.~Welker, 
{On Sequentially Cohen-Macaulay Complexes and Posets}, 
Israel J. Math. 169 (2009) 295?-316.
 

\bibitem{BH}  W.~Bruns, J.~Herzog, 
{Cohen-Macaulay Rings}, 
Cambridge Stud. Adv. Math., vol. 39, 
Cambridge University Press, Cambridge, 1993.


\bibitem{BGMW} L.~Budach, B.~Graw, C.~Meinel, S.~Waack, 
{Algebraic and Topological Properties of Finite Partially Ordered Sets}, 
Teubner-Texte zur Mathematik 109, Teubner-Verlag, 1988.

\bibitem{Cib}  C.~Cibils, 
{Cohomology of incidence algebras and simplicial complexes}, 
J. Pure Appl. Algebra 56 (1989) 221--232.

\bibitem{DRS} P.~Doubilet, G.-C.~Rota, R.~Stanley, 
{On the foundation of combinatorial theory (VI). The idea of generating function}, 
in: Sixth Berkeley Symp. on Math. Stat. and Prob., vol. 2: Probability Theory, 
Univ. of California, 1972, pp. 267--318.

\bibitem{Duv} A.~M.~Duval, 
{Algebraic shifting and sequentially Cohen-Macaulay simplicial complexes}, 
Electron. J. Combin. 3 (1) (1996), Research Paper 21.

\bibitem{ER} J.~Eagon, V.~Reiner, 
{Resolutions of Stanley-Reisner rings and Alexander duality},  
J. Pure Appl. Algebra  130  (3) (1998) 265--275.

\bibitem{Fr1} R. Fr\"oberg, 
{Determination of a class of Poincar\'e series},
Math. Scand. 37 (1) (1975) 29--39.

\bibitem{Fr2} R. Fr\"oberg, 
{Connections between a local ring and its associated graded ring},
J. Algebra 111 (2) (1987) 300--305.

\bibitem{GAP} The GAP Group, 
GAP---Groups, Algorithms, and Programming, version 4.4.3, 2004, {http://www.gap-system.org}.

\bibitem{HH} J.~Herzog, T.~Hibi, 
{Componentwise linear ideals}, 
Nagoya Math. J. 153 (1999) 141--153.

\bibitem{HRW} J.~Herzog, V.~Reiner, V.~Welker, 
{Componentwise linear ideals and Golod rings}, 
Michigan Math. J.  46  (2) (1999) 211--223.

\bibitem{HRW2} J.~Herzog, V.~Reiner, V.~Welker, 
{The Koszul property in affine semigroup rings}, 
Pacific J. Math., 186 (1998), 39--65.

\bibitem{IR} S.~Iyengar, T.~R\"{o}mer,
{Linearity defects of modules over commutative rings},
J. Algebra 322 (9) (2009) 3212--3237.

\bibitem{LS} O.~A.~Laudal, A.~Sletsj{\o}e, 
{Betti numbers of monoid algebras. Applications to 2-dimensional torus embeddings}, 
Math. Scand. 56 (1985) 145--162.
 
\bibitem{Mac} S.~Mac Lane, 
{Homology},  
Classics Math., Springer-Verlag, Berlin 1995. 

\bibitem{PRS} I.~Peeva, V.~Reiner, B.~Sturmfels, 
{How to shell a monoid},  
Math. Ann.  310  (2) (1998) 379--393.

\bibitem{Pol} P.~Polo, 
{On Cohen-Macaulay posets, 
Koszul algebras and certain modules associated to Schubert varieties}, 
Bull. London Math. Soc.  27 (5) (1995) 425--434. 

\bibitem{Pri} S.B.~Priddy,
{Koszul resolutions},
Trans. Amer. Math. Soc. 152 (1970) 39--60. 

\bibitem{Rom} T.~R\"{o}mer,
{On minimal graded free resolutions},
Dissertation, Universit\"{a}t Duisburg-Essen, 2001.

\bibitem{Sha} J.~Shareshian, 
{On the shellability of the order complex of the subgroup lattice of a finite group}, 
Trans. Amer. Math. Soc.  353  (2001) 2689--2703.   
 
\bibitem{St} R.~P.~Stanley, 
{Combinatorics and Commutative Algebra}, 
Second edition, Birkh\"auser Boston, 1996.

\bibitem{Wa} M.~Wachs, 
{Whitney homology of semipure shellable posets}, 
J. Algebraic Combin.  9  (2) (1999) 173--207.

\bibitem{Woc} D.~Woodcock, 
{Cohen-Macaulay complexes and Koszul rings}, 
J. London Math. Soc. (2)  57 (2) (1998) 398--410.

\bibitem{Yan} K.~Yanagawa,
{Alexander duality for Stanley-Reisner rings and squarefree 
$\mathbf{N}\sp n$-graded modules},
J. Algebra 225 (2) (2000) 630--645. 
 
\end{thebibliography}
\end{document}